\let\csname ver@amsthm.sty\endcsname\relax
	\let\theoremstyle\relax
	\let\qedhere\relax
	\let\openbox\relax
\definecolor{ItalianApricot}{rgb}{1,0.7,0.5}
\theoremstyle{plain}
\newtheorem{thm}{Theorem}[section]
\newtheorem{theorem}[thm]{Theorem}
\newtheorem{prop}[thm]{Proposition}
\newtheorem{proposition}[thm]{Proposition}
\newtheorem{lem}[thm]{Lemma}
\newtheorem{lemma}[thm]{Lemma}
\newtheorem{cor}[thm]{Corollary}
\newtheorem{fact}[thm]{Fact}
\theoremstyle{definition}
\newtheorem{definition}[thm]{Definition}
\newtheorem{question}[thm]{Question}
\theoremstyle{remark}
\newtheorem{remark}[thm]{Remark}
\newtheorem{claim}{Claim}[thm]
\newtheorem{conjecture}[thm]{Conjecture}
\numberwithin{equation}{section}
\theoremstyle{plain}
\newcommand{\thistheoremname}{}
\newtheorem*{genericthm}{\thistheoremname}
\newenvironment{namedthm}[1]
	{\renewcommand{\thistheoremname}{#1}%
	\begin{genericthm}}
 	{\end{genericthm}}
\newcommand{\NumberQED}[1]{
\renewcommand\qedsymbol{\ensuremath{\openbox}$_{\ref{#1}}$}	
}
\renewcommand{\epsilon}{\varepsilon}
\renewcommand{\phi}{\varphi}
\newcommand{\seq}[1]{{\left\langle{#1}\right\rangle}}
\newcommand\+[1]{\mathcal{#1}}
\DeclareMathOperator{\dom}{dom}
\newcommand{\floor}[1]{\left\lfloor{#1}\right\rfloor}
\newcommand{\tth}{{}^{\textup{th}}}
\newcommand{\conc}{\hat{\,\,}}
\newcommand{\andd}{\,\,\,\&\,\,\,}
\newcommand{\converge}{\!\!\downarrow}
\renewcommand{\setminus}{\smallsetminus}
\newcommand{\w}{\omega}
\newcommand{\s}{\sigma}
\renewcommand{\le}{\leqslant}
\renewcommand{\ge}{\geqslant}
\renewcommand{\leq}{\leqslant}
\renewcommand{\geq}{\geqslant}
\renewcommand{\preceq}{\preccurlyeq}
\renewcommand{\succeq}{\succcurlyeq}
\newcommand{\nle}{\nleqslant}
\newcommand{\Tur}{\textup{\scriptsize T}}
\newcommand{\leT}{\le_{\Tur}}
\newcommand{\geT}{\ge_{\Tur}}
\newcommand{\limcost}{\underline{\cost}}
\newcommand{\then}{\rightarrow}
\newcommand{\Nat}{\mathbb{N}}
\newcommand \bool[1]{\left\ldbrack #1 \right\rdbrack}
\newcommand{\leb}{\mu}
\DeclareMathOperator{\upto}{\upharpoonright}
\newcommand{\uhr}[1]{\upto{#1}}
\newcommand{\rest}[1]{\upto{#1}}
\newcommand{\cost}{\mathbf{c}}
\newcommand{\dost}{\mathbf{d}}
\newcommand{\DII}{\Delta^0_2}
\newcommand{\NN}{{\mathbb{N}}}
\newcommand{\RR}{{\mathbb{R}}}
\newcommand{\QQ}{{\mathbb{Q}}}
\newcommand{\sub}{\subseteq}
\newcommand{\ML}{\textup{\scriptsize{ML}}}
\newcommand{\SI}[1]{\Sigma^0_{#1}}
\newcommand{\Halt}{{\ES'}}
\newcommand{\ES}{\emptyset}
\newcommand{\ria}{\rightarrow}
\newcommand{\tp}[1]{2^{#1}}
\newcommand{\fa}{\forall}
\newcommand{\lwtt}{\le_{\mathrm{wtt}}}
\newcommand{\sss}{\sigma}
\newcommand{\wt}{\widetilde}
\newcommand{\lra}{\leftrightarrow}
\newcommand{\LR}{\Leftrightarrow}
\newcommand{\RA}{\Rightarrow}
\newcommand{\LA}{\Leftarrow}
\definecolor{lightred}{rgb}{1,.60,.60}
\begin{document}



 \title{Martin-L\"of reducibility and cost functions}

\date{\today}

\author{Noam Greenberg}
\address{School of Mathematics and Statistics\\
Victoria University of Wellington\\
Wellington, New Zealand}
\email{greenberg@msor.vuw.az.nz}

\author{Joseph S.~Miller}
\address{Department of Mathematics\\
University of Wisconsin\\
Madison, WI 53706, USA}
\email{jmiller@math.wisc.edu}

\author{Andr\'e Nies}
\address{Department of Computer Science\\
University of Auckland\\
Private Bag 92019\\
Auckland, New Zealand}
\email{andre@cs.auckland.ac.nz}

\author{Dan Turetsky}
\address{School of Mathematics and Statistics\\
Victoria University of Wellington\\
Wellington, New Zealand}
\email{dan@msor.vuw.ac.nz}
 
\thanks{Greenberg and Nies were supported by the Marsden Fund of New Zealand. Greenberg was also supported by a Rutherford Discovery Fellowship from the Royal Society of New Zealand. This research was started during a retreat at the Research Centre Coromandel.}

\subjclass[2010]{Primary 03D32; Secondary 03D30, 68Q30}

%

\begin{abstract} 
Martin-L\"of (ML)-reducibility compares the complexity of $K$-trivial sets of natural numbers by examining the Martin-L\"of random sequences that compute them. One says that a $K$-trivial set $A$ is ML-reducible to a $K$-trivial set $B$ if every ML-random computing $B$ also computes $A$. We show that every $K$-trivial set is computable from a c.e.\ set of the same ML-degree. We investigate the interplay between ML-reducibility and cost functions, which are used to both measure the number of changes in a computable approximation, and the type of null sets intended to capture ML-random sequences. We show that for every cost function there is a c.e.\ set that is ML-complete among the sets obeying it. We characterise the $K$-trivial sets computable from a fragment of the left-c.e.\ random real~$\Omega$ given by a computable set of bit positions. This leads to a new characterisation of strong jump-traceability.
\end{abstract}

\maketitle
\tableofcontents

\section{Introduction}
Martin-L\"of (ML) randomness and $K$-triviality are antipodal properties of sets of natural numbers. Nonetheless, sets of the two kinds interact in interesting ways via Turing reducibility. For instance, combining the results of~\cite{Bienvenu.Greenberg.ea:16,DayMiller:Covering}, a c.e.\ set is $K$-trivial if and only if it is computable from a Turing incomplete ML-random set; see~\cite{CoveringProblemAnnouncement}. 

Our purpose is to study the relative complexity of the $K$-trivial sets via a preordering coarser than Turing reducibility 
that is given by this interaction: $A \le_\ML B$ if every ML-random computing $B$ also computes $A$. This preordering, called ML-reducibility, was introduced in~\cite{Bienvenu.Greenberg.ea:16} by Bienvenu, Ku\v cera, and three of the authors of the present paper. They showed that there is an ML-complete $K$-trivial (which they called ``smart''). While the $K$-trivials appear somewhat amorphous under Turing reducibility, we will show that an interesting structure emerges when they are viewed through the lens of this preordering. Research in this direction was carried out first by three of the authors of the present paper in~\cite{SubclassesPaper}. They described a dense linear hierarchy of natural principal ideals in the ML-degrees of the $K$-trivials. Our first result, Theorem~\ref{thm:ML_degrees_ce_generated}, shows that each $K$-trivial is ML-equivalent to a c.e.\ $K$-trivial, so the structure we find on the $K$-trivials is fully given by c.e.\ witnesses. 

\smallskip

\noindent\emph{Some background on ML-randomness and $K$-triviality.} Sets of natural numbers (often simply referred to as \emph{sets}) will be identified with infinite bit sequences. ML-randomness is central among the notions of randomness given by algorithmic tests. A set $Z\subseteq \omega$ is ML-random (sometimes just called ``random'' in this paper) if $Z \not \in \bigcap_m G_m$ for any sequence of uniformly $\Sigma_1^0$ sets in Cantor space such that the Lebesgue measure of $G_m$ is at most $2^{-m}$. A sequence $\seq{G_m}$ of this kind is called an ML-test.

Chaitin's $\Omega$, the halting probability of a universal prefix-free machine, is an example of an ML-random sequence. Note that $\Omega$ is Turing equivalent to the halting problem. There are various ways to build a low ML-random sequence. One way is to use the low basis theorem together with the fact that there is a universal ML-test. Another is to take $\Omega_R$, the bits of $\Omega$ with location in an infinite, co-infinite computable set $R$; to see that $\Omega_R$ is low one can e.g.\ combine \cite[Prop.\ 3.4.10]{Nies:book}, due to \cite{Nies.Stephan.ea:05},  with van Lambalgen's Theorem~\cite{vLamb:90}. 

Let $K(x) $ denote the prefix-free descriptive complexity of a string $x$. One says that a set $A$ 
is $K$-trivial if there is a constant $b$ such that $K(A\uhr n) \le K(n) + b$ for each $n$. (Here the ``$n$'' in $K(n)$ is interpreted as a string, for instance the string obtained by writing~$n$ in binary.) Note that $K(n)$ is, up to a constant not depending on $n$, the lowest complexity possible for a string of length $n$. So $K$-trivial sets have minimal initial segment complexity, again up to a constant. The Levin--Schnorr theorem states that a sequence $Z$ is ML-random iff $K(Z\uhr n) \ge n-d$ for some $d$ only depending on~$Z$. As $K(n) \le 2\log n+O(1)$, the notion of $K$-triviality is indeed antipodal to ML-randomness: $K$-trivial by definition means far from random. 

Eighteen or so characterisations of the $K$-trivials are known presently, most of them saying that the set is in some sense close to computable. For instance, $A$ is $K$-trivial iff $A$ is low for ML-randomness, in the sense that each ML-random is ML-random relative to $A$~\cite{Nies:AM}. For more recent ones, $A$ is $K$-trivial iff for each ML-random $Y$, the symmetric difference $Y \triangle A$ is ML-random~\cite{KuyperMiller}; $A$ is $K$-trivial iff for all $Y$ such that $\Omega$ is $Y$-random, $\Omega$ is $Y \oplus A$-random~\cite{TwoMoreCharacterisations}.

Despite these characterisations, and the detailed knowledge of the class of $K$-trivials they appeared to convey, paradoxically, not much progress had been made on the internal structure of the class since the early papers, such as \cite{HirschfeldtNiesStephan:UsingRandomSetsAsOracles,Nies:AM}. It was known that the $K$-trivials are downward closed under Turing reducibility, and that they determine an ideal in the Turing degrees that is contained in the superlow degrees, generated by its c.e.\ members, and has no greatest degree (i.e., it is nonprincipal). In hindsight it appears that Turing reducibility was the wrong preordering to analyse the internal structure. We will use the {coarser} ML-reducibility to amend this lack of information. 
\smallskip
%

We briefly review some results that led to the formulation of ML-reducibility. The Ku\v cera--Gacs Theorem \cite{Gacs:EverySequence,Kucera:85} states that every set $A$ is Turing below some ML-random set $Z$. If $A$ is $\Delta^0_2$ then one can take $\Omega$ as $Z$. 
What can a Turing incomplete ML-random set compute? Ku\v cera~\cite{Kucera:86} showed that   each $\Delta^0_2$ ML-random~$Z$ is Turing above a noncomputable c.e.\ set. Hirschfeldt, Nies, and Stephan~\cite{HirschfeldtNiesStephan:UsingRandomSetsAsOracles} proved that if $Z$ is Turing  incomplete, then $Z$ is necessarily ML-random relative to any such c.e.\ set $A$ it computes. By definition, this means that $A$ is a \emph{basis for ML-randomness}, which implies that it is $K$-trivial~\cite{HirschfeldtNiesStephan:UsingRandomSetsAsOracles}. Since being $K$-trivial also means close to computable~\cite{Nies:AM}, this shows that an incomplete ML-random can only compute c.e.\ sets that are close to computable. In the other direction, the works~\cite{CoveringProblemAnnouncement,Bienvenu.Greenberg.ea:16,DayMiller:Covering} mentioned in the first paragraph show that there is, in fact, a {single} incomplete $\DII$ ML-random Turing above all the $K$-trivials.

 \smallskip
 
%

 \smallskip
 
 \noindent \emph{Complexity classes of $K$-trivials.} 
 The $K$-trivials are closed downward under $\le_\ML$ by Day and Miller~\cite{DayMiller:Covering}. By an \emph{ML-complexity class} in the $K$-trivials we mean a subclass that determines an ML-ideal, namely, it is closed downward under $\le_\ML$, and closed under the join operator $\oplus$. Several works from 2012 on can be viewed as studies of such classes. For example, a set is strongly jump-traceable (see the beginning of Section~\ref{sec:SJT} for the  definition) if and only if it is computable from all ML-random sequences that are $\omega$-c.a., that is, weak truth table below the halting problem~\cite{GreenbergHirschfeldtNies:2012}. This implies that the strongly jump-traceables form an ML-ideal.
 
 Given an ML-complexity class $\mathcal C$ of $K$-trivials, one can ask the following two general questions:
 
\begin{enumerate} 
\item[(a)] Can one describe the class only referring to properties of its members, rather than to randoms that compute them? 
\item[(b)] Is some set ML-complete for the class? In other words, is the ML-ideal principal?
\end{enumerate}

As an example of a description in (a) consider the original definition~\cite{FigueiraNiesStephan:SJT} of strong jump traceability of a set $A$, which (as the name indicates) refers to a way to tightly approximate the values of $J^A$, the Turing jump function of $A$.

A general way to formulate a condition of lowness among the $K$-trivials is by restricting the changes of computable approximations. Recall that each $K$-trivial~$A$ is $\Delta^0_2$ and hence has a computable approximation $\seq{A_s}$. A \emph{cost function} is a computable function \begin{center} $ \cost\colon \NN \times \NN \ria \{r \in \RR \,:\, r \ge 0\}$, \end{center} which is typically chosen to be nondecreasing in $s$, nonincreasing in $x$, and to satisfy the \emph{limit condition}, namely,   the asymptotic cost $\cost(x)=\sup_s \cost(x,s)$ approaches $0$ as $x$ increases.
The idea is that at stage $s$, the least $x$ such that $A_s(x)$ changes incurs the cost $\cost(x,s)$, which subsumes the cost of changes at larger numbers at the same stage; a set $A$ \emph{obeys} a cost function~$\cost$ if it has a computable approximation that is sufficiently ``inert'' in that it incurs a finite total cost. (This means more than that there are few changes; it also means that changes need to be carried out in ``blocks'', which saves costs because only the change at the least number is counted.) It is a basic fact that each cost function that satisfies the limit condition is obeyed by a noncomputable c.e.\ set. Building on previous results, Nies~\cite[Thm.\ 4.3]{Nies:CalculusOfCostFunctions} showed that obedience to the cost function $\cost_\Omega(x,s)= \Omega_s-\Omega_x$ characterizes the $K$-trivials. 

We will show that an affirmative answer to (a) via obedience to a cost function implies an affirmative answer to (b). Given a cost function $\cost$ that is at least as strong as $\cost_\Omega$, we show as an easy corollary to our second main result, Theorem~\ref{thm:smarts_exist}, that {\it some c.e.\ set $A$ obeys $\cost$ and is ML-complete among the sets obeying $\cost$.} So if a cost function $\cost$ describes a complexity class, then that class has an ML-complete member. In particular, this holds for the class of all $K$-trivials, which therefore determines a principal ML-ideal, a result that was first obtained in~\cite{Bienvenu.Greenberg.ea:16} using similar methods. (At present this appears to be essentially the only known way to show a complexity class in the $K$-trivials has an ML-complete member.) Given a low c.e.\ set $B$, some c.e.\ set $A \not \leT B$ obeys $\cost$ by~\cite[Thm.\ 5.3.22]{Nies:book}. So the sets obeying $\cost$ (being $K$-trivial, and hence low) never form a Turing principal ideal. This lends support to our thesis that, compared to Turing reducibility,  the coarser ML-reducibility leads to a more satisfying complexity theory of the $K$-trivials. 

The class of half-bases is a further example of a complexity class that can be described by a cost function. It yields one level of the dense hierarchy of ML-ideals described in~\cite{SubclassesPaper} which we alluded to earlier on. One says that a set $A$ is a \emph{half-base} if there is an ML-random $Y$ such that $A\leT Y_0,Y_1$, where $Y_0$ consists of the bits in the even, and $Y_1$ of the bits in the odd positions. Note that each half-base is a basis for ML-randomness by van Lambalgen's theorem, and hence $K$-trivial by~\cite{HirschfeldtNiesStephan:UsingRandomSetsAsOracles}. By~\cite[Thm.\ 1.1.]{SubclassesPaper}, one can require that $Y= \Omega$; furthermore, by~\cite[Thm.\ 1.3.]{SubclassesPaper} the class of half-bases can be described by the cost function $\cost_{\Omega, 1/2}(x,s)= \sqrt{\cost_\Omega(x,s)}$. Larger cost functions are harder to obey, in a sense made precise in~\cite[Thm.\ 3.4]{Nies:CalculusOfCostFunctions}. So $\cost_{\Omega, 1/2}$ describes a proper subclass of the $K$-trivials. The properness of the  inclusion of the class of half bases in the $K$-trivials   is a result first obtained in~\cite[Thm.\ 1.3]{Bienvenu.Greenberg.ea:16}. By our method, there is an ML-complete half-base. 

%
%
 
 By definition, a set $A$ is ML-complete for a class $\mathcal C$ of $K$-trivials if it has the least class of ML-randoms computing it among the members of~$\mathcal C$. 
 We build a c.e.\ set $A$ that is ML-complete for the sets obeying $\cost$ by showing that ML-randoms computing $A$ cannot be very random, in the sense that they fail a generalised  type of ML test called a \emph{$\cost$-test}, where the convergence of the measure of $G_m$ to $0$ is bounded by $O(\cost(m))$, rather than $2^{-m}$ (recall here that $\cost(m)= \sup_s \cost(m,s)$). We then use the important, if easy Proposition~\ref{prop:basic fact} below: any set $B$ obeying $\cost$ is Turing below any ML-random failing such a test. 
 
 Our Theorem~\ref{thm:smarts_exist} extends the basic fact that every cost function $\cost$ is obeyed by a noncomputable c.e.\ set. The c.e.\ set $A$ we build obeys $\cost$, but ``only just'', in the sense that the collection of ML-randoms computing $A$ is as small as possible. We call such a set $A$ \emph{smart for $\cost$}, continuing the terminology of \cite{Bienvenu.Greenberg.ea:16} for $\cost_\Omega$; the theorem states that each cost function has a smart set. Then we verify, as an easy consequence of this existence of a smart set, that smartness for $\cost$ coincides with ML-completeness for $\cost$.

 The previous works~\cite{Bienvenu.Greenberg.ea:16,SubclassesPaper} and, in particular, the present paper show that far from being an obstacle, the fact that $K$-trivials are close to computable can be advantageous for the study of their relative computational complexity. Tools can be applied that would not work for computationally more complex~$\Delta^0_2$ sets. The main idea described above is to differentiate between $K$-trivials via the incomplete randoms that compute them. In contrast, a c.e.\ set that is not $K$-trivial only allows randoms above the halting problem to compute it by~\cite{HirschfeldtNiesStephan:UsingRandomSetsAsOracles}; such c.e.\ sets all have the same ML-degree.
 
\smallskip
\noindent \emph{The $K$-trivials Turing below a fragment $\Omega_R$ of $\Omega$.} Bit sequences derived in some way from Chaitin's $\Omega$ often play a special role in the algorithmic theory of randomness. For an infinite computable set $R$, we have defined above the sequences $\Omega_R$ of bits of $\Omega$ with a location in $R$. \cref{sec:comparability_of_fragments} introduces a cost function $\cost_{\Omega,R}$ that describes the class of $K$-trivials computable from $\Omega_R$. As a main result of this paper, we show in \cref{thm:criterion} that this cost function essentially only depends on the function $n \to |R \cap n|$ that gives the number of elements of $R$ less than $n$, taken up to an additive constant. For instance, if $R$ is the set of even numbers and $S$ the odd numbers, then the cost functions corresponding to $R$ and $S$ are equivalent as far as obedience goes, and hence the $K$-trivials below $\Omega_R$ coincide with the $K$-trivials below $\Omega_S$. 
This can be extended to $k/n$ bases, for $1\le k< n$, in the sense of~\cite{SubclassesPaper}, where one takes as $R$ any union of $k$ sets of the form $n \NN + r$, $0 \le r < n$. Let $B_{k/n}$ be a smart set for the corresponding cost function. Via the results in~\cite{SubclassesPaper} these sets determine a chain in the ML-degrees of $K$-trivials that is isomorphic to $(0,1)_\QQ$. For detail see the discussion around~\cref{thm:k_n-bases}. 

As an application of \cref{thm:criterion}, in \cref{thm:Omega_R_is_feeble} we show that the $K$-trivials computable from $\Omega_{R}$ are exactly those that obey $\cost_{\Omega, R}$, as promised. 
Given a cost function $\cost$, an ML-random set $Y$ failing a $\cost$-test will be called \emph{feeble for $\cost$} if the only $K$-trivials it computes are the ones that obey $\cost$. This notion is dual to smartness for  $K$-trivials. In this language, we show that $\Omega_R$ is feeble for $\cost_{\Omega,R}$. 


\smallskip
\noindent \emph{Structure of the $\le_\ML$-degrees of $K$-trivials.} By its definition, ML-reducibility is a weakening of Turing reducibility. The least degree consists of the computable sets. The usual join operation $\oplus$ induces a least upper bound in the ML-degrees.

Dual to Theorem~\ref{thm:smarts_exist}, we will show in Proposition~\ref{prop:A_is_smart_for_c_A} that each $K$-trivial $A$ is smart for some cost function $\cost_{(A)}$ that can be obtained uniformly from $A$. This yields further degree theoretical information in~\cref{rem: ML structure}. Firstly,  there are no minimal pairs in the ML-degrees of $K$-trivials. Secondly,   no ML-degree of a noncomputable $K$-trivial contains a maximal  Turing degree; in particular, it  contains an infinitely ascending chain of  Turing degrees. We do not know whether   the   ML-degree of a noncomputable $K$-trivial can contain a minimal Turing degree. 

As a further, more powerful application of \cref{thm:criterion} we will show in \cref{thm:infinite_antichain} that every countable partial ordering is embeddable into the ML-degrees of $K$-trivial sets. Also, based on a method of Ku\v cera~\cite{Kucera:86}, we obtain a pair of incomparable degrees below each non-zero $K$-trivial ML-degree. In fact, we show that for each noncomputable c.e.\ $K$-trivial $D$,  there are c.e.\ sets $A,B\leT D$ such that $A \mid_\ML B$. 

Only basic facts are known  on ML-reducibility outside the $K$-trivials. Since each Turing degree above $\Halt$ contains a ML-random, for sets above $\Halt$ the two reducibilities coincide. More generally, for PA-complete sets $A,B$, we have \begin{center} $A \le_{ML} B \LR A\oplus \Halt  \le_T B \oplus \Halt$. \end{center}  (We thank  the referee for pointing this out.) For the direction from left  to right, one uses the result of Stephan~\cite{Stephan:06} that the only PA-complete ML-randoms are the ones above $\Halt$.

In the final section, Section~\ref{sec:SJT}, we connect obedience of the cost functions $\cost_{\Omega,R}$ to strong jump traceability, the aforementioned very strong  lowness notion. While the original definition~\cite{FigueiraNiesStephan:SJT} was combinatorial, Hirschfeldt et al.\ \cite{GreenbergHirschfeldtNies:2012} showed that a set is strongly jump traceable iff it is Turing below each $\omega$-c.a.\ ML-random $Y$. We prove that $A$ is strongly jump traceable iff it obeys all the cost functions $\cost_{\Omega,R}$ for infinite computable $R$. In particular, it suffices to let the $\omega$-c.a.\ sets $Y$ as above range over the fragments~$\Omega_R$.

\begin{remark} \label{rem: badly behaved} The correspondence between cost functions and ML-degrees is incomplete. Firstly, a~cost function~$\cost$ determines an ML-degree, that of the sets which are smart (equivalently, ML-complete) for~$\cost$. However, not every set in that degree obeys~$\cost$. Secondly, every $K$-trivial set~$A$ is smart for some cost function~$\cost_{(A)}$. However, this cost function is not determined by the ML-degree of~$A$; in fact, in \cref{thm:shift} we construct an example of a set~$A$ such that even the set that results from $A$ by removing the first bit does not obey $\cost_{(A)}$. \end{remark}

Ideally, we could characterise ML-reducibility on $K$-trivials in terms of which cost functions they obey. This would give a satisfying positive answer to the following question, which remains open:

\begin{question}\label{qu:ML arithm}
Is the relation $\le_\ML$ on the $K$-trivial sets arithmetical?
\end{question}

In fact, a weaker question remains open: whether ML-completeness among the $K$-trivals is arithmetical. Another question that remains unsettled is whether the ML-degrees of $K$-trivials are dense. Given that Question~\ref{qu:ML arithm} remains open, it is hard to envisage a requirement-based construction showing density, as this would need some sort of effective listing of ``ML-reduction procedures''. Cost function-based methods appear to be insufficient here.

\section{Some formal definitions and facts} \label{s:costf}
In this section, for easy reference, we provide formal definitions of some of the notions discussed above. We discuss some technical detail and basic connections that will be important for the rest of the paper.

\begin{definition}[\cite{Bienvenu.Greenberg.ea:16}] \label{def:ML-reducibility}
For sets~$A$ and~$B$, we write $A \le_\ML B$ if $B \leT Y$ implies $A \leT Y$ for every ML-random sequence~$Y$.
\end{definition}

%

\smallskip
Cost functions were introduced in \cite[Section~5.3]{Nies:book} and developed further in \cite{GreenbergNies:benign,Nies:CalculusOfCostFunctions}. 

\begin{definition} \label{def:cost_function}
A \emph{cost function} is a computable function
\[
\cost\colon \NN \times \NN \ria \{r \in \RR \,:\, r \ge 0\}.
\] 
\end{definition}

We only consider \emph{monotonic} cost functions (satisfying $\cost(x,s)\le \cost(x,s+1)$ and $\cost(x,s)\ge \cost(x+1,s)$) that have the \emph{limit condition}: for all~$x$, $\limcost(x) = \lim_s \cost(x,s)$ exists, and $\lim_{x} \limcost(x)= 0$. Further, we assume that $\cost(x,s)=0$ when $x\ge s$. 

The original purpose of cost functions was to quantify the number of changes required in a computable approximation of a $\Delta^0_2$ set~$A$: $\cost(x,s)$ is the cost of changing at stage~$s$ our guess about the value of $A(x)$. Monotonicity means that the cost of a change increases with time, and that changing the value at a smaller number is more costly. Formally:

\begin{definition}[\cite{Nies:book}] \label{def:obeying_a_cost_function}
Let $\seq{A_s}$ be a computable approximation of a $\Delta^0_2$ set~$A$, and let~$\cost$ be a cost function. The \emph{total $\cost$-cost} of the approximation is 
\[ \cost \seq{A_s} = \sum_{s\in\omega} \left\{ \cost(x,s) \,: \, x \text{ is least such that } A_{s-1}(x) \ne A_{s}(x) \right\}.\]
We say that a $\Delta^0_2$ set~$A$ \emph{obeys}~$\cost$ if the total $\cost$-cost of \emph{some} computable approximation of~$A$ is finite. We write $A \models \cost$. For cost functions $\cost$ and $\dost$, we write $\cost \to \dost$ if $A \models \cost$ implies $A\models \dost$ for each $\DII$ set $A$. By \cite[Thm.\ 3.4]{Nies:CalculusOfCostFunctions}, this is equivalent to $\underline \dost \le^\times \limcost$. 
\end{definition}
The basic existence theorem for cost functions, e.g., described in \cite[Thm.~2.7(i)]{Nies:CalculusOfCostFunctions}, says that if a cost function $\cost$ has the limit condition, then some non-computable c.e.\ set obeys $A$.
As mentioned, an important example of a cost function is $\cost_\Omega(x,s) = \Omega_s-\Omega_x$, where $\seq{\Omega_s}$ is an increasing sequence of rational numbers converging to a left-c.e., ML-random real~$\Omega$. A set obeys this cost function if and only if it is $K$-trivial (\cite[Thm.~4.3]{Nies:CalculusOfCostFunctions}, which modified a result for a related cost function in \cite{Nies:AM}). 

\begin{definition} \label{def:MLComp}
Let $\cost$ be a cost function and let $A$ be a $\DII$ set. We say that $A$ is \emph{ML-complete for $\cost$} if $A \models \cost$, and $\fa B \, [B \models \cost \RA B \le_\ML A] $.
\end{definition}
Note that the implication arrow only goes from left to right; it is not true in general that the class of sets obeying a cost function is well behaved (see Remark~\ref{rem: badly behaved}).

\smallskip
We next add some formal detail to our discussion of Theorem~\ref{thm:smarts_exist} above, that each cost function has a smart c.e.\ set. As mentioned, cost functions can also be used to introduce randomness notions between weak 2-randomness and ML-randomness.

\begin{definition}[\cite{Bienvenu.Greenberg.ea:16}, Def.\ 2.13] \label{def:cost-bounded_test} 	
Let~$\cost$ be a cost function. A sequence $\seq{V_n}$ of uniformly c.e.\ open sets such that $V_n \supseteq V_{n+1}$ for each $n$ is a \emph{$\cost$-bounded test} (or $\cost$-test for short) if $\leb(V_n) \le^\times \limcost(n)$ for all~$n$. \end{definition}

We say that such a test \emph{captures} a set $Y$ if $Y \in \bigcap_n V_n$. We also say that such a~$Y$ \emph{fails} the test. A sequence is \emph{$\cost$-random} if it fails no $\cost$-test. 

The fundamental connection between our two uses of a cost function is the following:  
\begin{proposition}[\cite{Bienvenu.Greenberg.ea:16}, Prop.\ 4.2] \label{prop:basic fact} 
If $A \models \cost$ and $Y$ is an ML-random sequence captured by a $\cost$-bounded test, then $A \leT Y$.
\end{proposition}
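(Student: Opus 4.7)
The plan is to build a Turing functional $\Gamma$ with $\Gamma^Y =^* A$, from which $A \leT Y$ follows. First I fix a computable approximation $\seq{A_s}$ of $A$ with $\cost\seq{A_s} < \infty$, and the $\cost$-test $\seq{V_n}$ with $\leb(V_n) \le c\cdot \limcost(n)$ for some constant~$c$ and $Y \in \bigcap_n V_n$. After passing to a refined computable approximation (a routine trick preserving obedience up to a constant factor), I may assume that at each stage at most one position changes.

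For each~$n$, I will enumerate a c.e.\ antichain $\seq{\sigma_{n,i}}$ of strings with $[\sigma_{n,i}] \subseteq V_n$, each labelled by a bit $\ell_{n,i}\in\{0,1\}$, and declare axioms $\Gamma^{\sigma_{n,i}}(n) = \ell_{n,i}$. The enumeration is \emph{slowed down}: at each stage~$s$, I maintain the invariant that the total measure of enumerated cylinders for~$n$ is at most $2c\cdot\cost(n,s)$. As this budget grows with~$s$, I extend the antichain using fresh portions of $V_{n,s}$, labelling each new cylinder with the current value $A_s(n)$. Because $\cost(n,s)\to\limcost(n)$ while $\leb(V_n)\le c\cdot\limcost(n)$, the budget eventually catches up and the enumerated cylinders cover $V_n$ modulo a null set. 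Thus $Y\in V_n$ forces $Y$ into some cylinder, and $\Gamma^Y(n)$ is defined.

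Whenever a change occurs at stage~$s$ at its unique position~$x_s$, the currently-enumerated cylinders for $x_s$ all carry the now-incorrect label $A_{s-1}(x_s)$. I enumerate their union into a Solovay component $S_s$. By the invariant, $\leb(S_s)\le 2c\cdot \cost(x_s, s)$, so
\[
\sum_s \leb(S_s) \le 2c\cdot \cost\seq{A_s} < \infty,
\]
making $\seq{S_s}$ a valid Solovay test. ML-randomness of~$Y$ forces $Y \in S_s$ for only finitely many~$s$. If $Y$'s cylinder at level~$n$ carries a wrong label, then $Y$ belongs to the Solovay component enumerated at the last stage at which $A(n)$ changes; hence for cofinitely many~$n$, the cylinder containing $Y$ has label $A(n)$. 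This gives $\Gamma^Y =^* A$, and so $A\leT Y$.

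The main technical obstacle I expect is sustaining the slowdown invariant together with the required coverage $\bigcup_i [\sigma_{n,i}] = V_n$ modulo null. The recipe is to enumerate new cylinders whenever both the budget $2c\cdot\cost(n,s)$ and the currently-available $V_{n,s}$ permit; the match between the budget's limit $2c\cdot\limcost(n)$ and the test bound $c\cdot\limcost(n)$ then guarantees eventual coverage.
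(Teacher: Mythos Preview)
The paper does not give its own proof of this proposition; it merely cites \cite{GreenbergNies:benign} and \cite[5.3.15]{Nies:book}. Your argument is correct and is precisely the standard approach found there: slow the enumeration of $V_n$ so that at most $O(\cost(n,s))$ of its measure has been released by stage~$s$, label each new cylinder with the current bit $A_s(n)$, and dump the mislabelled mass into a Solovay test whenever $A(n)$ changes.

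One harmless slip: after multiple oscillations at position~$n$, it is not true that \emph{all} previously enumerated cylinders for~$n$ carry the label $A_{s-1}(n)$; some may already carry $A_s(n)$ from an earlier pass. This does not matter for the argument. You may either enumerate all currently enumerated cylinders into $S_s$ (the bound $\leb(S_s)\le 2c\cdot\cost(x_s,s)$ still holds), or only the mislabelled ones; in either case, a cylinder containing~$Y$ with the wrong final label is placed into $S_t$ at the \emph{last} change stage~$t$ for that~$n$. Also note that since the budget $2c\cdot\cost(n,s)$ eventually exceeds $\leb(V_{n,s})\le c\cdot\limcost(n)$ and each $V_{n,s}$ is clopen, you in fact cover all of $V_n$, not just $V_n$ modulo null.
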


To sketch the proof, say $Y$ is covered by the $\cost$-test $\seq{V_n}$. define a functional $\Gamma$ by letting $\Gamma^X(n) = A_s(n)$ if $X$ goes into $V_n$ at stage $s$. An $A$-change threatens to invalidate these definitions, so we build a Solovay test; if $A(n)$ is the least change at stage $s$, put $V_{n,s}$ into the test. Being ML-random, $Y$ is only in finitely many components of the Solovay test, so $\Gamma^Y(n) = A(n)$ for sufficiently large~$n$.

 
As mentioned, Ku\v{c}era showed that every $\DII$ ML-random sequence is Turing above a non-computable c.e.\ set. Hirschfeldt and Miller in unpublished work dating from 2006 strengthened this: below any $\Sigma^0_3$ null class of randoms there is a non-computable c.e.\ set. Relying on \cref{prop:basic fact}, these proofs can be framed in the language of cost functions; see \cite{GreenbergNies:benign} and \cite[5.3.15]{Nies:book}, respectively.


\smallskip

The existence of an ML-complete $K$-trivial was shown in~\cite{Bienvenu.Greenberg.ea:16}. Given that $K$-trivials are the sets obeying $\cost_\Omega$, and the ML-randoms computing all $K$-trivials are the ones that fail some $\cost_\Omega$-test, ML--completeness for $K$-trivials coincides with being smart for $\cost_\Omega$ in the sense of the next definition:
%

\begin{definition} \label{def:smartness}
 Let $\cost$ be a cost function and~$A$ be a $K$-trivial set. We say that~$A$ is \emph{smart for $\cost$} if~$A$ obeys~$\cost$ and for each ML-random set $Y$, 
\begin{center} $Y$ is captured by a $\cost$-bounded test $ \LR A \leT Y$. \end{center} 
\end{definition}

Informally, $A$ is as complex as possible for obeying $\cost$, in the sense that the only random sets $Y$ above $A$ are the ones that have to be there because of \cref{prop:basic fact}. To summarise the discussion in the introduction, in \cref{thm:smarts_exist}, we show that there is a smart set for any cost function~$\cost$ such that obedience to $\cost$ implies $K$-triviality. In Corollary~\ref{cor:smart ML complete}, we use this to show that if $\cost \to \cost_\Omega$ then $A$ is smart for $\cost$ iff $A$ is ML-complete for $\cost$. Dual to Theorem~\ref{thm:smarts_exist}, in \cref{prop:A_is_smart_for_c_A}, we prove that any $K$-trivial set is smart for some cost function $\cost_{(A)}$; when $A$ is c.e., this will be the strongest cost function obeyed by~$A$.

\section{Inherent enumerability of the \texorpdfstring{$K$}{K}-trivials up to \texorpdfstring{$\equiv_\ML$}{ML-equivalence}}
\label{sec:inherent_enumerability}

In this section, we considerably strengthen the result~\cite{Nies:AM} that every $K$-trivial is computable from a c.e.\ $K$-trivial: we show that the c.e.\ $K$-trivial can be taken to have the same ML-degree.
 This is a powerful tool. It is usually easier to prove results for the c.e.\ $K$-trivials; extra work is needed to lift results to the general case. \cref{thm:ML_degrees_ce_generated} simplifies this process in many cases. Indeed, we use it in both \cref{sec:strongest} and \cref{sec:proof} for this purpose. 
\begin{thm} \label{thm:ML_degrees_ce_generated}
For every $K$-trivial set $A$, there is a ($K$-trivial) c.e.\ set $D$ such that $D \geT A$ and $D \equiv_\ML A$. \end{thm}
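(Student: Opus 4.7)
My plan is to build $D$ as a c.e.\ set whose enumeration mirrors, in a tightly controlled way, the change history of a specific computable approximation of $A$. The two requirements $D \geq_\Tur A$ and $D \leq_{\ML} A$ pull in opposite directions: the first asks that $D$ carry enough information to recover $A$, while the second asks that $D$'s enumeration be ``traceable'' from any ML-random oracle that computes $A$. The cost function $\cost_\Omega$, which characterises $K$-triviality, is the bridge between these requirements.

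\textbf{Setup.} Since $A$ is $K$-trivial, fix a computable approximation $\langle A_s\rangle$ of $A$ with $\cost_\Omega\langle A_s\rangle<\infty$; write $x_s$ for the least bit that changes at stage $s$. Without loss of generality $A_0=\emptyset$. Build $D$ by reserving, for each $n$, a computable column of coding locations, and enumerating one fresh location from column $n$ every time $A\uhr n$ changes in the approximation (with redundancy so that $D$ as an oracle can recover $A$ column by column: e.g.\ by parity, aided by an auxiliary encoding that signals stabilisation to the oracle). This gives $D \geq_\Tur A$ by construction. Moreover, a change in $A$ at position $x_s$ at stage $s$ triggers enumerations into $D$ at locations tied to $n \le x_s$, and the $\cost_\Omega$-weight of each such enumeration is bounded (by monotonicity) by $\cost_\Omega(x_s,s)$ up to a multiplicative constant; summing gives that $D$ obeys $\cost_\Omega$, so $D$ is $K$-trivial.

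\textbf{Main step: $D \leq_{\ML} A$.} Suppose $Y$ is ML-random and $\Phi^Y=A$ for some Turing functional $\Phi$. I aim to show $Y \geq_\Tur D$ by producing, from $Y$, a decision procedure for $D$. The informal idea is that $Y$ can evaluate $\Phi^Y\uhr n$ for arbitrarily large $n$, giving correct initial segments of $A$, and these certify, stage by stage, which coding locations of $D$ have been filled. The difficulty is that $Y$ has no \emph{a priori} bound on the use of $\Phi$, so it must ``guess'' a stage at which $\Phi^Y$ has settled on a given initial segment; wrong guesses correspond to changes in $A$'s approximation, and these are precisely what $\cost_\Omega$ measures. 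Concretely, for each functional $\Phi$ I build a uniformly c.e.\ sequence of open sets $\langle V_n^\Phi\rangle$: at stage $s$, whenever $A$ changes on an initial segment of length $\leq n$, I enumerate into $V_n^\Phi$ all oracles $Y$ for which the current $\Phi^Y_s$ agrees with the old value of $A$ up to that segment. The measure of $V_n^\Phi$ is bounded by the tail sum of $\cost_\Omega$-weights of changes below level $n$, which is $O(\limcost_\Omega(n))$; so $\langle V_n^\Phi\rangle$ is a $\cost_\Omega$-test. Any $Y$ with $\Phi^Y=A$ for which the decoding of $D$ fails at infinitely many levels must lie in $\bigcap_n V_n^\Phi$, hence is captured; since $D\models\cost_\Omega$, \cref{prop:basic fact} then gives $D \leq_\Tur Y$. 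Ranging over all $\Phi$ yields $D \leq_{\ML} A$.

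\textbf{Main obstacle.} The real work is making the test $\langle V_n^\Phi\rangle$ capture precisely the ``bad'' oracles without ballooning in measure. This requires calibrating the coding in the construction of $D$ so that each enumeration event is chargeable to a single change $x_s$ in $A$'s approximation, and ensuring that the ``certificates'' of agreement used to enumerate into $V_n^\Phi$ are disjoint enough across stages that the total measure telescopes into the finite $\cost_\Omega$-cost of $\langle A_s\rangle$. Once this bookkeeping is set up cleanly, the rest follows from the cost-function machinery already laid out.
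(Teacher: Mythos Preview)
Your overall architecture---build $D$ as a change set for a $\cost_\Omega$-obeying approximation of $A$, then argue that any random $Y$ with $\Phi^Y=A$ either decodes $D$ directly or falls into a test---is close in spirit to the paper's. The paper, too, builds $D$ as the change set of a carefully chosen approximation and shows that randoms computing $A$ can recover the modulus. But your main step contains a real gap.

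The claimed bound $\leb(V_n^\Phi)=O(\limcost_\Omega(n))$ is not justified and, as stated, should not hold. At a stage $s$ where $A\uhr m$ changes (with $m\le n$), you enumerate into $V_n^\Phi$ the whole set $\{Y:\Phi^Y_s\succeq A_{s-1}\uhr m\}$. The Lebesgue measure of that set has no \emph{a priori} relationship to $\cost_\Omega(m,s)=\Omega_s-\Omega_m$; it depends only on the functional $\Phi$ and can be as large as $1$. There is nothing in your setup that ties the mass of oracles agreeing with a given string to the cost of the corresponding change, so the ``tail sum of $\cost_\Omega$-weights'' simply does not bound what you are enumerating. Consequently $\seq{V_n^\Phi}$ need not be a $\cost_\Omega$-test, and the appeal to \cref{prop:basic fact} for the ``bad'' oracles collapses.

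The paper confronts exactly this difficulty and resolves it by two ideas you are missing. First, it does not try to bound the measure of oracles computing the \emph{correct} initial segment of $A$ (which, as the paper notes, need not even give a discrete measure). Instead it charges each such ``too early'' oracle to the stage at which the approximation swings to an \emph{incorrect} value and back; the point is that the strings $(A\uhr n)\conc(1-A(n))$ form an antichain, so the pull-backs $\Upsilon^{-1}[\,\widehat{A\uhr n+1}\,]$ are disjoint and their measures define a legitimate (adaptive) cost function. Second, to obtain an approximation of $A$ obeying this adaptive cost, the paper invokes the Main Lemma of the golden run, not merely the fact that $A\models\cost_\Omega$. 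The output is a Solovay test (not a $\cost_\Omega$-test) capturing any random that computes $A$ faster than the approximation settles; from this one gets a single approximation whose modulus is computable from every random above $A$, and $D$ is then just its change set. Your argument would need something of this strength to go through.
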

%

Note that the $K$-triviality of $D$ is free: every $K$-trivial is Turing below an incomplete ML-random sequence~\cite{CoveringProblemAnnouncement,DayMiller:Covering}, and every c.e.\ set below an incomplete random is $K$-trivial. So by virtue of being ML-equivalent to~$A$, the c.e.\ set~$D$ must be $K$-trivial. 

 \cref{thm:ML_degrees_ce_generated} follows from a fact of independent interest. Intuitively, the fact states that each $K$-trivial $A$ has a computable approximation that converges faster than any computation of~$A$ from a random.

\begin{lemma} \label{lem:randoms_compute_modulus}
For every $K$-trivial set $A$, there is a computable approximation $\seq{A_s}$ of~$A$ such that for every ML-random $X$ and Turing functional~$\Phi$ with $A = \Phi^X$ the following holds. For sufficiently large~$n$, if $A\uhr n \preceq \Phi^X_s$, then $A_t\uhr n = A\uhr n$ for every $t \ge s$.
\end{lemma}

\begin{proof}[Proof of \cref{thm:ML_degrees_ce_generated}]
Assuming that the lemma holds, we argue that a random computing~$A$ must also compute a modulus for~$A$; this modulus will have a c.e.\ degree. Let $\seq{A_s}$ be the approximation from the lemma. Let $D$ be the change-set for this approximation: $(n,k) \in D$ if and only if there is a sequence of stages $s_0 < \cdots < s_k$ with $A_{s_i}(n) \neq A_{s_{i+1}}(n)$ for all $i < k$. $D$ is clearly c.e., and it can compute $A(n)$ by searching for the least $k$ with $(n, k) \not \in D$ and considering the parity of $k$ and the value of $A_0(n)$.

Suppose that $A = \Phi^X$ for some random $X$. By the lemma, there is $N$ such that for all $n\ge N$, the approximation converges to $A\uhr n$ faster than $\Phi^X$ does. Thus $X$ can compute $D(n, k)$ by waiting until a stage $t$ with $A\uhr n \preceq \Phi^X_t$ and then only searching for sequences of stages $s_0 < \cdots < s_k$ such that $s_k \le t$. For $n< N$, we can arrange that our computation knows $D(n,k)$ by table lookup.
\end{proof}
For the rest of the paper, we fix a Turing functional~$\Upsilon$ that is universal in the sense that $\Upsilon^{{0^e1}\conc{X}} = \Phi_e^{X}$ for each~$X$ and~$e$. We assume that for every~$e$, for all sufficiently large~$n$, for all~$s$ and $X$, $\Phi_{e,s}(X;n)\converge \Rightarrow \Upsilon_s(0^e1\conc{X};n)\converge$. 
\begin{proof}[Proof of~\cref{lem:randoms_compute_modulus}] 
\NumberQED{lem:randoms_compute_modulus} It suffices to prove the lemma for the functional~$\Upsilon$. 
Let us first give a brief explanation of the proof. We will use the ``Main Lemma'' derived from the golden run construction~\cite[5.5.1]{Nies:book}. The Main Lemma says that if we design a left-c.e.\ oracle discrete measure on~$\w$, (equivalently, an adaptive additive cost function, or a prefix-free oracle machine), then there is a computable approximation $\seq{A_s}$ of~$A$ such that the total of all weights that are believed at some stage of the construction and later are shown to be false is finite. Roughly, we would like, at stage $s$, to put the weight $\leb (\Upsilon_s^{-1}[A_s\rest{n}])$ on the string $A_s\rest{n}$, where $\leb$ is Lebesgue measure on Cantor space, and 
\[
	\Upsilon_s^{-1}[\s] = \big\{ X\in 2^\w \,:\, \Upsilon_s^X \succeq \s \big\}.
\]
Such an approximation for~$A$ will be as required: we can put a Solovay test on the reals that compute~$A$ too early, and thus random oracles will only converge and agree with $A_s\rest{n}$ after it has settled. 

The problem is that this definition does not give a discrete measure: there is no reason to believe that $\sum_n \leb (\Upsilon^{-1}[A\rest{n}])$ is finite. What we notice, though, is that if an oracle~$X$ gives us a correct version of~$A$ too early, then this version $A_s\rest{n}=A\rest{n}$ will later change to $A_t\rest{n}\ne A\rest{n}$, but after that will need to change back. We can thus put the weight not on $A\rest{n}$ but on an incorrect version $A_t\rest{n}$. And this is guaranteed to give a measure: the collection of strings of the form $(A\rest{n})\conc (1-A(n))$ that disagree with~$A$ only on the last bit is pairwise incomparable, and so the preimages under~$\Upsilon$ of these strings are pairwise disjoint. 

\smallskip
We provide the formal details. For $\sigma \in 2^{<\w}$ with $\sigma \neq \langle\rangle$, define $\hat{\sigma}$ to be the binary string of the same length which disagrees with $\sigma$ on the final bit, but agrees on all other bits. For example, if $\sigma = 001011$, then $\hat{\sigma} = 001010$. For $\sigma \in 2^{<\w}$ with $\sigma \neq \langle\rangle$, for brevity, define
\[
\+U_\sigma = \left\{ X \ : \ \Upsilon^X \succeq \hat{\sigma}\right\}.
\]
To avoid the need of repeatedly dealing with $\langle\rangle$ separately, define $\+U_{\langle\rangle} = \emptyset$. Note that $(\+U_\sigma)_{\sigma \in 2^{<\w}}$ are uniformly $\Sigma^0_1$-classes. Also, for $\sigma \precneq \rho$, $\+U_\sigma$ and $\+U_\rho$ are disjoint.

For our argument, we will require a computable approximation to $A$ that obeys an adaptive cost function---a cost function where the cost at a given stage depends on the approximation up to that stage. For $\seq{A_t}$, a computable approximation of~$A$, define
\[
\cost^{\seq{A_t}}(n,s) = \leb(\+U_{A\uhr{n+1}}[s]).
\]

\begin{claim} \label{clm:modulus:adaptive}
There is a computable approximation $\seq{A_t}$ to $A$ that obeys $\cost$. That is, if $n_s$ is least with $A_s(n_s) \neq A_{s+1}(n_s)$, then $\sum_s \cost^{\seq{A_t}}(n_s,s ) < \infty$.
\end{claim}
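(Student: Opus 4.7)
The plan is to invoke the adaptive version of the main lemma of the golden run construction, exactly as foreshadowed in the paragraph preceding the claim. That lemma says: given a process that, based on a computable approximation of a $K$-trivial $A$, enumerates weights onto positions of $\omega$ in such a way that the total weight at any stage is bounded by $1$, one can refine the approximation (making it ``speedy'' enough) so that the sum of weights placed at positions that subsequently get corrected is finite. Our task is therefore to present $\cost^{\langle A_t\rangle}$ as the cost function arising from such a bounded adaptive additive process.

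The key structural observation is that the strings $\{\hat{A\uhr{n+1}} : n\in\w\}$ form an antichain: $\hat{A\uhr{n+1}}$ has length $n+1$, agrees with $A$ on the first $n$ bits and flips bit $n$, so two such strings with distinct $n$ disagree either in length-initial-segment or on a bit where both still agree with $A$. Consequently the classes $\+U_{A\uhr{n+1}}$ are pairwise disjoint and $\sum_n \leb(\+U_{A\uhr{n+1}}) \le 1$. Crucially, the same argument applies stage by stage with any guess $A_s$ in place of $A$: the strings $\hat{A_s\uhr{n+1}}$, $n < s$, form an antichain, so
\[
\sum_{n<s} \cost^{\langle A_t\rangle}(n,s) \;=\; \sum_{n<s} \leb\!\left(\+U_{A_s\uhr{n+1}}[s]\right) \;\le\; 1
\]
uniformly in $s$. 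This uniform bound is precisely the measure condition needed to package $\cost^{\langle A_t\rangle}$ as a bounded left-c.e.\ oracle discrete measure (equivalently, to simulate it by a prefix-free oracle machine of weight at most $1$): at each stage $s$, for each $n<s$, post the positive increment $\leb(\+U_{A_s\uhr{n+1}}[s]) - \leb(\+U_{A_s\uhr{n+1}}[s-1])$ on position $n$; whenever $A_s\uhr{n+1}$ changes, restart the weight at $n$ against the new string. The total weight ever posted on a single position is dominated by $1$ across all positions at any fixed stage.

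Having exhibited $\cost^{\langle A_t\rangle}$ as such an adaptive additive cost function of measure at most $1$, the main lemma of \cite[\S 5.5]{Nies:book} (the adaptive form of the cost function existence theorem, cf.\ also \cite[\S 3]{Nies:CalculusOfCostFunctions}) produces a computable approximation $\langle A_t\rangle$ of $A$ with $\sum_s \cost^{\langle A_t\rangle}(n_s,s)<\infty$, where $n_s$ is the least position of change between $A_{s-1}$ and $A_s$. This is exactly the conclusion of the claim.

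The step I expect to require the most care is the third one: ensuring that the dependence of $\cost^{\langle A_t\rangle}(n,s)$ on the \emph{current} approximation $A_s$ (rather than on a fixed target string) is genuinely compatible with the adaptive-cost-function hypotheses of the golden run lemma. The worry is that a mind-change in $A_s$ at position $n$ causes the weight target at $n$ to shift to a different string $\hat{\sigma}$, potentially making the ``weights ever posted'' unbounded. The uniform stagewise bound $\sum_n \cost^{\langle A_t\rangle}(n,s) \le 1$, together with monotonicity in $s$ of each $\leb(\+U_\sigma[s])$, is what prevents this and lets the adaptive framework swallow our cost function without modification.
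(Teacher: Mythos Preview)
Your proposal is correct and follows essentially the same route as the paper: you identify the antichain property of the strings $\widehat{A_s\uhr{n+1}}$ (the paper phrases this as ``$\+U_\sigma$ and $\+U_\rho$ are disjoint for $\sigma\precneq\rho$''), use it to build a prefix-free oracle machine of bounded weight, and then invoke the Main Lemma of the golden run to obtain the speedy approximation. The paper carries this out concretely by setting $M^\sigma_s(\pi)\!\converge$ for $\pi$ in an antichain generating $\+U_{\sigma,s}$, which is exactly your ``package $\cost^{\langle A_t\rangle}$ as a bounded left-c.e.\ oracle discrete measure'' step made explicit.
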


\begin{proof}
\NumberQED{clm:modulus:adaptive}
Uniformly in $\sigma$ and $s$, let $C_{\sigma,s} \subset 2^{<\w}$ be a finite anti-chain that generates $\+ U_{\sigma,s}$, with $C_{\sigma, s} \subseteq C_{\sigma, s+1}$. Define an oracle machine $M$ with $M^{\sigma}_s(\pi)\converge$ for $\pi \in C_{\sigma,s}$. Since $\+ U_\sigma$ and $\+ U_\rho$ are disjoint for $\sigma \precneq \rho$, $M$ is prefix-free.

Note that for $s>n$ and any computable approximation $\seq{A_t}$,
\[
\sum_\pi 2^{-|\pi|} \bool{ M^A(\pi)[s]\converge \& \ \text{use}\, M^A(\pi)[s] = n+1} \ge \cost^{\seq{A_t}}(n,s).
\]
Fix a computable approximation $\dot{\seq{A_q}}$ to $A$. By the Main Lemma~\cite[5.5.1]{Nies:book} derived from the golden run construction, there is a computable sequence $q(0) < q(1) < \cdots$ with $q(0) \ge 1$, such that if we define $m_s$ to be least with $\dot{A}_{q(s)} \neq \dot{A}_{q(s+1)}$, then
\[
\sum_s \sum_\pi 2^{-|\pi|} \bool{ M^{\dot{A}}(\pi)[q(s)]\converge \& \ m_s < \text{use}\, M^{\dot{A}}(\pi)[q(s)] \le q(s-1)} < \infty.
\]
Now let $A_t = \dot{A}_{q(t)}$, so $n_s = m_s$. Since $s \le q(s-1)$, if $n_s < s$ then the inner summation above is at least $\cost^{\seq{A_t}}(n_s, s)$. So $\sum_s \cost^{\seq{A_t}}(n_s, s) < \infty$, as desired.
\end{proof}


Now, let $\seq{B_s}$ be a uniformly computable sequence of finite anti-chains with
\[
\bigcup_{\tau \in B_s} [\tau] = \+U_{A\uhr{n_s+1}}[s],
\]
and define $S = \bigcup_s B_s$. Note that
\begin{eqnarray*}
\sum_{\tau \in S} 2^{-|\tau|} &\le& \sum_s \sum_{\tau \in B_s} 2^{-|\tau|}\\
&=& \sum_s \leb(\+U_{A\uhr{n_s+1}}[s])\\
&=& \sum_s \cost^{\seq{A_t}}(n_s,s) < \infty.
\end{eqnarray*}
Thus $S$ is a Solovay test.

Let~$X$ be random and suppose that $A = \Phi_e^X$. Then $Y = 0^e1\conc{X}$ is random and $A = \Upsilon^{Y}$. Since $Y$ is not captured by $S$, we can fix an $s_0$ such that no $B_s$ with $s \ge s_0$ contains an initial segment of $Y$. Fix $N$ such that for all $n \ge N$, if $A\uhr n \preceq \Upsilon^Y_s$, then $s \ge s_0$. 

\begin{claim} \label{clm:modulus:success}
For all $n \ge N$, if $A\uhr n \preceq \Upsilon^Y_s$, then $A_t\uhr n = A\uhr n$ for every $t \ge s$.
\end{claim}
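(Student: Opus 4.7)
The plan is to prove the claim by contradiction, directly using the Solovay test~$S$. Fix $n \ge N$ and~$s$ with $A\uhr n \preceq \Upsilon^Y_s$; by the choice of~$N$ we have $s \ge s_0$, so no $B_t$ with $t \ge s_0$ contains an initial segment of~$Y$. Assume for contradiction that there exists $t \ge s$ with $A_t\uhr n \ne A\uhr n$.

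The key step is to locate an appropriate ``last change'' stage at which the approximation corrects an incorrect bit below~$n$. Since $\seq{A_t}$ converges pointwise to~$A$, the values $A_t\uhr n$ eventually stabilise at $A\uhr n$, so one may take $t^*$ to be the greatest $t \ge s$ for which $A_t\uhr n \ne A_{t+1}\uhr n$. By the choice of~$t^*$, we have $A_{t^*+1}\uhr n = A\uhr n$ while $A_{t^*}\uhr n \ne A\uhr n$, and in particular the least-change index satisfies $n_{t^*} < n$. Using the minimality of~$n_{t^*}$ together with $A_{t^*+1}\uhr n = A\uhr n$, one then reads off $A_{t^*}\uhr{n_{t^*}} = A\uhr{n_{t^*}}$ and $A_{t^*}(n_{t^*}) \ne A(n_{t^*})$; equivalently, $\hat{A_{t^*}\uhr{n_{t^*}+1}} = A\uhr{n_{t^*}+1}$. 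Since $t^* \ge s$, the computation $\Upsilon^Y_{t^*}$ still extends $A\uhr n$, hence extends $A\uhr{n_{t^*}+1} = \hat{A_{t^*}\uhr{n_{t^*}+1}}$. Therefore $Y \in \+U_{A_{t^*}\uhr{n_{t^*}+1}}[t^*]$, so some initial segment of~$Y$ appears in $B_{t^*}$, contradicting $t^* \ge s_0$.

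The main conceptual obstacle is reconciling the hat operation with the true limit: to land in the Solovay test, one needs the \emph{flip} of the current guess $A_{t^*}\uhr{n_{t^*}+1}$ to be an initial segment of~$A$. Choosing $t^*$ as the \emph{last} stage below~$n$ at which the approximation changes is precisely the right move, because it guarantees that the bit about to be flipped is exactly wrong with respect to~$A$, and so the hat operation lands on a correct initial segment of~$A$, which in turn lies below $\Upsilon^Y_{t^*}$ by hypothesis. Everything else---that $S$ is a Solovay test, and that the choice of~$N$ converts the hypothesis $A\uhr n \preceq \Upsilon^Y_s$ into $s \ge s_0$---is already in hand from the construction preceding the claim.
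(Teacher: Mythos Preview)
Your proof is correct and follows essentially the same route as the paper's. The paper picks a stage $t\ge s$ with $A_t\uhr n\ne A\uhr n$ and $A_{t+1}\uhr n=A\uhr n$ and derives $Y\in\+U_{A_t\uhr{n_t+1}}[t]$ exactly as you do; the only cosmetic difference is that the paper uses $\Upsilon^Y_s$ directly (since $\+U_\sigma[s]\subseteq\+U_\sigma[t]$) whereas you invoke monotonicity of the functional to pass to $\Upsilon^Y_{t^*}$.
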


\begin{proof}
\NumberQED{clm:modulus:success}
Suppose $n \ge N$ were a counterexample. Let $s$ be such that $A\uhr n \preceq \Upsilon^Y_s$ and $t\geq s$ be such that $A_t\uhr n \neq A\uhr n$ and $A_{t+1}\uhr n = A\uhr n$. Note that definitionally, $n_t < n$. Since $A_t\uhr n_t = A_{t+1}\uhr n_t$, we know that $A_t\uhr n_t = A\uhr n_t \prec \Upsilon^Y_s$ and $A_t\uhr n_t+1 \neq A\uhr n_t+1$. So $\widehat{(A_t\uhr n_t+1)} = A_{t+1}\uhr n_t+1$, and $Y \in \+U_{A_t\uhr n_t+1}$. By assumption, $Y$ has already entered this $\Sigma^0_1$-class by stage $s$. Since $t\geq s$, $B_t$ contains an initial segment of $Y$, contrary to our choice of $N$ and $s_0$.
\end{proof}

Since for sufficiently large $n$, convergence of $\Phi^X_{e,s}$ up to $n$ implies convergence of $\Upsilon^Y_s$ up to $n$, the lemma follows.
\end{proof}

\section{For each cost function there is an ML-complete set}
\label{sec:smart}

In this section, we prove the existence of a smart set as in Definition~\ref{def:smartness} for each cost function $\cost$ that implies $K$-triviality. This yields in Corollary~\ref{cor:smart ML complete} the equivalence of smartness for $\cost$, and ML-completeness for the class of sets obeying $\cost$, and thereby the existence of an ML-complete for the class.

For cost functions~$\cost$ and $\dost$, one writes $\cost \to \dost$ if $A \models \cost $ implies $A \models \dost$ for every $\Delta^0_2$ set~$A$. By \cite[Thm.~3.4]{Nies:CalculusOfCostFunctions}, this is equivalent to $\limcost \ge^\times \underline{\dost}$, that is, $\limcost$ multiplicatively dominates $\underline {\dost}$ (we may assume $\limcost(x) >0$ for every~$x$).
Recall that a set obeys~$\cost_\Omega$ if and only if it is $K$-trivial. So all sets obeying a cost function~$\cost$ are $K$-trivial if and only if $\cost\to \cost_\Omega$. 

We start with two simple lemmas.

\begin{lemma} \label{lem:shift}
Suppose that $aY$ fails a $\cost$-bounded test $\bigcap_n V_n$, where $a \in \{0,1\}$. Then $Y$ fails a $\cost$-bounded test.
\end{lemma}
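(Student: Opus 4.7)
The plan is to produce the required $\cost$-bounded test capturing $Y$ by pulling back the given test through the prepending map $Z \mapsto aZ$. Explicitly, given the $\cost$-bounded test $\seq{V_n}$ capturing $aY$, I would define
\[
W_n = \{Z \in 2^\omega \,:\, aZ \in V_n\}.
\]
In terms of c.e.\ generating sets of strings, if $V_n = \bigcup_{\tau \in S_n} [\tau]$ for a uniformly c.e.\ family $\seq{S_n}$, then $W_n$ is generated uniformly c.e.\ by the strings obtained by dropping an initial $a$ from any $\tau \in S_n$ with $\tau \succeq \langle a \rangle$, together with $[0] \cup [1]$ for any $\tau \in S_n$ of length $\le 1$ compatible with $\langle a \rangle$. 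So $\seq{W_n}$ is uniformly c.e.\ open.

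Next I would check the three remaining properties. Nesting: if $V_{n+1} \subseteq V_n$, then $aZ \in V_{n+1}$ implies $aZ \in V_n$, so $W_{n+1} \subseteq W_n$. Capture: since $aY \in V_n$ for every $n$, by definition $Y \in W_n$ for every $n$, hence $Y \in \bigcap_n W_n$. Measure bound: the map $Z \mapsto aZ$ is measure-preserving up to a factor of $2$ when viewed as a bijection of $2^\omega$ with the cylinder $[a]$, so
\[
\leb(W_n) = 2\,\leb(V_n \cap [a]) \le 2\,\leb(V_n) \le^\times \limcost(n),
\]
where the final inequality uses the hypothesis that $\seq{V_n}$ is a $\cost$-bounded test, and the multiplicative constant $2$ is absorbed by $\le^\times$.

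There is no real obstacle here; the statement is essentially a bookkeeping lemma saying that $\cost$-bounded tests are closed under the (measure-doubling but still multiplicatively bounded) shift operation. The only point worth flagging is that the definition of $\cost$-bounded test uses $\le^\times$ rather than $\le$, which is exactly what makes the factor of $2$ harmless; if the definition required $\leb(V_n) \le \limcost(n)$ on the nose, one would have to work slightly harder, e.g.\ by replacing $W_n$ with $W_{n+c}$ for a suitable shift $c$ depending on the multiplicative constants available from $\limcost$.
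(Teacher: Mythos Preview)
Your proof is correct and essentially identical to the paper's: the paper also intersects $V_n$ with $[a]$, applies the shift operator $T$ (equivalently, your pullback $W_n = \{Z : aZ \in V_n\} = T[V_n \cap [a]]$), observes that this doubles the measure, and absorbs the factor of $2$ into the $\le^\times$ bound. You have simply spelled out the details (uniformly c.e., nesting, capture) that the paper leaves implicit.
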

\begin{proof}
We may suppose $a=0$ and $X \in V_n$ implies $X(0) =0$. Then $\leb (T[V_n])= 2 \leb (V_n)$, where $T$ is the usual shift operator on Cantor space, and so $ \seq{ T [V_n]}$ is also a $\cost$-bounded test. Clearly $Y$ fails it.
\end{proof}

We recall~\cite{Nies:CalculusOfCostFunctions} that an \emph{additive} cost function is a cost function of the form~$\cost_\alpha(n,s) = \alpha_s-\alpha_n$, where~$\seq{\alpha_s}$ is an increasing approximation of a left-c.e.\ real~$\alpha$. So $\limcost_\alpha(n) = \alpha-\alpha_n$. Since~$\Omega$ is Solovay complete among the left-c.e.\ reals, every time we see an increase in~$\alpha$, we can cause a proportional and later increase in~$\Omega$. Thus:

\begin{lemma} \label{lem:Omega_is_slow}
If~$\cost_\alpha$ is an additive cost function, then $\cost_\Omega\to \cost_\alpha$.
\end{lemma}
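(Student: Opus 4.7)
The plan is to invoke the characterisation cited just before the lemma: by \cite[Thm.~3.4]{Nies:CalculusOfCostFunctions}, the statement $\cost_\Omega \to \cost_\alpha$ is equivalent to $\limcost_\Omega \ge^\times \underline{\cost_\alpha}$. Since $\underline{\cost_\alpha}(n) = \alpha - \alpha_n$ and $\limcost_\Omega(n) = \Omega - \Omega_n$, the problem reduces to producing a computable increasing approximation $\seq{\Omega_s}$ of $\Omega$ and a constant $c > 0$ such that
\[
\Omega - \Omega_n \ \ge\ c(\alpha - \alpha_n) \qquad \text{for all } n.
\]

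This is exactly the assertion that $\alpha$ is Solovay-reducible to $\Omega$, which is the classical Solovay-completeness of~$\Omega$ among left-c.e.\ reals. To implement the informal remark preceding the lemma, I would run a Kraft--Chaitin style construction: fix a constant $c > 0$ chosen small enough to accommodate the coding overhead, and at each stage $s$ with $\alpha_{s+1} > \alpha_s$, enumerate a request of weight $c(\alpha_{s+1}-\alpha_s)$ for a fresh code word into the universal prefix-free machine. The resulting contribution to $\Omega$ is at least $c(\alpha_{s+1}-\alpha_s)$, appearing at some stage $t > s$. Summing, the induced approximation to $\Omega$ satisfies $\Omega_t - \Omega_n \ge c(\alpha_t - \alpha_n)$ whenever $t$ is large enough for all requests issued through stage $n$ to have appeared, and letting $t \to \infty$ yields the displayed inequality. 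If we prefer, we can simply cite Solovay's theorem (e.g.\ \cite[Thm.~3.2.15]{Nies:book}) directly.

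There is essentially no obstacle here: the content is the well-known universality/Solovay-completeness of $\Omega$. The only care required is to note that, because the characterisation of $\to$ is stated up to multiplicative constants, the factor $c$ in Solovay reducibility is precisely the shape of domination needed.
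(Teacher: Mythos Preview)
Your proposal is correct and follows essentially the same approach as the paper: the paper's proof is just the one-sentence remark preceding the lemma (``by the universality of~$\Omega$, every time we see an increase in~$\alpha$, we can cause a proportional and later increase in~$\Omega$''), and you have unpacked this into the standard Kraft--Chaitin/Solovay-completeness argument together with the $\le^\times$ characterisation of $\to$. One small point of phrasing: the approximation $\seq{\Omega_s}$ is already fixed by the definition of $\cost_\Omega$, so rather than ``producing'' an approximation you are showing that the Kraft--Chaitin requests contribute to the \emph{given} (standard) approximation at later stages; with that adjustment the argument goes through exactly as you describe.
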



In particular, $2^{-n}\le^\times \limcost_\Omega(n)$.

\begin{theorem} \label{thm:smarts_exist} \

\noindent 
Given a cost function $\cost $ such that $\cost \to \cost_\Omega$, one can uniformly obtain a c.e.\ set $A$ which is smart for~$\cost$.
\end{theorem}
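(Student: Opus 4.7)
The direction $(\Leftarrow)$ is immediate from \cref{prop:basic fact} once we have $A\models\cost$. For $(\Rightarrow)$, the plan is to construct a c.e.\ set $A$ obeying $\cost$ together with a single $\cost$-bounded test $\seq{V_n}$ that captures every ML-random $Y \geq_\Tur A$.

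By \cref{lem:shift}, it suffices to handle the universal functional $\Upsilon$: if every ML-random $Y$ with $\Upsilon^Y = A$ is captured by a $\cost$-bounded test, then after stripping the fixed prefix $0^e 1$, every ML-random computing $A$ via any $\Phi_e$ is captured by a (shifted) $\cost$-bounded test. I would then mimic the hat-string argument from \cref{lem:randoms_compute_modulus}: at each stage $s$ and each $n<s$ for which $\Upsilon_s^\sigma \succeq A_s\uhr{n+1}$ for some $\sigma$, record the weight $\leb(\+U_{A_s\uhr{n+1}})[s]$ against the hat string $\widehat{A_s\uhr{n+1}}$. Since hat strings arising from distinct $A_s\uhr{n+1}$ are pairwise incomparable, this defines a valid oracle prefix-free machine $M$, giving a discrete weight of total mass at most $1$.

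The test would be $V_n = \bigcup_s \left\{Y : \Upsilon^Y_s \succeq A_s\uhr{n+1} \text{ while } A_t\uhr{n+1}\ne A_s\uhr{n+1} \text{ for some } t\geq s \right\}$. Any ML-random $Y$ with $\Upsilon^Y = A$ certifies each correct $A\uhr{n+1}$ at the \emph{latest} stage before its final stabilisation, and thus is captured by $V_n$ for cofinitely many $n$ (this is exactly the content of \cref{clm:modulus:success}). The measure of $V_n$ is controlled by the total weight $M$ places on hat strings of length $n+1$ that are later revealed to be correct initial segments; this quantity is naturally expressed through an adaptive additive cost function $\cost^*$ on the approximation $\seq{A_s}$. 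By \cref{lem:Omega_is_slow}, $\cost_\Omega$ multiplicatively dominates any additive cost function, and by the hypothesis $\cost \to \cost_\Omega$, so does $\cost$. This gives $\leb(V_n) \leq^\times \limcost(n)$, so $\seq{V_n}$ is $\cost$-bounded.

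The crux, and main obstacle, is to build the c.e.\ set $A$ so that its approximation $\seq{A_s}$ simultaneously obeys $\cost$ and is compatible with $M$ in a golden-run sense. Since $M$ itself is constructed on the fly from $\Upsilon$ and from the approximation, $\cost^*$ is adaptive: it depends on the very approximation we seek. The plan is to run a joint permitting construction in which the c.e.\ enumeration of $A$ is driven by both $\cost$ and the feedback from $M$, and then invoke the main lemma of the golden run to close the loop and produce an approximation $\seq{A_s}$ obeying both cost functions. The $\cost\to\cost_\Omega$ hypothesis provides exactly the multiplicative slack needed to combine them into a single existence argument.
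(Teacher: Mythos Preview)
Your proposal has a genuine conceptual gap: the golden-run machinery of \cref{lem:randoms_compute_modulus} analyses a \emph{given} $K$-trivial set by producing a \emph{fast} approximation, whereas here you need to \emph{construct} the set $A$ from scratch. You invoke \cref{clm:modulus:success} as evidence that a random $Y$ with $\Upsilon^Y=A$ lands in your $V_n$, but that claim says exactly the opposite: for the golden-run approximation, $\Upsilon^Y$ reaches $A\uhr n$ only \emph{after} $A_t\uhr n$ has already stabilised, so $Y\notin V_n$ for large $n$. To make your test capture $Y$, you would need the approximation to be \emph{slow}, not fast---and nothing in your outline forces that. Relatedly, the asserted bound $\leb(V_n)\le^\times\limcost(n)$ is unjustified: the mass of oracles that at some stage compute a later-changing $A_s\uhr{n+1}$ is governed by the error-set increase at positions $\le n$, which bears no a priori relation to $\limcost(n)$ unless you arrange it.

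The paper's proof works the other way round. One defines directly $\+U_{k}=\bigcup_{t\ge k}\{Y:\Upsilon_t^Y\prec A_t,\ |\Upsilon_t^Y|\ge 2^{k+1}\}$, which automatically captures every $Y$ with $\Upsilon^Y=A$. The whole point of the construction of $A$ is then to keep $\leb(\+U_k)$ small: whenever $\leb(\+U_{k,s})$ threatens to exceed $\cost(k,s)+\leb(\+E_s\setminus\+E_k)$, one enumerates a fresh element of a reserved interval $I_k$ into $A$, which pushes all of $\+U_{k,s}$ into the error set $\+E_{s+1}$. The $\cost$-cost of each such enumeration is bounded by the resulting increase $\leb(\+E_{s+1}\setminus\+E_s)$, so the total cost is at most $\leb(\+E)\le 1$; and since $\leb(\+E\setminus\+E_k)$ is the tail of an additive cost function, \cref{lem:Omega_is_slow} together with $\cost\to\cost_\Omega$ gives the $\cost$-bound on $\+U_k$. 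The active enumeration driven by the test's growth---not any golden-run argument---is the missing idea.
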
 
\begin{proof}
Recall that $\Upsilon$ is a ``universal'' Turing functional in the sense that $\Upsilon^{{0^e1}\conc{X}} = \Phi_e^{X}$ for all~$X$ and~$e$. We build $A$ and a $\cost$-test $\seq{\+ U_k} $ capturing any ML-random $Y$ such that $A = \Upsilon^Y$. This suffices for the theorem by \cref{lem:shift}. The tension in this construction is between trying to capture all reals computing~$A$, and keeping the measure of $\+U_n$ bounded by (a multiple of) $\limcost(n)$. The idea is for us to move~$A$ in case we see that too many oracles compute it. This needs to be done judiciously; we must ensure that~$A$ obeys~$\cost$. The basic idea, as in \cite{Bienvenu.Greenberg.ea:16}, is to charge the cost of changing~$A$ to the increase in the measure of the \emph{error set}, the set of oracles that have already been proven to be incorrect about~$A$. Since $\cost\to \cost_\Omega$, the increase in the error set is bounded by~$\cost$, and so we can catch our tail. 

\smallskip
We proceed to the details. It will be clear from the proof that the construction is uniform in $\cost$.

 We apply the usual language for strings: if $\sss <_L \tau $ we say that $\sss$ lies to the left of $\tau$, and $\tau$ to the right of $\sss$. By delaying computations from appearing in~$\Upsilon$ during the construction, we may assume that for all~$Y$ and~$s$, $\Upsilon^Y_s$ does not lie to the right of~$A_s$. We build a global ``error set'':
\[ \+ E_s = \left\{ Y \,:\, \Upsilon^Y_s \text{ lies to the left of }A_s \right\}.
\]
An enumeration of a number into~$A$ causes~$A$ to move to the right, and so potentially adds elements~$\+E$; no elements can ever leave $\+ E$. The basic idea, again, is that we enumerate a number~$x$ into~$A$ only when the cost $\cost(x,s)$ is smaller than the amount by which the measure of~$\+E$ will be increased.

We will ensure that at every stage~$s$, 
\[ \tag{$\diamond$} \leb (\+ U_{k,s}) \le \cost(k,s) + \leb ( \+ E_{s+1} - \+ E_k). \] 
By \cref{lem:Omega_is_slow}, $\leb (\+ E - \+ E_k) = \limcost_{\leb(\+E)}(k) \le^\times \limcost_\Omega(k)$, so as $\limcost_\Omega(k) \le^\times\limcost(k) $, the test $\seq{\+ U_k} $ is indeed a $\cost$-test. 
We reserve the interval $I_k = [\tp k, \tp{k+1} )$ for ensuring ($\diamond$). 

The construction of the $\cost$-test $\seq{\+ U_k}$ and the c.e.\ set~$A$ is as follows.
At stage $s > k$ we let
\[
	\+V_{k,s} = \left\{ Y \,:\, \Upsilon_s^Y\prec A_s \andd \Upsilon_s^Y \uhr 2^{k+1} \text{ is defined} \right\};
\]
and 
\[
	\+U_{k,s} = \bigcup_{t\in [k,s]} \+V_{k,t}. 
\] 
As $\+V_{k,s}\supseteq \+V_{k+1,s}$, we have $\+U_{k,s}\supseteq \+U_{k+1,s}$ so $\seq{\+U_k}$ is nested. Note that $\+V_{k,s}$ is disjoint from~$\+E_k$, for every $s$, hence $\+U_{k,s}$ is disjoint from~$\+E_k$.

Let $s>k$. We let $x_s = x_s(k) = \min (I_k - A_{s})$. If ($\diamond$) threatens to fail at~$s$, namely $\leb (\+ U_{k,s}) > \cost(k,s) + \leb ( \+ E_s - \+ E_k)$, we enumerate $x_s(k)$ into $A_{s+1}$. This causes $\+ U_{k,s}$ to go into $\+ E_{s+1}$. Since~$\+U_{k,s}$ is disjoint from~$\+E_k$, it follows in this case that $\leb(\+U_{k,s})\le \leb(\+E_{s+1} - \+E_k)$, and so ($\diamond$) holds at stage~$s$. (Now the cycle can repeat: during stages $t \ge s=1$, the class $\+ U_k$ is allowed to add measure up to $\cost(k,t)$ without any action necessary. If the measure added exceeds $\cost(k,t)$ another enumeration into $A$ will be needed.)

\smallskip

First we verify that $x_s$ always exists, that is, we enumerate at most $\tp k$ times for~$\+ U_k$. By \cref{lem:Omega_is_slow}, we may assume that $\cost(x,s) \ge \tp{-x}$ for $x < s$. (To be clear, here we are using the fact that if $\limcost =^\times \underline{\dost}$, then the same sets obey $\cost$ and $\dost$.) If we enumerate $x_s(k)$ into~$A_{s+1}$, then $\leb(\+ U_{k,s}) > 2^{-k} + \leb(\+ E_s - \+E_k)$. Since $\+U_{k,s} \cap \+E_k = \emptyset$, and $\+U_{k,s} \subseteq \+E_{s+1}$, it follows that $\leb(\+E_{s+1} - \+E_s) > 2^{-k}$. Since $\leb (\+E) \le 1$, this can happen at most $2^k$ times.

Recall that for all~$Y$ and~$s$, $\Upsilon^Y_s$ does not lie to the right of~$A_s$. Hence, if $A = \Upsilon^Z$ then $Z \in \bigcap_k \+ U_k$. It remains to verify that $A \models \cost$. If we enumerate $x_s(k)$ into~$A_{s+1}$, then 
\[
\leb (\+ U_{k,s} - \+ E_s) = \leb (\+ U_{k,s} - (\+ E_s - \+ E_k) ) \ge \leb (\+ U_{k,s} ) - \leb(\+ E_s - \+ E_k) > \cost(k,s ) \ge \cost (x,s).
\]
Since $ \+ U_{k,s} - \+ E_s \sub \+ E_{s+1} - \+ E_s$, we see that $\cost(x,s) < \leb(\+E_{s+1} - \+E_s)$. This implies that the total cost of the enumeration of $A$ is at most $\leb(\+E)\le 1$.
\end{proof}

ML-completeness for a cost function was defined in~\ref{def:MLComp}.
\begin{cor} \label{cor:smart ML complete} Suppose that $\cost$ is a cost function such that $\cost \to \cost_\Omega$. Let $A $ be a $\DII$~set. Then 
$A$ is smart for $\cost$ $\LR$ $A$ is ML-complete for $\cost$.
\end{cor} 
\begin{proof} 
($\RA$) Suppose that $A\leT Y$ for ML-random $Y$. Then some $\cost$-bounded test captures~$Y$. If $B \models \cost$, then $B \leT Y$ by the basic fact, \cref{prop:basic fact}.

\noindent ($\LA$) Let $\wt A$ be smart for $\cost$. If $A \leT Y$ for ML-random $Y$, then $\wt A \leT Y$, so $Y$ is captured by a $\cost$-bounded test.
\end{proof}

In particular, the ML-degree of a smart set $A$ for $\cost$ is uniquely determined by~$\cost$. In contrast, for each low c.e.\ set $A$, there is a c.e.\ set $B \not \leT A$ such that $B \models \cost$ \cite[5.3.22]{Nies:book}. If $A$ is smart for $\cost$, then $A \oplus B$ is also smart for $\cost$. As every $K$-trivial is low, the Turing degree of a set $A$ that is smart for $\cost$ is never uniquely determined by $\cost$.

\section{Each \texorpdfstring{$K$}{K}-trivial set is ML-complete for a cost function}
\label{sec:strongest}

Given a $K$-trivial set~$A$, we will define a cost function $\cost_{(A)}$ with $A \models \cost_{(A)}$ such that every random computing $A$ is captured by a $\cost_{(A)}$ test. In other words, we build $\cost_{(A)}$ in such a way that $A$ is smart for $\cost_{(A)}$. Furthermore, in case that $A$ is c.e., $\cost_{(A)}$ is the strongest cost function that $A$ obeys, in the sense that if $A\models\cost$, then $\cost_{(A)}\to\cost$. In the introduction we mentioned applications of this to the structure of ML-degree of $K$-trivials, such as showing that there is no minimal pair.

We will provide an example showing that $\cost_{(A)}$ may not behave in an overly nice way. We build a c.e.\ $K$-trivial~$A$ such that the class of sets obeying $\cost_{(A)}$ is not closed downward under $\leT$. In fact, in our example $T(A)\not\models \cost_{(A)}$, where $T(A)$ is the shift of $A$, obtained by deleting the first bit. 

As before, $\Upsilon$ denotes a universal Turing functional. Let~$A$ be $K$-trivial. The idea for defining $\cost_{(A)}$ is as follows. Suppose first that~$A$ is c.e., and let~$\seq{A_s}$ be an effective enumeration of~$A$ that obeys~$\cost_\Omega$ \cite{Nies:CalculusOfCostFunctions}. We want to define $\cost_{(A)}$ so that we can capture by a $\cost_{(A)}$-bounded test all the reals~$Z$ such that $\Upsilon^Z = A$. The natural test is $\+U_k = \bigcup_{s\ge k} \Upsilon_s^{-1}[A_s\rest{k}]$. So we define $\limcost_A(k) = \leb(\+U_k)$. Why does~$A$ obey this cost function? Since the approximation is left-c.e., $A$ does not have to pay for the measure of the oracles that compute $A\rest{k}$ correctly: these only appear after~$A\rest{k}$ has settled, and after that, all changes to $A$ are beyond~$k$. So $A$ only needs to pay for the measure of those reals~$Z$ that compute an incorrect version $A_s\rest{k}$. This price is bounded by the increase of the measure of the error set: those oracles that compute some string to the left of~$A$. Thus the total $A$-cost is the same as the total $A$-cost of an additive cost function, and hence bounded by the total $\cost_\Omega$-cost of this enumeration; but this was chosen to be finite. 

When $A$ is not c.e., we use a c.e.\ intermediary. Let us give the details of the definition. By~\cref{thm:ML_degrees_ce_generated}, fix a c.e.\ set $C\equiv_\ML A$ that computes~$A$; let~$\Psi$ be a Turing functional such that $A = \Psi^C$. Fix an enumeration $\seq{C_s}$ of~$C$ and an approximation $\seq{A_s}$ of~$A$ that witnesses $A \models \cost_\Omega$. By speeding up both $\Psi$ and our approximations, we may assume that $A_s\uhr{s} \preceq \Psi_s^{C_s}$ for every $s$. To unify our construction with the earlier discussion, we assume that if $A$ is c.e., then $C = A$ and $\Psi$ is the natural reduction with identity bounded use.

Similarly to what we did above, we let 
\[
	\+E_s = \left\{ Y \,:\, \Upsilon^Y_s \text{ lies to the left of }C_s \right\}
\]
and
\[
	\+V_{x,s} = \left\{ Y \,:\, \Upsilon_s^Y\prec C_s \andd A_s\rest{x+1} \preceq \Psi_s^{\Upsilon_s^Y} \right\};
\]
we then let
\[
	\cost_{(A)}(x,s) = \leb \left( \bigcup_{x<t\le s} \+V_{x,t} \right).
\]
Note that $\cost_{(A)}$ is monotonic, as $\+V_{x,t}\supseteq \+V_{x+1,t}$. It satisfies the limit condition if~$A$ is non-computable: certainly for all~$x$, $\limcost_A(x)\le 1$. If $A\rest{k}$ has stabilised by stage~$s$, then $\limcost_A(s)\le \leb \left\{ Y \,:\, A\rest{k} \preceq \Psi^{\Upsilon^Y} \right\}$. Hence $\lim_x\limcost_A(x) \le \leb \left\{ Y \,:\, A = \Psi^{\Upsilon^Y} \right\}$; if~$A$ is non-computable, this is 0.

\begin{proposition} \label{prop:A_is_smart_for_c_A}
$A$ is smart for~$\cost_{(A)}$. 
\end{proposition}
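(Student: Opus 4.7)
The plan is to establish both clauses of smartness: $A \models \cost_A$, and the equivalence, for each ML-random $Y$, between $Y$ being captured by a $\cost_A$-bounded test and $A \leT Y$. The direction ``$Y$ captured $\Rightarrow A \leT Y$'' is immediate from \cref{prop:basic fact} once obedience is in hand, so the real content is the obedience clause together with the construction of a capturing test for every ML-random that computes $A$.

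For the test direction, suppose $A \leT Y$ for an ML-random $Y$. Since $A \equiv_{\ML} C$, we also have $C \leT Y$, so I can fix $e$ with $C = \Phi_e^Y$ and set $W = 0^e 1 \conc Y$; then $\Upsilon^W = C$. I take the test $\+U_k = \bigcup_{s > k} \+V_{k,s}$: it is uniformly $\Sigma^0_1$, nested because $\+V_{k+1,s} \subseteq \+V_{k,s}$, and by construction $\leb(\+U_k) = \lim_s \cost_A(k,s) = \underline{\cost_A}(k)$, so $\seq{\+U_k}$ is a $\cost_A$-bounded test. It captures $W$: at all sufficiently large $s$, $A_s\rest{k+1}$ has settled to $A\rest{k+1}$, $\Upsilon^W_s$ is a long prefix of $C_s$, and $\Psi^{\Upsilon^W_s}_s \succeq A\rest{k+1}$, placing $W$ in $\+V_{k,s} \subseteq \+U_k$. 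Iterated application of \cref{lem:shift} then peels off the $e+1$ leading bits of $W$ to yield a $\cost_A$-bounded test capturing $Y$ itself.

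For the obedience clause I show $\underline{\cost_A}(x) \le^\times \underline{\cost_\Omega}(x)$; by Thm 3.4 of \cite{Nies:CalculusOfCostFunctions} this gives $\cost_\Omega \to \cost_A$, and since $A$ is $K$-trivial we have $A \models \cost_\Omega$, hence $A \models \cost_A$. The key geometric fact is that $\bigcup_{t > x} \+V_{x,t}$ is disjoint from $\+E_x$: any $Y \in \+V_{x,t}$ has $\Upsilon^Y_t \prec C_t$, and the monotone growth of $\Upsilon^Y$ together with the lex-monotone growth of $C_s$ prevents $\Upsilon^Y_x$ from lying lex-left of $C_x$. Writing $\alpha = \leb(\+E_\infty) \le 1$, the portion of $\bigcup_{t > x} \+V_{x,t}$ contained in $\+E_\infty$ has measure at most $\alpha - \alpha_x = \underline{\cost_\alpha}(x) \le^\times \underline{\cost_\Omega}(x)$ by \cref{lem:Omega_is_slow}. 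Oracles outside $\+E_\infty$ must have $\Upsilon^Y$ converging to a (possibly finite) prefix $\sigma$ of $C$; then $\Psi^\sigma$ is a prefix of $\Psi^C = A$, and since $\leb\{Y : \Psi^{\Upsilon^Y} = A\} = 0$ for non-computable $A$ (the only interesting case), the measure of the set of such oracles with $\Psi^\sigma \succeq A\rest{x+1}$ tends to $0$, and through the c.e.\ structure of $C$ is also majorised by a constant multiple of $\underline{\cost_\Omega}(x)$.

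The main obstacle is this last bound on the ``non-error'' portion in the general case, where the reduction $\Psi$ separates the c.e.\ information of $C$ from the $\Delta^0_2$ information of $A$. The c.e.\ case $C = A$ with $\Psi$ the identity is the transparent one: oracles computing $A\rest{x+1}$ correctly only appear after $A\rest{x+1}$ has stabilised, after which no change at a bit $\le x$ occurs, exactly as sketched in the preamble, and the $\cost_A$-cost telescopes against the growth of $\leb(\+E)$ as an additive cost function. The general case needs the speed-up $A_s\rest{s} \preceq \Psi^{C_s}_s$ to transfer this telescoping accountability through $\Psi$, leveraging that $C$ is itself c.e.\ and of the same ML-degree as $A$.
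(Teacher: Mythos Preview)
Your test direction is fine and matches the paper.

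The obedience direction has a genuine gap. You claim $\underline{\cost_A}(x) \le^\times \underline{\cost_\Omega}(x)$, i.e., $\cost_\Omega \to \cost_A$. This is too strong: it would say that \emph{every} $K$-trivial obeys $\cost_A$, and combined with smartness (which the proposition establishes) would force every $K$-trivial to be $\le_{\ML} A$. That fails whenever $A$ is not ML-complete among the $K$-trivials. Concretely, in the c.e.\ case the ``non-error'' portion of $\bigcup_{t>x}\+V_{x,t}$ contains every $Y$ with $\Upsilon^Y \succeq A\rest{x+1}$; this set has measure of order $2^{-K(A\rest{x+1})}\approx 2^{-K(x)}$ for $K$-trivial $A$, which is not $\le^\times \Omega-\Omega_x$. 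Your appeal to ``the c.e.\ structure of $C$'' does not supply the missing bound, and your informal c.e.\ sketch (``oracles computing $A\rest{x+1}$ correctly only appear after $A\rest{x+1}$ has stabilised, after which no change at a bit $\le x$ occurs'') is actually an argument about \emph{change stages}, not about $\underline{\cost_A}(x)$.

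The paper does not bound $\underline{\cost_A}$. It shows directly that the fixed approximation $\seq{A_s}$ witnesses $A\models\cost_A$ by bounding $\cost_A(x,s)$ \emph{only at stages $s$ with $A_s(x)\ne A_{s+1}(x)$}. The point is that such a change forces $\bigcup_{x<t\le s}\+V_{x,t}\subseteq \+E_{s+1}\setminus\+E_{x+1}$: for $Y\in\+V_{x,t}$ one has $A_t\rest{x+1}\preceq\Psi_t^{\Upsilon^Y_t}$, and since $A_t\rest{x+1}\ne A_{s+1}\rest{x+1}$ the string $\Upsilon^Y_t$ must lie left of $C_{s+1}$. Hence at change stages $\cost_A(x,s)\le \cost_\epsilon(x,s)$ for the additive cost function $\cost_\epsilon$ given by $\epsilon_s=\leb(\+E_{s+1})$, so $\cost_A\seq{A_s}\le \cost_\epsilon\seq{A_s}\le^\times \cost_\Omega\seq{A_s}<\infty$. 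The ``non-error'' oracles simply do not enter this accounting, because they appear only after the relevant bits have settled and thus never contribute to a change-stage cost.
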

\begin{proof}
First we show that~$A$ obeys $\cost_{(A)}$. In fact, the fixed approximation $\seq{A_s}$ witnesses this. Define an increasing approximation of the left-c.e.\ ``error real'' by
\[
\epsilon_s = \leb \left(\+ E_{s+1}\right).
\]
Suppose that $A_s(x) \neq A_{s+1}(x)$. For each $t\in (x,s]$ and every $Y\in \+V_{x,t}$, $\Upsilon_t^Y$ lies to the left of $C_{s+1}$, and so $Y\in \+E_{s+1}$; on the other hand $Y\notin \+E_t$ and so $Y\notin \+E_{x+1}$. It follows that \[
	\cost_{(A)}(x,s) \le \leb \left(\+E_{s+1}\setminus \+E_{x+1}\right) = \cost_{\epsilon}(x,s).
\]
By \cref{lem:Omega_is_slow}, $\cost_\Omega\to \cost_\epsilon$, and so 
\[
	\cost_{(A)} \seq{A_s} \le \cost_\epsilon \seq{A_s} \le^\times \cost_\Omega\seq{A_s},
\]
and we assumed that the latter is finite. 

\smallskip

Next we show that every random real that computes~$A$ is captured by some $\cost_{(A)}$-bounded test. Since $C\le_\ML A$, every such real computes~$C$. By \cref{lem:shift}, it suffices to build a $\cost_{(A)}$-test capturing any random~$Y$ such that $C = \Upsilon^Y$. The desired test is the test $\+U_k = \bigcup_{s>k} \+V_{k,s}$ defined above. (Again, we assume that we delay computations, so for all~$Y$ and~$s$, $\Upsilon^Y_s$ does not lie to the right of~$C_s$.)
\end{proof}

As promised, in the case that $A$ is c.e., $\cost_{(A)}$ is the strongest cost function that $A$ obeys. In particular, $\cost_{(A)} \to \cost_\Omega$.

\begin{proposition} \label{prop:c_A_is_strongest}
Suppose that~$A$ is c.e.\ For any cost function $\cost$ such that~$A\models\cost$, we have $\cost_{(A)} \to \cost$.
\end{proposition}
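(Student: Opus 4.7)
The plan is to establish the conclusion by verifying the equivalent inequality $\underline{\cost} \leq^\times \underline{\cost_A}$ furnished by Theorem 3.4 of Nies. Since the enumeration $\seq{A_s}$ used to define $\cost_A$ is already a c.e.\ enumeration of $A$, and since by hypothesis $A$ obeys $\cost$ via \emph{some} approximation, a routine computable speed-up argument lets me assume that this same enumeration witnesses $A \models \cost$; that is, $\sum_s \cost(x_s, s) < \infty$, where $x_s$ is the (unique, since $A$ is c.e.) bit entering $A$ at stage $s$.

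The heart of the argument is an oracle Kraft–Chaitin construction. First I rescale $\cost$ by a constant so that $\sum_s \cost(x_s,s) \leq 1$. Then, at each stage $s$ with $x_s$ defined, I reserve a subset $T_s \subseteq 2^\omega$ of Lebesgue measure $\cost(x_s, s)$, disjoint from all previously reserved $T_t$'s, and declare an oracle functional $M$ with $M^Y_s = A_s \uhr s$ for every $Y \in T_s$. The universality of $\Upsilon$ furnishes an index $e$ with $\Upsilon^{0^e 1 \cdot Y} = M^Y$. Since $A$ is c.e., whenever no bit below $s$ is enumerated into $A$ after stage $s$, the finite string $A_s \uhr s$ is a true initial segment of $A$; for such a ``stable'' stage $s$ and any $x < s$, the oracles $0^e 1 \cdot Y$ with $Y \in T_s$ all lie in $\+V_{x, t}$ for sufficiently large $t$, and hence in $\+U_x$. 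This gives
\[
\limcost_A(x) \;=\; \leb(\+U_x) \;\geq\; 2^{-e-1} \sum_{s > x,\ s \text{ stable}} \cost(x_s, s).
\]

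The final step is to show that the right-hand side multiplicatively dominates $\limcost(x)$, and this is where I expect the main difficulty. The straightforward bound from the KC request weights controls only $\sum_s \cost(x_s, s)$, whereas $\limcost(x) = \lim_s \cost(x,s)$ is a separate quantity. To bridge them, I would refine the construction so that at every stage $s$ and every $x < x_s$ the machine $M$ allocates additional weight $\cost(x,s) - \cost(x,s-1)$ on a fresh disjoint piece producing $A_s \uhr{x+1}$; the total weight spent across all such $(x,s)$ increments is bounded above, via monotonicity of $\cost$ and the hypothesis $A \models \cost$, by (a constant multiple of) $\sum_s \cost(x_s, s) < \infty$, so the Kraft–Chaitin inequality still applies. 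This refined accounting, combined with the density of stable stages guaranteed by the c.e.-ness of $A$, should yield the desired bound $\limcost(x) \leq^\times \limcost_A(x)$, completing the proof. The essential use of $A$ being c.e.\ is in guaranteeing enough stable stages; the subsequent example showing $T(A) \not\models \cost_A$ illustrates that such stability is exactly what fails in the non-c.e.\ setting.
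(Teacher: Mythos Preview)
Your high-level strategy is the same as the paper's: build a functional $M$ so that, via the universal $\Upsilon$, enough oracles compute $A\uhr{x+1}$ to witness $\limcost_A(x)\ge^\times\limcost(x)$. The gap is in your refinement.

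The claim that the total weight of your ``fresh disjoint pieces'' is bounded by a constant multiple of $\sum_s \cost(x_s,s)$ is false. Consider a cost function with $\limcost(x)=1/(x+2)$, arranged so that $\cost(x,\cdot)$ jumps from $0$ to $1/(x+2)$ at a single stage $s_x$, and enumerate $A$ so that at each stage $s_x$ some large fresh number enters (hence $x<x_{s_x}$ while $\cost(x_{s_x},s_x)=0$). Then $\sum_s\cost(x_s,s)=0$, yet your increments sum to $\sum_x 1/(x+2)=\infty$. The underlying problem is structural: by allocating \emph{disjoint} mass for each length $x$, your total weight is essentially $\sum_x\limcost(x)$, and there is no reason for this series to converge. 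Monotonicity of $\cost$ and the hypothesis $A\models\cost$ simply do not control that sum.

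The paper's proof fixes exactly this by building a \emph{nested} Turing functional $\Gamma$ rather than a Kraft--Chaitin request set: at every stage $t$ one maintains
\[
\leb\bigl(\{Y:A_{f(t)}\uhr{x+1}\prec\Gamma^Y_t\}\setminus\+E_{\Gamma,t}\bigr)=\cost(x,t)
\]
simultaneously for all $x$. Because outputs extend one another, the live (non-error) measure at any moment is just $\cost(0,t)\le\limcost(0)$; the only additional mass ever spent is what falls into the error set $\+E_\Gamma$, and that is bounded by the total enumeration cost $\cost\seq{A_{f(s)}}$. After rescaling so that both quantities are below $1/2$, the whole construction fits inside measure $1$, and one reads off $\leb\{Y:A\uhr{x+1}\preceq\Gamma^Y\}\ge\limcost(x)$ directly. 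The missing idea in your argument is precisely this nesting: let a single functional compute longer and longer initial segments on the \emph{same} oracles, rather than paying separately at each length.
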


\begin{proof}
After multiplying $\cost$ by a constant, we may assume that $\limcost(0) < 1/2$. Fix a computable speed-up $f$ such that $\cost\seq{A_{f(s)}} < 1/2$ (again, see \cite{Nies:CalculusOfCostFunctions}). Define a Turing functional~$\Gamma$ such that at every stage $t$,
\[
\leb\left(\{Y : A_{f(t)}\uhr{x+1} \prec \Gamma^Y_{t} \} - \+E_{\Gamma,t}\right) = \cost(x,t),
\] where $\+E_{\Gamma,t} = \{ Y : \Gamma^Y_t \text{ lies to the left of }A_{f(t)}\}$. By a simple argument $\leb(\+E_{\Gamma,t}) \le \cost\seq{A_{f(s)}} < 1/2$ for every $t$, so this construction may proceed.

Fix $e$ with $\Phi_e = \Gamma$. Then
\begin{align*}
\limcost_A(x) &= \leb\left( \bigcup_{x < t} \+V_{x,t}\right)\\
&= \leb\left( \bigcup_{x < t} \left\{ Y : A_t\uhr{x+1} \preceq \Upsilon_t^Y\right\}\right)\\
&\ge \leb\left\{ Y : A\uhr{x+1} \preceq \Upsilon^Y\right\}\\
&\ge 2^{-(e+1)}\cdot \leb\left\{ Y : A\uhr{x+1} \preceq \Upsilon^{0^e1\conc Y}\right\}\\
&\ge 2^{-(e+1)} \limcost(x).\qedhere
\end{align*}
\end{proof}
We provide the promised applications to the ML-degrees.
\begin{cor} \label{rem: ML structure} 
\noindent (a) There is no minimal pair in the ML-degrees of $K$-trivials.
 
\noindent (b) The  ML-degree of a noncomputable $K$-trivial  never contains a maximal Turing degree.
\end{cor}
\begin{proof} (a) Given noncomputable $K$-trivials $A,B$, let $D$ be a noncomputable set obeying the cost function $\cost_{(A)} + \cost_{(B)}$. Then $D \le_\ML A,B$ by Proposition~\ref{prop:basic fact}. 

(b) Suppose $A$ is in the ML-degree. By \cref{thm:ML_degrees_ce_generated} we may assume that $A$ is  c.e.  Some  c.e.\ $K$-trivial $B\not \leT A$   obeys $\cost_{(A)}$ by~\cite[5.3.22]{Nies:book}; then $A \oplus B \equiv_\ML A$. \end{proof}

Recall that $T(A)$ is the shift of $A$, which is obtained by deleting the first bit.

\begin{thm} \label{thm:shift}
For every cost function $\dost$ there is a cost function $\cost \ge \dost$ and a c.e.\ set $A$ such that $A \models \cost$ and $T(A) \not \models \cost$. 
\end{thm}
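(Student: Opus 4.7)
The plan is to construct a c.e.\ set $A$ and a cost function $\cost \ge \dost$ by stages using a finite-injury priority argument. First, using the limit condition of $\dost$, fix a strictly increasing computable sequence $x_1 < x_2 < \cdots$ with $\underline{\dost}(x_n) \le 2^{-n}$; only these positions will ever enter $A$. Define the cost function by
\[
  \cost(y,s) \;=\; \max\Bigl\{\dost(y,s),\ \sup\bigl\{1/n : r_n \le s,\ x_n - 1 \ge y\bigr\}\Bigr\},
\]
where each ``bump stage'' $r_n \in \N \cup \{\infty\}$ is selected during the construction (with $r_n = \infty$ if the bump never fires). Pointwise monotonicity in both arguments is immediate from the max/sup form. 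Because $x_n \to \infty$, the quantity $n^*(y) := \min\{n : x_n - 1 \ge y\}$ tends to infinity as $y\to\infty$, forcing $\underline{\cost}(y) \le \max\{\underline{\dost}(y),\,1/n^*(y)\} \to 0$, so the limit condition holds; and $\cost \ge \dost$ is automatic.

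For each $e$, fix a dedicated infinite computable set of follower indices $I_e \subseteq \N$ with $\min I_e \ge 2^e$, and attach the requirement $R_e$: the $e$-th computable $\Delta^0_2$ sequence $\Phi_e$ is not a $\cost$-bounded approximation of $T(A)$. The module for $R_e$ at follower $n \in I_e$ waits for a stage $s$ with $\Phi_{e,s}(x_n - 1) = 0$; when such $s$ appears, it enumerates $x_n$ into $A$ at $s$ and simultaneously sets $r_n := s$. A valid $\Phi_e$ is then forced to later transition at $x_n - 1$ from $0$ to $1$ (to match $T(A)(x_n - 1) = 1$), and the freshly-activated bump taxes that transition by at least $1/n$. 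If no such stage $s$ ever appears, then eventually $\Phi_{e,s}(x_n - 1) = 1$ while $T(A)(x_n - 1) = 0$, so $\Phi_e$ is permanently wrong at $x_n - 1$. To prevent $\Phi_e$ from batching many transitions at a single stage and thereby consolidating the $\ge 1/n$ costs into a single stage-count, $R_e$ waits, after enumerating $x_n$, for $\Phi_e$ to actually register the transition before advancing to the next follower; this forces the transition stages for distinct followers of $R_e$ to be distinct, so the individual $\ge 1/n$ contributions really do accumulate stage by stage in the total $\cost$-cost. Either cofinitely many followers in $I_e$ each contribute $\ge 1/n$, yielding $\sum_{n\in I_e} 1/n = \infty$, or $\Phi_e$ is wrong at infinitely many positions and thus fails to approximate $T(A)$.

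The main obstacle is preserving $A \models \cost$: an early activation $r_m \le t_n$ with $m > n$ (possible because we set $r_m := s$ as soon as $\Phi_e$ offers a ``$0$'' stage for $x_m - 1$, which may well be before the enumeration of $x_n$) inflates $\cost(x_n, t_n)$ by $1/m$ through the sup term, since $x_m - 1 \ge x_n$. The priority ordering handles this: requirements are processed in priority order $R_0, R_1, \ldots$, and a lower-priority $R_{e'}$ is blocked from activating any bump $r_m$ whose firing would disturb an unprocessed higher-priority follower of smaller index. Combined with the spacing $\min I_e \ge 2^e$, this bounds the cumulative contribution of admissible early bumps at any $\cost(x_n, t_n)$ by a geometric tail, so the natural enumeration of $A$ has total $\cost$-cost $O(\sum_n 2^{-n}) < \infty$, giving $A \models \cost$. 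Standard finite-injury bookkeeping (each requirement abandons only finitely many followers before stabilising) then completes the verification that $A$ is c.e., $\cost$ is a valid cost function with $\cost \ge \dost$, $A \models \cost$, and $T(A) \not\models \cost$.
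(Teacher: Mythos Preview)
Your proof sketch has a genuine gap in the verification that $A \models \cost$, and the claimed ``finite-injury bookkeeping'' does not hold as stated.

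The problem is that each $R_e$ may act \emph{infinitely often}: you explicitly rely on cofinitely many followers $n \in I_e$ contributing $1/n$ in order to force $\sum_{n\in I_e}1/n=\infty$ onto a correct $\Phi_e$. Now consider a lower-priority $R_{e'}$ enumerating $x_n$ at stage $t_n$. Your blocking mechanism prevents \emph{lower}-priority strategies from disturbing $R_{e'}$, but it does nothing about bumps set by a \emph{higher}-priority $R_e$ with $e<e'$. If such an $R_e$ has already processed a follower $m>n$ with $m\in I_e$, then $r_m\le t_n$ and $x_m-1\ge x_n$, so the sup term forces $\cost(x_n,t_n)\ge 1/m$. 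In the worst case (take, e.g., $I_0$ the odd indices, $I_1$ the even ones, and let $R_0$ stay ahead of $R_1$), the relevant $m$ is $n+1$, so every $R_1$-enumeration costs at least $1/(n+1)$ and the total cost to $A$ from $R_1$ alone is $\sum_{n\in I_1}1/(n+1)=\infty$. Your claimed bound $O(\sum_n 2^{-n})$ only accounts for the $\dost$-part and the ``admissible'' lower-priority bumps; it ignores these higher-priority contributions. Conversely, if you strengthen the blocking so that $R_{e'}$ waits until no higher-priority bump $r_m$ with $m>n$ is present (or keeps choosing fresh followers above all such $m$), then either $R_{e'}$ can be blocked forever (when a higher-priority strategy is stuck waiting for a ``$0$'' that never appears), or $R_{e'}$'s followers become so sparse that $\sum 1/n$ over them converges and the requirement is not met. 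Either way the tension between ``$R_e$ must act infinitely often'' and ``$R_e$'s bumps must not poison other strategies'' is unresolved.

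The paper avoids this entirely by arranging that each $R_e$ acts only \emph{boundedly} often. Rather than decreasing bump values $1/n$, strategy $R_e$ declares a fixed large value $\alpha_e$ at the position just below its follower and a much smaller value $2^{-e}\alpha_e$ at the follower itself, then enumerates the follower. The opponent is charged $\alpha_e$ per cycle, so after at most $1/\alpha_e$ cycles its total cost exceeds $1$ and $R_e$ halts; meanwhile $A$ pays only $2^{-e}\alpha_e$ per cycle, totalling at most $2^{-e}$. The bounds $\alpha_{e+1}=2^{-e}\alpha_e$ ensure that lower-priority declarations never exceed the small threshold at $R_e$'s follower, and higher-priority strategies injure $R_e$ only finitely often because they themselves act boundedly often. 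This is precisely the mechanism your sketch is missing.
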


Since $\cost_{(A)} \to \cost$, this shows that $T(A) \not \models \cost_{(A)}$. Thus $\cost_{(T(A))} \not \to \cost_{(A)}$. 
In contrast, for each ML-random $Y$,
\smallskip
\begin{center} $Y$ fails some $\cost_{(A)}$ test $\LR$ $Y \geT A$ $\LR$ $Y \geT T(A)$ $\LR$ $Y$ fails some $\cost_{(T(A))} $-test \end{center} 
\smallskip
(see \cref{def:cost-bounded_test} for $\cost$-tests). So we have a pair of inequivalent cost functions that determine the same randomness notion.
 
\begin{proof}
The main idea is to enumerate the set~$A$ and the cost function~$\cost$ so that it has ``sudden drops'': numbers~$x$ with $\limcost(x)$ much smaller than $\limcost({x-1})$.

Let $\seq{B^0_t}, \seq{B^1_t}, \dots$ be a listing of all (possibly partial) computable enumerations. In particular, let $\seq{D_n}$ be an effective listing of the finite sets, and let $B^e_{t+1} = B^e_t \cup D_{\phi_e(t+1)}$, where defined.

At a stage $s$, we may declare $\cost(s-1,s) \ge \alpha$ for some dyadic rational $\alpha$, which by monotonicity entails that $\cost(y,t) \ge \alpha$ for each $y < s$ and $t \ge s$. At the end of stage $s$, we will define $\cost(x,s)$ for every $x < s$ to be the least value consistent with all of our declarations and also with $\cost(x,s) \ge \dost(x,s)$.

We must meet the global requirements that $\cost$ has the limit condition and that $A \models \cost$. We must also meet the requirements
\[
R_e \colon \, T(A) = \bigcup_t B^e_t \Rightarrow \cost\seq{B^e_t} \ge 1.
\]

The strategy for $R_e$ seeks to find an $x$ and an $s$ where $\cost(x-1,s)$ is large and $x-1\not \in B^e_s$. Then it enumerates $x$ into $A$ and waits until it sees a $t > s$ with $x-1 \in B^e_t$. This will increase $\cost\seq{B^e_t}$ by at least $\cost(x-1,s)$. Then the strategy seeks to repeat the process with a new $x$, continuing until $\cost\seq{B^e_t} \ge 1$.

To ensure that $\cost$ has the limit condition, we will give $R_e$ a bound $\alpha_e$ beyond which it is not allowed to increase $\cost$. This bound will also ensure that $R_e$ does not interfere with $R_{e'}$ for $e' < e$. To ensure that $A \models \cost$, we will not allow $R_e$ to cause enumerations with total cost exceeding $2^{-e}$. Other than a discussion of $\alpha_e$, our full strategy for $R_e$ is:
\begin{enumerate}
\item Let $s$ be the current stage. Declare $\cost(s-1,s) \ge \alpha_e$.
\item At stage $s+1$, declare $\cost(s,s+1) \ge 2^{-e}\cdot\alpha_e$.
\item Wait for a stage $u > s$ when one of the following happens:
\begin{enumerate}
\item If $\cost(s,u) > 2^{-e}\cdot\alpha_e$, return to Step (1).
\item If $s$ is enumerated into $A$, return to Step (1).
\item If $B^e_s$ converges with $s-1 \not \in B^e_s$, enumerate $s$ into $A$ and proceed to Step (4).
\end{enumerate}
\item Wait until $B^e_r$ converges for some $r > s$ with $s-1 \in B^e_r$.
\item If $\cost\seq{B^e_t}_{t = 0}^r \ge 1$, terminate the strategy. Otherwise, return to Step (1).
\end{enumerate}
Note that case (3a) might occur because of the actions of some other strategy, or might instead occur because of $\cost(s,u) \ge \dost(s,u)$. The latter can occur only finitely many times, because $\dost$ satisfies the limit condition.

Note also that if we reach Step (5), then $s-1 \not \in B^e_s$, $s-1 \in B^e_r$, and $\cost(s-1,s) \ge \alpha_e$, so $\cost\seq{B_t^e}_{t = 0}^r - \cost\seq{B_t^e}_{t = 0}^s \ge \alpha_e$. Thus we will reach Step (5) at most $1/\alpha_e$ times before meeting the requirement and terminating the strategy. Each enumeration has a cost of $2^{-e}\cdot \alpha_e$ by construction, and so the total cost of enumerations by this strategy is at most $2^{-e}$.

If the strategy waits forever at Step (3), then either $\seq{B^e_t}$ is partial, or $s \not \in A$ but $s-1 \in \bigcup_t B_t^e$, meaning we satisfy $R_e$ by negating the hypothesis. It thus remains only to show that we do not return to Step (1) via case (3a) or (3b) infinitely many times.

We wish to ensure that no $R_{e'}$-strategy for $e' > e$ can increase $\cost(s,u)$ beyond $2^{-e}\cdot \alpha_e$. So we define $\alpha_0 = 1$, $\alpha_{e+1} = 2^{-e}\cdot \alpha_e$. Now case (3a) cannot be caused by the action of any $R_{e'}$-strategy for $e' > e$. Nor can case (3b), because of our action at Step (2). It is then a simple induction that no strategy returns to Step (1) more than finitely many times.
\end{proof}

\section{\texorpdfstring{$K$}{K}-trivial sets Turing below fragments of~\texorpdfstring{$\Omega$}{Omega}}
\label{sec:comparability_of_fragments}

%
%
%

\noindent\emph{Previous research.} 
We begin by discussing in some detail a theorem that motivated the present results. For a set $Z \sub \NN$ thought of as a bit sequence and an infinite set $R\sub \NN$, we denote by $Z_R$ the sequence obtained by erasing the bits of~$Z$ in locations outside of~$R$. If $1\le j \le n$, then the $j\tth$ \emph{$n$-column} of~$Z$ is $Z_{(j-1+n\Nat)}$. 
A set~$A$ is a \emph{$k/n$-base} if it is computable from the join of any~$k$ of the $n$-columns of some random sequence~$X$, in all possible ways. 

For a computable real $p$ such that $0< p \le 1$, let $\cost_{\Omega,p}(x,s) = (\Omega_s-\Omega_x)^p$. As mentioned in the introduction, three of the authors of the present paper proved:

\begin{theorem}[\cite{SubclassesPaper}] \label{thm:k_n-bases}
The following are equivalent for a set~$A$ and $1\le k< n$:
\begin{enumerate}
\item $A$ is a $k/n$-base.
\item $A$ is a $k/n$-base witnessed by $\Omega$, i.e., it is computable from the join of \emph{any}~$k$ of the $n$-columns of~$\Omega$.
\item $A$ obeys $\cost_{\Omega,k/n}$. 
\end{enumerate}
\end{theorem}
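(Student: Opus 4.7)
The implication $(1) \Rightarrow (2)$ is immediate since $\Omega$ is ML-random, so my plan focuses on $(3) \Rightarrow (1)$ and $(2) \Rightarrow (3)$. Both directions should be viewed as product-measure extensions of \cref{prop:basic fact}, indexed by the decomposition of a random sequence into its $n$-columns.

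For $(3) \Rightarrow (1)$: Given $A \models \cost_{\Omega,k/n}$ and an arbitrary ML-random sequence $X$, fix a $k$-subset $J \subseteq \{1,\ldots,n\}$ and write $Y_J = \bigoplus_{j \in J} X_j$ where $X_j$ is the $j$\tth\ $n$-column of $X$; the aim is to show $Y_J \geT A$. Fix a witnessing approximation $\seq{A_s}$ of $A$. For each candidate functional $\Phi_e$ which might reduce $A$ to $Y_J$, define $V_m^{J,e}$ to be the set of random $X$'s for which $\Phi_e^{Y_J}$ is shown by the approximation to disagree with $A$ on some bit $\ge m$; assemble these uniformly into a nested test $\seq{V_m}$. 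The crucial estimate is that at a stage $s$ with a change in $A$ at a least bit $x$, the Lebesgue measure of $X$'s whose $J$-projection currently carries a convergent $\Phi_e$-computation extending $A_s\uhr{x+1}$ is at most proportional to $\cost_\Omega(x,s)^{k/n}$. This is the H\"older-type heart of the argument: because the $n$-columns of $X$ are independent under Lebesgue measure, a power-mean comparison across the $\binom{n}{k}$ possible $k$-joins produces the $k/n$ exponent. Summing over stages yields $\leb(V_m) \le^\times \cost_{\Omega,k/n}(m)$, so $X$ (being random) escapes the test and some $\Phi_e$ must compute $A$ from $Y_J$.

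For $(2) \Rightarrow (3)$: Assume that for every $k$-subset $J$, the $J$-join of the $n$-columns of $\Omega$ computes $A$ via some reduction $\Psi_J$. The plan is to produce an approximation $\seq{A_s}$ of $A$ with $\cost_{\Omega,k/n}\seq{A_s}$ finite by running all $\binom{n}{k}$ reductions $\Psi_J$ in parallel on a computable enumeration of $\Omega$. On the product space $(2^\omega)^n$, let $G_{m,s}$ be the set of $n$-tuples on which every $\Psi_J$ has converged by stage $s$ on an initial segment extending $A_s \uhr{m+1}$. The column decomposition of $\Omega$ eventually lies in $G_{m,s}$, and the Lebesgue measure of the complement is controlled by the left-c.e.\ ``error'' class associated with the fixed approximation of $\Omega$ (each retraction of a $\Psi_J$-computation forces the corresponding oracle into a class whose total mass is bounded by the increase of $\Omega$). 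A Fubini/H\"older estimate then converts this $n$-dimensional mass into a one-dimensional $\cost_{\Omega,k/n}$-bound, with the exponent $k/n$ appearing as the inverse of the bound used in the other direction.

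The main obstacle in both directions is precisely the measure-theoretic bridge between $n$-dimensional product-measure estimates on bad $n$-tuples and the one-dimensional $\cost_\Omega$-bound with the sharp exponent $k/n$; this is where a careful H\"older/power-mean calculation on the $\binom{n}{k}$ possible $k$-joins is needed. Once that measure lemma is isolated, the remainder of both directions reduces to a routine adaptation of the standard $\cost_\Omega$ techniques, as used in \cref{thm:smarts_exist} and \cref{prop:A_is_smart_for_c_A}.
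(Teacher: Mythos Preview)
This theorem is quoted from \cite{SubclassesPaper} and is not proved in the present paper, so there is no proof here to compare against. That said, your plan has a structural gap that is worth flagging.

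You assert that $(1)\Rightarrow(2)$ is immediate ``since $\Omega$ is ML-random''. This is the wrong direction. Condition~(1) says that \emph{some} random sequence~$X$ witnesses that every $k$-join of its $n$-columns computes~$A$; it does not say that \emph{every} random does, and in particular it does not say that~$\Omega$ does. The trivial implication is $(2)\Rightarrow(1)$, taking $X=\Omega$. With this reversed, your cycle $(1)\Rightarrow(2)\Rightarrow(3)\Rightarrow(1)$ collapses: you have sketched $(2)\Rightarrow(3)$ and $(3)\Rightarrow(1)$, but neither of these, nor the genuine trivial implication $(2)\Rightarrow(1)$, gets you back from~(1) to anything. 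The substantial implication you are missing is $(1)\Rightarrow(3)$: from an \emph{arbitrary} random witness~$X$, with no left-c.e.\ structure available, one must extract obedience to $\cost_{\Omega,k/n}$. This is where the Loomis--Whitney-type inequality across the $\binom{n}{k}$ joins actually does its work in \cite{SubclassesPaper}, and it cannot be replaced by an argument that already assumes the witness is~$\Omega$.

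Your sketch of $(3)\Rightarrow(1)$ is also off target. You propose to bound, for an arbitrary random~$X$ and an arbitrary functional~$\Phi_e$, the measure of those~$X$ whose $J$-projection carries a $\Phi_e$-computation extending $A_s\uhr{x+1}$ by a multiple of $\cost_\Omega(x,s)^{k/n}$. There is no reason for such a bound to hold: nothing ties the measure of $\Phi_e$-preimages to the approximation of~$\Omega$. The correct route for $(3)\Rightarrow(2)$ (and hence $(3)\Rightarrow(1)$) is the one indicated by \cref{prop:antichain:capturing} and \cref{prop:basic fact}: show directly that each $k$-join of the columns of~$\Omega$ is captured by a $\cost_{\Omega,k/n}$-test, and then invoke the basic fact. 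No H\"older estimate is needed on that side.
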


Hence the $p$-bases are characterised by cost functions. \Cref{thm:smarts_exist} implies that, for every rational $p\in (0,1)$, there is a smart~$p$-base: a greatest ML-degree of $p$-bases. If $p<q$, then every $p$-base is also a $q$-base, as $\cost_{\Omega,p}\ge^\times \cost_{\Omega,q}$. However there also is a $q$-base that is not a $p$-base. Thus, the smart $p$-bases form a dense chain of ML-degrees.

Using Theorem~\ref{thm:criterion} below, we can add fourth equivalence to \Cref{thm:k_n-bases}, one that appears to be significantly weaker than (2):

\begin{enumerate}\it
\setcounter{enumi}{3}
\item $A$ is $K$-trivial and is computable from the join of \emph{some} choice of~$k$ of the $n$-columns of~$\Omega$.
\end{enumerate}
In other words: if a $K$-trivial is computable from \emph{some} $k/n$-fragment of $\Omega$, then it is computable from \emph{any} $k/n$-fragment of $\Omega$. Recall that any c.e.\ set computable from a Turing incomplete random set is $K$-trivial~\cite{HirschfeldtNiesStephan:UsingRandomSetsAsOracles}. Since every $k/n$-fragment of $\Omega$ is incomplete, we obtain:

\begin{cor}
If~$X$ and~$Y$ are both $k/n$-fragments of $\Omega$ for $k<n$, then $X$ and~$Y$ compute the same c.e.\ sets.
\end{cor}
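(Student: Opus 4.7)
The plan is to derive this corollary directly from the enhanced equivalence promised earlier in the section, namely the new condition~(4) being added to \cref{thm:k_n-bases}: a $K$-trivial set is a $k/n$-base as soon as it is computable from \emph{some} join of $k$ of the $n$-columns of $\Omega$. By symmetry, it suffices to show that every c.e.\ set $C \leT X$ is also computable from $Y$.

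First I would check that any $k/n$-fragment of $\Omega$ with $k<n$ is Turing incomplete. Write $X'$ for the join of the remaining $n-k$ columns of $\Omega$; by van Lambalgen's theorem, $X$ and $X'$ are mutually Martin-L\"of random, and $\Omega \equiv_T X \oplus X'$. If $X$ were Turing complete, then $X \geT \Omega \geT X'$, so $X'$ would be $X$-computable; but then $X'$ could not be Martin-L\"of random relative to~$X$, a contradiction.

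With incompleteness in hand, the corollary follows quickly. Let $C$ be a c.e.\ set computable from $X$. Since $X$ is an incomplete ML-random, the Hirschfeldt--Nies--Stephan theorem implies that $C$ is $K$-trivial. By the new condition~(4), $C$ is a $k/n$-base, and so by condition~(2) of \cref{thm:k_n-bases}, $C$ is computable from every $k/n$-fragment of~$\Omega$; in particular $C \leT Y$. Swapping the roles of $X$ and $Y$ gives the corollary.

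The main substantive obstacle is, of course, establishing condition~(4) itself, which is the content of the results earlier in this section (in particular \cref{thm:Omega_R_is_dumb} and the accompanying cost-function characterisation combined with the $K$-trivial closure properties provided by~\cref{thm:smarts_exist} and \cref{thm:ML_degrees_ce_generated}). Once (4) is available, the corollary reduces to the short combination above, whose only nontrivial ingredient is the van Lambalgen-style observation ensuring incompleteness of proper fragments of~$\Omega$.
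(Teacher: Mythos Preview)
Your proof is correct and follows exactly the approach the paper takes: use Hirschfeldt--Nies--Stephan together with the incompleteness of proper fragments to get $K$-triviality, then invoke the new condition~(4) and condition~(2) of \cref{thm:k_n-bases}. The only difference is that the paper simply asserts incompleteness of $k/n$-fragments without proof, while you supply the van Lambalgen argument.
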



\noindent In particular, if a c.e.\ set is computable from one half of~$\Omega$, it is also computable from the other half.

\medskip
\noindent \emph{The cost functions $\cost_R$.}
We now turn to the general analysis of the question which $K$-trivials are computed by fragments of $\Omega$. For $n\ge 1$ and $T\subseteq \{1,2,\dots,n\}$, let $R(T,n) = \bigcup_{j\in T} j-1+n\Nat$. So $\Omega_{R(T,n)}$ is the join of the $n$-columns of~$\Omega$ indexed by~$T$ (up to a simple computable permutation, depending on how we take the join).

Let $R$ be an infinite computable set. The first question is how to generalise the cost function $\cost_{\Omega,k/n}$ to a cost function $\cost_{\Omega,R}$. 
A basic step in the analysis of $k/n$-bases was the observation that if $T\subseteq \{1,2,\dots, n\}$ has size~$k$, then $\Omega_{R(T,n)}$ is captured by a $\cost_{\Omega,k/n}$-test; this gave the implication (3)$\then$(2) of \cref{thm:k_n-bases}. We would like to capture the bits of $\Omega_R$ that are given by $\Omega\rest{n}$ by the $n\tth$ component of a $\limcost_{\Omega,R}$-test. So perhaps the first guess would be to define $\limcost_{\Omega,R}(n) = (\Omega-\Omega_n)^{|R\cap n|/n}$. It turns out that this is not quite right; it works if $R = R(T,n)$, but that is misleading because in that case the density of initial segments of $R$ is more or less constant $k/n$. What would work is $\limcost_{\Omega,R}(n) = (\Omega-\Omega_n)^{|R\cap k(n)|/k(n)}$, where $\Omega-\Omega_n \in (2^{-k(n)-1}, 2^{-k(n)}]$. However, it is not clear that this cost function will be monotonic if the density of~$R$ varies. We get around this technical complication by using a ``discrete'' version.

For $n<\w$, let
\[
	k(n) = \floor{-\log_2 (\Omega-\Omega_n)},
\]
so $2^{-k(n)-1}< \Omega-\Omega_n\le 2^{-k(n)}$. Define $k_s(n)$ similarly, replacing $\Omega$ by $\Omega_s$. Note that $k(n)\le n$ for all but finitely many~$n$ (otherwise, $\Omega$ would not be random).

\begin{definition} For an infinite computable $R \subseteq \w$, define
\[
	\cost_{\Omega, R}(n,s) = 2^{-|R\cap k_s(n)|}. 
\] \end{definition}
\noindent The cost function $\cost_{\Omega, R}$ is monotonic: $k_{s+1}(n)\le k_s(n)$ and $k_s(n+1)\ge k_s(n)$. It also satisfies the limit condition: $\limcost_{\Omega,R}(n) = 2^{-|R\cap k(n)|}$ is finite and, since $\lim_n k(n) = \infty$ and~$R$ is infinite, $\lim_{n}\limcost_{\Omega,R}(n) = 0$. Finally, we note that $\cost_{\Omega, R}(n,s)\cdot \cost_{\Omega, R^\complement}(n,s) = 2^{-k_s(n)} =^\times \Omega_s - \Omega_n$, where $R^\complement$ is the complement of~$R$. 

\begin{remark} \label{rmk:old_p_cost}
For any infinite $R$, note that $\limcost_{\Omega,R}(n) =^\times (\Omega-\Omega_n)^{|R\cap k(n)|/k(n)}$. For,
\begin{align*}
\left(2^{-k(n)-1}\right)^{|R\cap k(n)|/k(n)} &< (\Omega-\Omega_n)^{|R\cap k(n)|/k(n)} \le \left(2^{-k(n)}\right)^{|R\cap k(n)|/k(n)}\\
2^{-|R\cap k(n)|}\cdot 2^{-|R\cap k(n)|/k(n)} &< (\Omega-\Omega_n)^{|R\cap k(n)|/k(n)} \le 2^{-|R\cap k(n)|}\\
\frac12\cdot \limcost_{\Omega,R}(n) &< (\Omega-\Omega_n)^{|R\cap k(n)|/k(n)} \le \limcost_{\Omega,R}(n).
\end{align*}
In particular, if $T\subseteq \{1,2,\dots, n\}$ has size~$k$, then $\limcost_{\Omega,R(T,n)}=^\times \limcost_{\Omega,k/n}$. 
\end{remark}
\begin{proposition} \label{prop:antichain:capturing}
$\Omega_R$ is captured by a $\cost_{\Omega, R}$-test. 
\end{proposition}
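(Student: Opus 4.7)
My plan is to use the effective approximation of $\Omega$ to produce, at each stage $s$, a guess at the first $|R \cap k_s(n)|$ bits of $\Omega_R$, and take the union of the resulting cylinders. For a binary string $\sigma$, let $\rho_R(\sigma) \in 2^{|R \cap |\sigma||}$ denote the string obtained by reading off the bits of $\sigma$ at positions in $R \cap |\sigma|$, in order. Define
\[
V_n \;=\; \bigcup_{s > n,\; \Omega_s > \Omega_n}\bigl[\rho_R\bigl(\Omega_s\rest{k_s(n)}\bigr)\bigr].
\]
Nestedness $V_{n+1} \subseteq V_n$ follows from $k_s(n+1) \ge k_s(n)$: the inclusion $R \cap k_s(n) \subseteq R \cap k_s(n+1)$ forces $\rho_R(\Omega_s\rest{k_s(n+1)})$ to extend $\rho_R(\Omega_s\rest{k_s(n)})$. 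Uniform $\Sigma^0_1$-ness is built in. For $\Omega_R \in V_n$: since $\Omega_s \uparrow \Omega$ and $k_s(n) \downarrow k(n)$, for all sufficiently large $s$ we have $\Omega_s\rest{k_s(n)} = \Omega\rest{k_s(n)}$, and then $\rho_R(\Omega_s\rest{k_s(n)})$ is a genuine initial segment of $\Omega_R$.

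The main obstacle is the measure bound $\mu(V_n) \le^\times 2^{-|R \cap k(n)|} = \limcost_{\Omega,R}(n)$. Enumerate $R = \{r_0 < r_1 < \cdots\}$ and group the contributing stages into levels $L_\ell = \{s > n : |R \cap k_s(n)| = \ell\}$. Since $|R \cap m| = \ell$ exactly when $m \in (r_{\ell-1}, r_\ell]$ (with the convention $r_{-1} = -1$), each $s \in L_\ell$ satisfies
\[
\Omega_s - \Omega_n \;\in\; (2^{-r_\ell - 1},\, 2^{-r_{\ell-1} - 1}],
\]
an interval of width strictly less than $2^{-(r_{\ell-1}+1)}$. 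The key observation is that an interval of reals of width less than $2^{-(r_{\ell-1}+1)}$ meets at most two dyadic intervals of that width, so $\Omega_s\rest{r_{\ell-1}+1}$ takes at most two distinct values as $s$ ranges over $L_\ell$. Because the positions in $R \cap k_s(n) = \{r_0, \ldots, r_{\ell-1}\}$ are all at most $r_{\ell-1}$, the projection $\rho_R(\Omega_s\rest{k_s(n)})$ is determined by $\Omega_s\rest{r_{\ell-1}+1}$, and so it too takes at most two values on $L_\ell$. Hence $L_\ell$ contributes at most two cylinders of length $\ell$, a total of at most $2\cdot 2^{-\ell}$ in measure. Only $\ell \ge |R \cap k(n)|$ arise (since $k_s(n) \ge k(n)$), so summing yields
\[
\mu(V_n) \;\le\; \sum_{\ell \ge |R \cap k(n)|} 2\cdot 2^{-\ell} \;=\; 4 \cdot 2^{-|R \cap k(n)|}.
\]

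The heart of the difficulty is precisely this bound. A naive Solovay-style sum $\sum_s 2^{-|R \cap k_s(n)|}$ over contributing stages is not controlled by a constant multiple of $2^{-|R \cap k(n)|}$, since many stages may share the same level $\ell$. The trick is to group by $\ell$ rather than by $s$, and to exploit that on each level $\Omega_s$ is confined to an interval narrow enough that only two prefixes of length $r_{\ell-1}+1$ (and hence only two projections) can occur.
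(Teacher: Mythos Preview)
Your proof is correct, but it takes a more elaborate route than the paper's. The paper defines $U_n = \bigcup_{s\ge n} G_{\Omega_s\rest n}$ using the fixed length~$n$ (where $G_\sigma$ is your $[\rho_R(\sigma)]$), and then bounds the measure in one step: since $G_{\Omega_s\rest n}\subseteq G_{\Omega_s\rest{k(n)}}$ (using $k(n)\le n$ for all but finitely many~$n$) and since $\Omega-\Omega_n\le 2^{-k(n)}$ forces $\{\Omega_s\rest{k(n)}:s\ge n\}$ to contain at most two strings, it gets $\leb(U_n)\le 2\cdot 2^{-|R\cap k(n)|}$ immediately. By contrast, you work with the stage-dependent cutoff $k_s(n)$, which obliges you to stratify by level $\ell=|R\cap k_s(n)|$, argue ``at most two strings'' at each level, and sum a geometric series, arriving at the constant~$4$. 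Both arguments rest on the same idea---the approximations $\Omega_s$ live in a short interval, so only boundedly many prefixes occur---but the paper's choice of a fixed cutoff $k(n)$ collapses your level-by-level sum to a single term.
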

\begin{proof}
We use the idea from the proof of~\cite[Prop.\ 2.9]{SubclassesPaper} that each $p$-Oberwolfach test is covered by a $\cost_{\Omega,p}$-tests. 

For a string $\s\in 2^{<\w}$ of length $t$, let $\s_R$ denote the string of length $|R \cap t|$ given by the bits of $\s$ with location in $R$. Let $G_\s = \left\{ X\in 2^\w \,:\, \s_R \prec X\right \}$.
Clearly $\leb(G_\s)= 2^{-|R\cap t|}$. 

Let $U_n = \bigcup_{s\ge n} G_{\Omega_s\rest{n}}$. Then $\Omega_R\in \bigcap_n U_n$. Note also that $U_{n+1}\subseteq U_n$. Since $\Omega-\Omega_n \le 2^{-k(n)}$, the set $\{\Omega_s\rest{k(n)}\,:\, s\ge n\}$ contains at most two strings. But $U_n \subseteq \bigcup_{s\ge n} G_{\Omega_s\rest{k(n)}}$, so
\[
	\leb(U_n) \le 2\cdot 2^{-|R\cap k(n)|} = 2\cdot \limcost_{\Omega,R}(n). \qedhere
\]
\end{proof}

We proceed to the main theorem of this section. The equivalence (i)$\leftrightarrow$(iii) provides a simple combinatorial description of when a fragment $\Omega_S$ computes no more $K$-trivials than another fragment $\Omega_R $: for each number~$m$, the size of $S$ below $m$ exceeds the size of $R$ below $m$ by at most a constant. 

\begin{thm} \label{thm:criterion}
The following are equivalent for infinite computable sets~$R$ and~$S$:
\begin{itemize}
\item[(i)] Each $K$-trivial computed by $\Omega_S$ is also computed by $\Omega_R$.
\item[(ii)] $\Omega_R$ is captured by a $\cost_{\Omega,S}$-test.
\item[(iii)] $|S\cap m| \le^+ |R\cap m|$.
\item[(iv)] $\cost_{\Omega,S} \to \cost_{\Omega,R}$.
\end{itemize}
\end{thm}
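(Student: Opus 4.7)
The plan is to close a cycle through all four statements. I will treat (iii)$\Leftrightarrow$(iv) and (iv)$\Rightarrow$(ii) as bookkeeping and concentrate the substantive work on the two implications (ii)$\Rightarrow$(i) and (i)$\Rightarrow$(iv) that bridge computability from a fragment with obedience of a cost function; for these I invoke the forthcoming dumbness result \cref{thm:Omega_R_is_dumb}.

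I first handle (iii)$\Leftrightarrow$(iv) by unwinding the definitions. The criterion $\cost\to\dost$ iff $\underline{\dost}\leq^\times\limcost$, together with $\limcost_{\Omega,T}(n)=2^{-|T\cap k(n)|}$, turns (iv) into $|S\cap k(n)|\leq^+|R\cap k(n)|$ for all $n$. Taking $\seq{\Omega_s}$ to be a sufficiently smooth increasing approximation of $\Omega$, the integer $k(n)$ is nondecreasing with bounded increments (in fact $k(n+1)-k(n)\in\{0,1\}$ cofinitely often), so the range of $k$ contains a tail of $\NN$ and the bound transfers from $m=k(n)$ to all sufficiently large $m$, which is (iii). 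For (iv)$\Rightarrow$(ii) I appeal to \cref{prop:antichain:capturing}: its test is a $\cost_{\Omega,R}$-test capturing $\Omega_R$, and under (iv) it is automatically a $\cost_{\Omega,S}$-test (up to a multiplicative constant).

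For (ii)$\Rightarrow$(i), let $\{V_n\}$ be the $\cost_{\Omega,S}$-test capturing $\Omega_R$ and let $A$ be any $K$-trivial with $A\leq_T\Omega_S$. Invoking the dumbness of $\Omega_S$ (\cref{thm:Omega_R_is_dumb} with parameter $S$) gives $A\models\cost_{\Omega,S}$, and then \cref{prop:basic fact} applied to $\{V_n\}$ yields $A\leq_T\Omega_R$. For (i)$\Rightarrow$(iv), let $B\models\cost_{\Omega,S}$; because $\cost_{\Omega,S}\to\cost_\Omega$ (as $\limcost_\Omega(n)=\Omega-\Omega_n\leq 2^{-k(n)}\leq 2^{-|S\cap k(n)|}=\limcost_{\Omega,S}(n)$), the set $B$ is $K$-trivial. \cref{prop:antichain:capturing} and the basic fact give $B\leq_T\Omega_S$; hypothesis (i) promotes this to $B\leq_T\Omega_R$; and dumbness of $\Omega_R$ (\cref{thm:Omega_R_is_dumb} with parameter $R$) then delivers $B\models\cost_{\Omega,R}$, which is (iv).

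The main obstacle is the use of \cref{thm:Omega_R_is_dumb}: that theorem is the genuine bridge from Turing computability out of a fragment of $\Omega$ to obedience of the associated cost function, and I do not see how to close the cycle without it. For completeness I note that the smart-set machinery of \cref{thm:smarts_exist} alone already yields the partial implication (i)$\Rightarrow$(ii): pick $\tilde A$ smart for $\cost_{\Omega,S}$, observe $\tilde A\leq_T\Omega_S$ via the basic fact and \cref{prop:antichain:capturing}, apply (i) to get $\tilde A\leq_T\Omega_R$, then invoke smartness of $\tilde A$ to conclude (ii). The remaining directions (ii)$\Rightarrow$(iv) and (ii)$\Rightarrow$(i), however, seem genuinely to require the dumbness ingredient.
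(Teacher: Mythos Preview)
Your cycle through (i), (ii), and (iv) via the dumbness theorem is correct and not circular: \cref{thm:Omega_R_is_dumb} rests on \cref{lem:p_base_from_fragment}, \cref{prop:antichain:capturing}, and \cref{prop:basic fact}, none of which invokes \cref{thm:criterion}. Your routing differs from the paper's (which does (i)$\to$(ii) via smart sets and (iv)$\to$(i) via a direct application of \cref{lem:p_base_from_fragment} to the pair $\Omega_{S^\complement}\oplus\Omega_S$), but the underlying ingredients are the same, and your packaging via dumbness is arguably cleaner.

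The genuine gap is in (iv)$\Rightarrow$(iii). You write ``taking $\seq{\Omega_s}$ to be a sufficiently smooth increasing approximation'', but the approximation is fixed throughout the paper, and the cost functions $\cost_{\Omega,S}$, $\cost_{\Omega,R}$---hence the very meaning of (iv)---depend on it. For an arbitrary approximation the jumps $k(n+1)-k(n)$ can be unbounded (whenever $\Omega_{n+1}-\Omega_n$ is close to $\Omega-\Omega_n$), so the range of $k$ need not be cofinite, and the bound $|S\cap k(n)|\le^+|R\cap k(n)|$ does not transfer to all $m$. You could try to rescue this by arguing that (iv) is approximation-independent (which does follow a posteriori from dumbness, since the obeying sets of $\cost_{\Omega,T}$ are exactly the $K$-trivials below $\Omega_T$), and then separately constructing a smooth approximation; but you do neither, and both steps require real work.

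The paper closes this gap differently: it proves (ii)$\Rightarrow$(iii) as the contrapositive of \cref{prop:antichain:non-capturing}, a direct argument showing that if $|S\cap m|\nle^+|R\cap m|$ then any $\cost_{\Omega,S}$-test capturing $\Omega_R$ could be upgraded to an ML-test capturing $\Omega_R\oplus\Omega_{R^\complement}$, contradicting its randomness. This is the substantive content you are missing; your attempt to replace it with bookkeeping on $k(n)$ does not go through as written.
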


\noindent The promised application (from the beginning of the section) of the main result follows easily: if $T,T'\subseteq \{1,2,\dots, n\}$ have size~$k$, then $|R(T,n)\cap m|=^+ |R(T',n)\cap m|$. Therefore, any $k$-trivial computable from $\Omega_{R(T,n)}$ is computable from every $k/n$-fragment of~$\Omega$, hence is a $k/n$-base. 

\begin{remark}[The dual of ML-reducibility] \label{rem:ML*} The relative complexity of fragments of~$\Omega$ can be understood in terms of the dual of ML-reducibility.
 For general ML-random sequences~$Y$ and~$Z$, we write $Y\le_{\ML^*} Z$ if for every $K$-trivial set~$A$, if $A\leT Y$ then $A\leT Z$.

Again Turing reducibility implies ML$^*$-reducibility. The top degree consists of those randoms that compute all $K$-trivial sets; these are the randoms that fail some $\cost_\Omega$-test (i.e., the non-\emph{Oberwolfach randoms}~\cite{Bienvenu.Greenberg.ea:16}). Of course, these include all the Turing complete randoms. The bottom degree consists of the weakly 2-random sequences, the randoms that compute no $K$-trivial sets. 

Using this language, (i) in Theorem~\ref{thm:criterion} states that $\Omega_S \le_{\ML^*} \Omega_R$. The equivalence (i)$\lra$(iii) in~\ref{thm:criterion} provides a complete characterisation of ML${}^*$-reducibility between fragments of~$\Omega$ by a simple combinatorial condition on the underlying computable sets. 
The intuition is that as $R$ gets thinner, $\Omega_R$ gets computationally weaker (in the coarse sense of ML${}^*$). The randomness enhancement principle says that among ML-random sets, being computationally weaker is equivalent to being more random (see \cite{Nies:ICM} and the discussion there). By this principle, $\Omega_R$ also gets more random as $R$ gets thinner. 
 \end{remark}

Due to the relative length of the proof, we state and prove the implication (ii)$\to$(iii) of \cref{thm:criterion} separately.

\begin{proposition} \label{prop:antichain:non-capturing} \

\noindent Let $R$ and $S$ be infinite computable sets such that $|S\cap m| \nle^+ |R\cap m|$, i.e., the function $m \mapsto |S\cap m|- |R\cap m|$ is unbounded. Then $\Omega_R$ is $\cost_{\Omega,S}$-random.
\end{proposition}
\begin{proof}
Suppose for a contradiction that $\Omega_{R}$ can be captured by a $\cost_{\Omega, S}$-test. Then using Proposition~\ref{prop:antichain:capturing} there is a nested test $\seq{\+U_n}$ capturing the sequence $Z=\Omega_R \oplus \Omega_{R^\complement}$ with $\leb(\+U_{n})\le (\limcost_{\Omega,S}(n))\cdot (\limcost_{\Omega,R^\complement}(n))$. We show that this implies that~$Z$ is not ML-random. To do this, we show how to uniformly enumerate an open set~$\+V$ of small measure that contains~$Z$.
	
Note that $|S\cap m| - |R\cap m| = |S\cap m| + |R^\complement\cap m| - m$. Given a rational $\epsilon > 0$, we can thus effectively find a $k$ with $|S \cap k| + |R^\complement\cap k| - k > 1- \log \epsilon$.

Define a location $n_s>k$ recursively at stages $s\ge k$. Recall that $k_s(n) = \floor{-\log_2(\Omega_s-\Omega_n)}$. 	For $s = k$, or if $k_{s+1}(n_s)< k$, we let $n_{s+1}$ be the least $n\ge k$ such that $k_{s+1}(n) >k$. Otherwise, we let $n_{s+1}= n_s$. 

There are at most $2^{k+1}$ stages~$s$ at which $n_s\ne n_{s-1}$. For let $s<t$ be two such stages, then $\Omega_s - \Omega_{n_s} \le 2^{-(k+1)}$. But $\Omega_t - \Omega_{n_s} > 2^{-k}$, so $\Omega_t - \Omega_s > 2^{-(k+1)}$.

Let $\+V = \bigcup_{s>k} \+U_{n_s,s}$. For each stage~$s\ge k$, we have $k_s(n_s)\ge k$, so
\begin{multline*}
\leb(\+U_{n_s,s}) \le (\cost_{\Omega,S}(n_s,s))\cdot (\cost_{\Omega,R^\complement}(n_s,s)) = \\
2^{-|S\cap k_s(n_s)|}\cdot 2^{-|R^\complement\cap k_s(n_s)|}
\le 2^{-|S\cap k|}\cdot 2^{-|R^\complement\cap k|} = \\
 2^{-(|S\cap k| + |R^\complement\cap k|)}
< 2^{\log \epsilon - k-1} = 2^{-k-1}\cdot \epsilon.
\end{multline*}
Therefore,
\[
	 \leb(\+V) \le 2^{k+1}\cdot 2^{-k-1}\cdot \epsilon = \epsilon.\qedhere
\]



\end{proof}

\medskip
The hardest implication is (iv)$\to$(i). To prove it, we rely on a lemma of interest on its own. Informally, the lemma says that if $X\oplus Y$ is ML-random, but $X$ is not too random in the sense that $X$ fails a $\cost_{\Omega, R^\complement}$-test for a co-infinite computable set $R$, then any $K$-trivial Turing below the ``other side'' $Y$ obeys the complementary cost function $\cost_{\Omega, R}$. For example, let $X= \Omega_{R^\complement}$ and $Y= \Omega_R$, so that in addition $Y$ fails a $\cost_{\Omega, R}$-test. In this case, any $K$-trivial set $A$ obeying $\cost_{\Omega, R}$ is below $Y$; the lemma says that these are the only $K$-trivials below $Y$. (See Theorem~\ref{thm:Omega_R_is_feeble}.)
\newcommand{\SAVEText}{Let $R \subseteq \w$ be computable and co-infinite. Suppose that $X \oplus Y$ is ML-random, and that~$X$ is captured by a $\cost_{\Omega, R^\complement}$-test. Suppose that~$A$ is $K$-trivial, and that $A \leT Y$. Then $A$ obeys $ \cost_{\Omega, R}$.} 
\begin{lem} \label{lem:p_base_from_fragment}
\SAVEText
\end{lem}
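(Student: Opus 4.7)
The plan is to prove the lemma by establishing the cost-function implication $\cost_A \to \cost_{\Omega,R}$, where $\cost_A$ is the cost function built in \cref{sec:strongest}. Since $A \models \cost_A$ by \cref{prop:A_is_smart_for_c_A}, this implication immediately gives $A \models \cost_{\Omega,R}$, which is the desired conclusion.

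By the smartness of $A$ for $\cost_A$ (\cref{prop:A_is_smart_for_c_A}), since $Y$ is an ML-random with $A \le_\Tur Y$, there is a $\cost_A$-bounded test $\seq{T_n}$ capturing $Y$, with $\leb(T_n) \le^\times \limcost_A(n)$; and the hypothesis gives a $\cost_{\Omega,R^\complement}$-bounded test $\seq{V_n}$ capturing $X$, with $\leb(V_n) \le^\times 2^{-|R^\complement \cap k(n)|}$. After replacing each with a slower version (constructed in the style of \cref{thm:smarts_exist}), we may additionally assume the stagewise bounds $\leb(V_{n,s}) \le^\times 2^{-|R^\complement \cap k_s(n)|}$ and $\leb(T_{n,s}) \le^\times \cost_A(n,s)$. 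Now suppose for contradiction that $\cost_A \not\to \cost_{\Omega,R}$: for each $k$ there is some $n$ with $\limcost_A(n) \le 2^{-k}\cdot\limcost_{\Omega,R}(n)$, and the approximation inequality $\cost_A(n,s)\cdot 2^k \le \cost_{\Omega,R}(n,s)$ eventually holds. For such $(n,k,s)$, the identity $2^{-|R^\complement\cap k_s(n)|}\cdot 2^{-|R\cap k_s(n)|} = 2^{-k_s(n)}$ combined with the stagewise bounds gives the key estimate
\[
\leb(V_{n,s}\times T_{n,s}) \le^\times 2^{-|R^\complement\cap k_s(n)|}\cdot 2^{-k}\cdot 2^{-|R\cap k_s(n)|} = 2^{-k-k_s(n)} \le 2^{-k}.
\]

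With this estimate in hand, I would build an effective Solovay test on $2^\w \times 2^\w$ by opportunistic enumeration: at each stage $s$, for each $k \le s$ not yet handled, search for the least $n\le s$ satisfying the approximation inequality, commit to this $(n,k)$, and enumerate $V_{n,s'}\times T_{n,s'}$ at appropriate later stages $s'$. The estimate above keeps the total mass at level $k$ bounded by a constant times $2^{-k}$, so the total mass is dominated by $\sum_k 2^{-k} = 2$. Since $X \in V_n$ and $Y \in T_n$ always, the approximations $V_{n,s'}$ and $T_{n,s'}$ eventually contain $X$ and $Y$, so $(X,Y)$ lies in infinitely many enumerated events. The Solovay test therefore captures $X\oplus Y$, contradicting its ML-randomness.

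The main obstacle is the delicate bookkeeping of the effective construction: the witnessing subsequence $n_k$ is only $\Delta^0_2$, so the enumeration must proceed on the basis of approximations, and the growth of $\cost_A(n,s)$ over time means that ``premature'' enumerations (where the approximation inequality holds but the limit one fails) must not accumulate too much mass. The stagewise bound $\leb(T_{n,s}) \le^\times \cost_A(n,s)$ available from the smart-set construction is the crucial ingredient that keeps every enumeration's weight controlled by $2^{-k-k_s(n)}$ rather than by $\limcost_A(n)$, so that the geometric series $\sum_k 2^{-k}$ governs the total.
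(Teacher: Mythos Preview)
Your approach is genuinely different from the paper's, which does not go through $\cost_A$ at all but directly exhibits a computable approximation of $A$ with finite $\cost_{\Omega,R}$-cost, using a ``ravenous sets'' difference-test construction together with density facts about ML-random sequences (\cref{lem:density_0_difference_test,lem:density_1_and_LR_hard}). Your plan of showing $\cost_A \to \cost_{\Omega,R}$ is an attractive shortcut, and the target implication is in fact true (for c.e.\ $A$ it follows a posteriori from the lemma via \cref{prop:c_A_is_strongest}), so there is no obstruction in principle. But the Solovay-test construction you describe has a real gap.

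The problem is precisely the one you flag but do not resolve: controlling the mass contributed by wrong commitments. Your stagewise bound does ensure that, while the approximation inequality $\cost_A(n,s)\le 2^{-k}\cost_{\Omega,R}(n,s)$ holds, the box $V_{n,s}\times T_{n,s}$ has measure at most a constant times $2^{-k}$. But you face a dilemma. If for each $k$ you commit once to some $n$ and freeze the enumeration when the inequality first fails, then the level-$k$ box has controlled mass, but there is no reason $(X,Y)$ is in it: you may have committed to an $n$ for which the limit inequality fails, frozen too early, and missed $(X,Y)$ at infinitely many levels. If instead you abandon and re-commit whenever the inequality fails, each commitment costs up to a constant times $2^{-k}$, but nothing bounds the number of re-commitments: the ratio $\cost_A(n,s)/\cost_{\Omega,R}(n,s)$ can cross the threshold $2^{-k}$ arbitrarily often, and you may cycle through many values of $n$ (indeed the same $n$ repeatedly). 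The ``geometric series $\sum_k 2^{-k}$'' does not govern the total unless you can bound the number of level-$k$ events, and your write-up supplies no such bound. The additional information $\cost_A \to \cost_\Omega$ (which you would need to invoke, and which in the non-c.e.\ case is not provided by \cref{prop:c_A_is_strongest}) gives $k \le^+ |R^\complement \cap k(n_k)|$ for the true witness $n_k$, but this does not by itself control the false ones. Absent a mechanism to charge abandoned commitments to a finite resource, the construction as described does not produce a Solovay test.
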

\newcounter{SAVEthm}
\newcounter{SAVEsection}
\setcounter{SAVEthm}{\value{thm}}
\setcounter{SAVEsection}{\value{section}}
 The proof will be the content of Section~\ref{sec:proof}.

\begin{remark}
While $\cost_{\Omega,R}$ was only defined for infinite~$R$, the definition can be interpreted for finite~$R$, in which case the cost function does not satisfy the limit condition, and the sets obeying it will be the computable ones. \cref{lem:p_base_from_fragment} holds for~$R$ finite or co-finite as well. The case $|R^\complement| < \infty$ tells us nothing: the hypothesis that $X$ fails a $\cost_{\Omega, R^\complement}$-test is trivial, since there is such a test that captures the entire interval; meanwhile, $\cost_{\Omega, R} =^\times \cost_\Omega$, so the conclusion $A \models \cost_{\Omega, R}$ is simply a restatement of the fact that $A$ is $K$-trivial.

The case $|R| < \infty$ is a weaker version of a known result: the assumption that~$X$ fails a $\cost_{\Omega, R^\complement}$-test tells us that $X$ is not $\cost_\Omega$-random, and thus that~$X$ is LR-hard~\cite[Thm.\ 1.5]{Bienvenu.Greenberg.ea:16}. Since~$Y$ is $X$-random, $Y$ is 2-random, and so the only $K$-trivials that~$Y$ computes are the computable sets.	
\end{remark}

\begin{proof}[Proof of \cref{thm:criterion}, assuming Lemma~\ref{lem:p_base_from_fragment}] \NumberQED{thm:criterion}
(i)$\to$(ii) Let $A$ be smart for $\cost_{\Omega, S}$ by \cref{thm:smarts_exist}. Thus $A \models \cost_{\Omega, S}$. Note that $\Omega_S$ is captured by a $\cost_{\Omega, S}$-test by \cref{prop:antichain:capturing}. By \cref{prop:basic fact}, $A \leT \Omega_S$, and so $A \leT \Omega_{R}$. By the definition of smartness for cost functions, $\Omega_{R}$ is captured by a $\cost_{\Omega, S}$-test.

\smallskip
(ii)$\to$(iii) This is the contrapositive of \cref{prop:antichain:non-capturing}.

\smallskip
(iii)$\to$(iv)
Fix $b$ such that $|S\cap m| \le |R\cap m| + b$ for all $m$. Then $|S\cap k(n)| \le |R\cap k(n)| + b$ for all~$n$, meaning that $\limcost_{\Omega, R} \le 2^b\limcost_{\Omega, S}$, and so $\cost_{\Omega,S}\to \cost_{\Omega,R}$.

\smallskip
(iv)$\to$(i) Suppose $\cost_{\Omega,S} \to \cost_{\Omega,R}$, or equivalently $ \limcost_{\Omega,R} \le^\times \limcost_{\Omega,S}$. Since
 \[\limcost_{\Omega,R}\cdot \limcost_{\Omega,R^{{\complement}}} =^\times \limcost_\Omega =^\times
 \limcost_{\Omega,S}\cdot\limcost_{\Omega,S^\complement},\] 
 it follows that $\limcost_{\Omega, S^\complement} \le^\times \limcost_{\Omega, R^\complement}$. By \cref{prop:antichain:capturing}, $X:=\Omega_{S^\complement}$ is captured by a $\cost_{\Omega, S^\complement}$-test, which hence is a $\cost_{\Omega, R^\complement}$-test. By \cref{lem:p_base_from_fragment}, for any $K$-trivial $A \leT \Omega_S:= Y$, $A \models \cost_{\Omega, R}$. Thus $A \leT \Omega_{R}$ by \cref{prop:basic fact}.
\end{proof}

\section{Feeble sets for cost functions, and the structure of ML-degrees}
\noindent We discuss some ramifications of   \cref{thm:criterion}, and also \cref{lem:p_base_from_fragment}, which is a main technical ingredient to the proof of the theorem.

\smallskip

\noindent \emph{A notion that is dual to smartness for a cost function.} 
We will call an ML-random failing a $\cost$-test \emph{feeble for~$\cost$} if the only $K$-trivials it computes are the ones that obey~$\cost$. Recall that a $K$-trivial set obeying a cost function $\cost$ is smart for $\cost$ if only the ML-random sets that fail a $\cost$-test compute it (Def.\ \ref{def:smartness}). Feebleness for $\cost $ is dual to smartness for $\cost$: in each case, by \cref{prop:basic fact} the definition says that the collection of sets of the ``opposite'' type that are Turing comparable to the given set is as small as possible.

%
As a consequence of \cref{lem:p_base_from_fragment}, we obtain 
a natural characterisation of the $K$-trivial sets that are Turing below $\Omega_R$ for some infinite computable set $R$. 

\begin{theorem} \label{thm:Omega_R_is_feeble}
If~$R$ is an infinite computable set, then $\Omega_{R}$ is feeble for $\cost_{\Omega, R}$. (So~the $K$-trivials computable from $\Omega_{R}$ are exactly those that obey $\cost_{\Omega, R}$.)
\end{theorem}
\begin{proof}
By \cref{prop:antichain:capturing}, $\Omega_{R}$ fails a $\cost_{\Omega, R}$-test.

Suppose $A\leT\Omega_R := Y$  and $A$ is $K$-trivial. The sequence $\Omega_R\oplus\Omega_{R^\complement}$ is ML-random and $X:=\Omega_{R^\complement}$ fails a $\cost_{\Omega, R^\complement}$-test, so $A\models \cost_{\Omega, R}$ by~\cref{lem:p_base_from_fragment}.
\end{proof}

We make two observations that follow from the definitions of smartness and feebleness via \cref{prop:basic fact}. They tell us that cost functions that admit feeble sequences are special. The first observation implies that if~$\cost$ has a feeble random sequence, then the collection of sets that obey~$\cost$ determines a principal ideal of ML-degrees.

\begin{proposition} \label{lem:feeble_characterisation} \

\noindent
Let $\cost$ be a cost function such that $\cost\to \cost_\Omega$. Suppose that~$Z$ is feeble for~$\cost$ and that~$B$ is smart for~$\cost$. Then the following are equivalent for a $K$-trivial set~$A$:
\begin{enumerate}
	\item $A\models \cost$;
	\item $A\leT Z$;
	\item $A\le_\ML B$. 
\end{enumerate}
\end{proposition}

Thus, for example, no random can be feeble for the cost function $\cost_{(A)}$ for the set~$A$ built in the proof of~\cref{thm:shift}.

The next observation says informally that for cost functions admitting feeble sequences, the following are all equivalent: cost function implication, ML-reducibility between corresponding smart sets, and ML$^*$ reducibility between corresponding feeble sets in the sense of Remark~\ref{rem:ML*}.
\begin{proposition} \label{lem:feeble_and_feebleer} 
Let $\cost$ and~$\dost$ be cost functions such that $\cost\to \cost_\Omega$ and $\dost\to \cost_\Omega$. Suppose that $Z_\cost$ and~$Z_\dost$ are feeble for~$\cost$ and~$\dost$ (respectively), and that~$B_\cost$ and $B_\dost$ are smart for~$\cost$ and~$\dost$ (respectively). The following are equivalent:
\begin{enumerate}
	\item $\cost\to \dost$;
	\item $B_\cost \le_\ML B_{\dost}$;
	\item Each $K$-trivial Turing below $Z_\cost$ is Turing below $ Z_\dost$, i.e. $Z_\cost \le_{\ML^*} Z_\dost$. 
\end{enumerate}
\end{proposition}
\begin{proof} We obtain from~\cref{lem:feeble_characterisation} that (2) is equivalent to (3):
\begin{center}
$B_\cost \le_\ML B_{\dost}$ iff $\forall \text{$K$-triv.}\, A \, [ A \le_\ML B_\cost \to A \le_\ML B_\dost]$ iff $Z_\cost \le_{\ML^*} Z_\dost$. \end{center}
 
 For (1)$\to$(2), recall  from Section~\ref{s:costf} that (1) is equivalent to $\underline \dost \le^\times \limcost$, and hence implies that every $\dost$-test is a $\cost $-test. If $B_\dost \leT Y$ for ML-random $Y$ then $Y$ fails a $\cost$-test, and hence $B_\cost \leT Y$. 
 
 For (2)$\to$(1), recall that $B_\cost$ is ML-complete for $\cost$. So if $A$ obeys $\cost$ then $A \le_\ML B_\dost$. By~\cref{lem:feeble_characterisation} applied to $\dost$, this implies that $A$ obeys $\dost$.
\end{proof}

%
%

\smallskip

\noindent \emph{The structure of the ML-degrees of $K$-trivials.} In~\cref{rem: ML structure}, we provided two facts on the ML-degrees. Both were based on Proposition~\ref{prop:A_is_smart_for_c_A} that each $K$-trivial~$A$ is ML-complete for a cost functions $\cost_{(A)}$. Here we obtain a further structural result using the tools developed above (in this section and the preceding section). In contrast to what the results in \cite{SubclassesPaper} suggested, our result shows that the partial order of ML-degrees is far from linear.

\begin{thm} \label{thm:infinite_antichain} \

\noindent
There is an infinite antichain of ML-degrees of uniformly c.e.\ $K$-trivial sets. Furthermore, every countable partial ordering is embeddable into the ML-degrees of $K$-trivial sets, and via a computable embedding if the given ordering is computable. 
\end{thm} 
\begin{proof} 
We fix a uniformly computable partition of~$\Nat$ into countably many sets~$R_n$ such that the upper density of each~$R_n$ is 1 (greater than $1/2$ would do). For a computable set~$F\subseteq \Nat$, let $R(F) = \bigcup_{n\in F} R_n$. By Theorem~\ref{thm:smarts_exist} let $B_F$ be the c.e.\ $K$-trivial set, uniformly obtained from the cost function, that is smart for~$\cost_{\Omega, R(F)}$. 
The required uniformly c.e.\ antichain is $\seq{B_{\{n\}}}$. The map $F\mapsto B_F$ is an embedding of the partial ordering of computable sets under inclusion into the ML-degrees. This suffices, because one can embed the countable atomless Boolean algebra into the algebra of computable sets under inclusion using its representation as the interval algebra of the countable dense linear order $[0,1)_\mathbb Q$.

To see that the map is a partial order embedding, first suppose that $F\subseteq G$. Then $R(F)\subseteq R(G)$, and so $\cost_{\Omega, R(F)}\ge \cost_{\Omega, R(G)}$; by \cref{lem:feeble_and_feebleer}, $B_F \le_\ML B_G$. 

On the other hand, if $F\nsubseteq G$, take some $n\in F\setminus G$; so $R_n\subseteq R(F)$ but $R_n\cap R(G)=\emptyset$. The fact that the upper density of~$R_n$ is 1 implies that $|R(F)\cap m|\nle^+ |R(G)\cap m|$. 
By~\cref{thm:criterion}, this implies that $\cost_{\Omega, R(F)} \not \to \cost_{\Omega, R(G)}$. Hence $B_F \not \le_\ML B_G$ by \cref{thm:Omega_R_is_feeble} and \cref{lem:feeble_and_feebleer}.
\end{proof}

We obtain a related structural result about the ML-degrees without using the tools developed above. We give an alternative construction of incomparable ML-degrees, and use it to prove downward density.

\begin{thm} \label{thm:Ott}
For every non-computable c.e.\ set $D$, there are c.e.\ sets $A, B \leT D$ such that $A \mid_\ML B$. 
\end{thm}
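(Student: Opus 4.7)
The plan is to realise $A$ and $B$ as c.e.\ sets that are smart for two cost functions that are mutually incomparable under $\to$, and then invoke \cref{lem:dumb_and_dumber} to get ML-incomparability. First, choose infinite, co-infinite computable sets $R_1, R_2 \subseteq \Nat$ such that neither of the counting functions $m \mapsto |R_1 \cap m|$ and $m \mapsto |R_2 \cap m|$ dominates the other modulo an additive constant; explicitly, partition $\Nat$ into blocks $B_n$ of size $2^n$ and place $B_n$ entirely into $R_1$ if $n$ is even and entirely into $R_2$ if $n$ is odd. By \cref{thm:criterion}, the resulting cost functions $\cost_i := \cost_{\Omega, R_i}$ satisfy $\cost_1 \not\to \cost_2$ and $\cost_2 \not\to \cost_1$, while each $\cost_i \to \cost_\Omega$, so any set obeying $\cost_i$ is $K$-trivial.

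Next, I would run the smart-set construction of \cref{thm:smarts_exist} twice (once for $\cost_1$ producing $A$, once for $\cost_2$ producing $B$), with $D$-permitting inserted: at stage $s$, an attempted enumeration of $x_s(k) = \min(I_k \setminus A_s)$ is deferred until some $y \le x_s(k)$ enters $D$ at a later stage. Since the original proof needs at most $2^k$ enumerations per interval $I_k$, and since $D$ is non-computable, every queued enumeration is eventually permitted; otherwise, waiting for all pending attempts below $x_s(k)$ to stabilise would compute $D\uhr{x_s(k)}$ uniformly, contradicting the non-computability of $D$. With a fixed multiplicative slack in $(\diamond)$ to absorb the overshoot of $\leb(\+U_{k,s})$ during the waits, the cost-balance and capturing arguments of \cref{thm:smarts_exist} carry over, yielding c.e.\ sets $A, B \le_\Tur D$ that are smart for $\cost_1$ and $\cost_2$, respectively.

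By \cref{thm:Omega_R_is_dumb}, $\Omega_{R_i}$ is dumb for $\cost_i$, so \cref{lem:dumb_and_dumber} applies and gives $A \le_{\ML} B \Leftrightarrow \cost_1 \to \cost_2$, and $B \le_{\ML} A \Leftrightarrow \cost_2 \to \cost_1$. Both implications fail by our choice of $R_1, R_2$, hence $A \mid_{\ML} B$, completing the proof.

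The main obstacle is verifying that the $D$-permitting can be woven into the smart-set construction without breaking the measure bound on $\leb(\+U_k)$: delays in enumerating $x_s(k)$ temporarily violate $(\diamond)$ and let $\+U_{k,s}$ grow beyond its target. The standard fix, along the lines of cost-function permitting in \cite{GreenbergNies:benign} and \cite{Nies:CalculusOfCostFunctions}, is to inflate each cost function by a large multiplicative constant to create room, then exploit the finiteness (at most $2^k$) of required enumerations per interval $I_k$ to guarantee, via the non-computability of $D$, that each queued enumeration is eventually permitted before the accumulated slack is exhausted.
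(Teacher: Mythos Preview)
Your route via smart sets is genuinely different from the paper's, but it has a gap that no amount of care with permitting can close: for your fixed $R_1,R_2$, a c.e.\ set smart for $\cost_{\Omega,R_i}$ need not exist below an arbitrary non-computable c.e.\ $D$ at all. Smartness pins down the ML-degree (by the corollary following \cref{thm:smarts_exist}), while $A\le_\Tur D$ forces $A\le_{\ML} D$; so your plan tacitly requires the smart ML-degree for $\cost_{\Omega,R_1}$ to lie $\le_{\ML}$-below $D$. Since $R_1,R_2$ are chosen in advance, independently of $D$, this is simply false in general.

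Concretely, take an infinite computable $R_3$ with $|R_1\cap m|\nle^+|R_3\cap m|$ (for instance any $R_3$ much sparser than your $R_1$), and let $D$ be a c.e.\ set smart for $\cost_{\Omega,R_3}$, as produced by \cref{thm:smarts_exist}. If some $A\le_\Tur D$ were smart for $\cost_{\Omega,R_1}$, then $A\le_{\ML}D$, and since $A,D$ are smart for $\cost_{\Omega,R_1},\cost_{\Omega,R_3}$ respectively, \cref{lem:dumb_and_dumber} (together with \cref{thm:Omega_R_is_dumb}) gives $\cost_{\Omega,R_1}\to\cost_{\Omega,R_3}$; by \cref{thm:criterion} this means $|R_1\cap m|\le^+|R_3\cap m|$, contradicting the choice of $R_3$. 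So the obstruction is degree-theoretic, not a matter of absorbing overshoot in $(\diamond)$: no modification of the enumeration strategy can manufacture a set of the required ML-degree below such a $D$.

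The paper's argument avoids smartness entirely. It adapts Ku\v{c}era's injury-free Friedberg--Muchnik construction: one produces $\Delta^0_2$ ML-random sets $Y,Z$ together with c.e.\ $A,B$ so that $A\le_\Tur Y$, $B\le_\Tur Z$, $A\nle_\Tur Z$, and $B\nle_\Tur Y$; then $Y,Z$ directly witness $A\mid_{\ML}B$. The reductions $A\le_\Tur Y$ and $B\le_\Tur Z$ come from the cost functions $c_Y,c_Z$ associated with $\Delta^0_2$ randoms, and the basic existence construction for a non-computable c.e.\ set obeying a given cost function \emph{is} compatible with simple permitting, yielding $A,B\le_\Tur D$. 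No control over the ML-degrees of $A,B$ is required.
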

\begin{proof} 
We extend Ku\v cera's injury-free proof~\cite{Kucera:86} of the Friedberg--Muchnik theorem, as presented in \cite[Section 4.2]{Nies:book}. The theorem states that there are Turing incomparable c.e.\ sets $A,B$. Two versions of Ku\v cera's proof are given there; the first relies on $\{0,1\}$-valued d.n.c.\ functions as in \cite[Cor.~4.2.3]{Nies:book}, the second on ML-randomness as in \cite[Cor.~4.2.5]{Nies:book}. The second version actually shows that there are ML-random $\DII$ sets $Y,Z$ such that $A\leT Y$, $B \leT Z$, $A \not \leT Z$, and $B \not \leT Y$. Therefore $A \mid_\ML B$ as witnessed by $Y, Z$. 

To ensure that $A,B \leT D$, all we need to do is modify \cite[Cor.~4.2.5]{Nies:book}: 

\begin{lemma} There is a computable function $r$ such that for each $e$, if $Y= \Phi_e^\Halt$ is total and ML-random, then $A= W_{r(e)} \lwtt Y$, $A \leT D$, and $A$ is non-computable. \end{lemma} 
To see this, we use the cost function version of Kucera's result as presented in~\cite{GreenbergNies:benign} and \cite[5.3.13]{Nies:book}. Given an ML-random $\DII$ set $Y$, one defines a cost function $c_Y$ such that if $A \models c_Y$, then $A \lwtt Y$. The cost function $c_Y$ emulates a given computable approximation of $Y$, and is therefore obtained uniformly from an $e$ such that $Y= \Phi_e^\Halt$. The construction of a non-computable c.e.\ set $A$ obeying a given cost function with the limit condition \cite[Thm.~2.7(i)]{Nies:CalculusOfCostFunctions} is compatible with simple permitting, so we can ensure that $A \leT D$. It is also uniform in the cost function (when $D$ is fixed). So we obtain the c.e.\ set $A$ uniformly in $e$, as required. 
\end{proof}

\begin{remark} The following may be relevant towards Question~\ref{qu:ML arithm} above. We distinguish $\le_\ML$ from certain variants that are clearly arithmetical. Fix a notation $\eta$ for an infinite computable ordinal. Intuitively, a $\DII$ set is $\eta$-c.a.\ if it has a computable approximation where the number of changes is bounded by counting downward in the canonical computable well-order given by $\eta$. See e.g.\ \cite[Def.\ 7.1]{GreenbergHirschfeldtNies:2012} for the formal definition of $\eta$-c.a.\ sets and more background. 

Restricting the ML-randoms in the definition~\ref{def:ML-reducibility} of $\le_\ML$ to the $\eta$-c.a.\ sets yields a reducibility strictly weaker than $\le_\ML$. For, the $\eta$-c.a.\ sets form a $\SI 3$ class, so there is a noncomputable c.e.\ set $D$ below all the $\eta$-c.a.\ ML-randoms. Now by Theorem~\ref{thm:Ott}, let $A, B \leT D$ be c.e.\ sets such that $A \mid_\ML B$. Then $A$ and $B$ are equivalent in the sense of the reducibility based on $\eta$-c.a.\ sets. 
Note that by the proof of Theorem~\ref{thm:Ott}, in fact $A$ and $B$ are incomparable for the weaker variant of ML-reducibility based on ML-random $\DII$ sets. \end{remark} 

\section{Proof of Lemma~\ref{lem:p_base_from_fragment}}
\label{sec:proof}

Recall that for $\+A\subseteq 2^\w$ and $Z\in 2^\w$, the Lebesgue (binary) lower density ${\varrho}_2(\+A|Z)$ of $\+A$ at~$Z$ is $\liminf_n \leb(\+A | Z\rest{n})$, where $\leb(\+A|\s) = \leb(\+A\cap[\s])/\leb([\s])$ is the conditional probability of~$\+A$ given $[\s]$. Notice that ${\varrho}_2(2^\w\times \+ A | X \oplus Z) = {\varrho}_2(\+A|Z)$ for any bit sequence $X$.

 A difference test is one of the form $\seq{\+U_n\cap \+P}$, where the open sets $\+U_n$ are uniformly $\Sigma^0_1$ and nested, $\+P$ is $\Pi^0_1$, and $\leb(\+U_n\cap \+P)\le 2^{-n}$. Franklin and Ng~\cite{FranklinNg} proved that an ML-random sequence $Z$ is difference random (i.e., passes all difference tests) if and only if $Z$ is Turing incomplete.
 
A bit sequence~$Z$ is a \emph{positive density point} if the lower density $\underline{\varrho_2}(\+P|Z)$ is positive for any $\Pi^0_1$ class~$\+P$ that contains~$Z$ (If $Z$ is ML-random, then it makes no difference whether one takes the binary, or the full density defined in the setting of the unit interval.)

We also require a result from~\cite{BienvenuEtAl:DenjoyDemuthDensity} due to Bienvenu, H\"olzl and two of the authors of the present paper. It implies that an ML-random is difference random if and only if it is a {positive density} point. Furthermore, the failure of these properties will be witnessed on the same~$\Pi^0_1$ classes, an observation we will use below.
\begin{fact}[\cite{BienvenuEtAl:DenjoyDemuthDensity}, Lemma 3.3] \label{lem:density_0_difference_test} Suppose that $\+Q$ is a $\Pi^0_1$-class that contains an ML-random sequence $Z$. Then $Z$ fails a difference test of the form $\seq{\+V_n\cap \+Q}$ iff $\+Q$ has lower density 0 at~$Z$.
\end{fact}
%
%

The purpose of this section is to prove \cref{lem:p_base_from_fragment}, which we recall here. 
We will first give a proof in the case that~$A$ is c.e.

\begin{namedthm}{\cref{lem:p_base_from_fragment}}
\SAVEText
\end{namedthm}
\begin{proof}[Proof when~$A$ is c.e.]
\newcounter{TEMPsection}
\newcounter{TEMPthm}
\setcounter{TEMPsection}{\value{section}}
\setcounter{TEMPthm}{\value{thm}}
\setcounter{section}{\value{SAVEsection}}
\setcounter{thm}{\value{SAVEthm}}
Fix a computable enumeration $\seq{A_s}$ of $A$. Fix a $\cost_{\Omega, R^\complement}$-test $\seq{\+U_n}$ that~$X$ fails, and fix a functional~$\Phi$ with $A = \Phi^Y$. Let $\+E$ be the error set for $\Phi$ with respect to~$A$: as before, $\+ E_s$ is the set of oracles~$Z$ such that $\Phi_s^Z$ lies to the left of~$A_s$. Let $\+Q = 2^\w \times (2^\w - \+E)$ (and $\+Q_s = 2^\w \times (2^\w - \+E_s)$).

\smallskip
We carry out a ``ravenous sets'' construction on $\+Q$; for background see \cite[Section 3.1]{SubclassesPaper}. Uniformly in $k, n \in \w$, we enumerate $\Sigma^0_1$ open sets $\+V^k_n\subset 2^\w\times 2^\w$. The \emph{goal} for $\+V^k_n\cap \+Q$ is $2^{-k}(\Omega_{n+1} - \Omega_n)$; we will ensure that no set ever exceeds its goal. In \cite{HirschfeldtNiesStephan:UsingRandomSetsAsOracles} a set playing a role similar to the one of $\+V^k_n$ was called ``hungry'' if it has not reached its goal. The sets $\+V^k_n$ are called ``ravenous'' here, rather than just ``hungry'', because we may feed them with oracle strings that later leave $\+Q$, in which case they get hungry again. 

We will also ensure that $\+V^k_n$ is disjoint from $\+V^k_m$ for $n \neq m$. The parameter~$k$ determines the goal for these ravenous sets; otherwise, the constructions for distinct~$k$ are independent. The other property that we ensure is that
\[ \+V^k_n\cap \+Q\subseteq \+U_n\times \Psi^{-1}[A\rest{n+1}]. \] 

\noindent \emph{Construction of the sets $\+V^k_n$, for parameter $k$.} At stage 0, we begin with $\+V^k_n$ empty for every $ n \in \omega$. At every stage~$s$, we call one of the sets~$\+V^k_n$ ``awake'', and the others ``asleep''. We start with~$\+V^k_0$ awake.

At stage~$s$, if $\+V^k_n$ is awake at this stage, then it has not reached its goal, i.e., $\leb(\+V^k_{n,s}\cap \+Q_s) < 2^{-k} (\Omega_{n+1}-\Omega_n)$, and so we try to feed it. We call a product $[\s]\times [\tau]$ of basic clopen sets {\em palatable} (at stage $s$) if: it is disjoint from $\+V^k_{m, s}$ for all $m$; $[\s]\subseteq \+U_{n,s}$; and $A_s\uhr n+1 \preceq \Phi_s^\tau$ (in particular, $[\s]\times [\tau]$ is covered by $\+Q_s$). Note that if $[\sigma']\times[\tau']$ is contained in a palatable set, it is itself palatable. By standard assumptions on the enumerations of $\Phi$ and $\+U$, we can effectively obtain a finite antichain of palatable sets covering all palatable sets, and so determine the total measure covered by palatable sets. If this measure is less than the appetite of $\+V^k_n$, i.e.\ less than $2^{-k} (\Omega_{n+1}-\Omega_n) - \leb(\+V^k_{n,s}\cap \+Q_s)$, we enumerate all the palatable sets into $\+V^k_{n,s+1}$ and declare that $\+V^k_n$ to be awake at stage $s+1$.

If instead the measure covered by palatable sets at stage $s$ exceeds the appetite of $\+V^k_n$, then we choose a finite anti-chain of palatable sets covering measure exactly $2^{-k} (\Omega_{n+1}-\Omega_n) - \leb(\+V^k_{n,s}\cap \+Q_s)$ in some effective fashion and enumerate this antichain into $\+V^k_{n,s+1}$. We then put~$\+V^k_n$ to sleep 
and declare $\+V^k_m$ to be awake at stage~$s+1$, where~$m$ is least such that $\+V^k_{m,s}$ has not reached \emph{half} its goal: $\leb(\+V^k_{m,s}\cap \+Q_s) < 2^{-(k+1)} (\Omega_{n+1}-\Omega_n)$. (Such~$m$ will always exist, of course, because all but finitely many $\+V^k_{m,s}$ will be empty. It is important to note, though, that as we enumerate measure into~$\+V^k_n$, measure leaves $\+Q$, and so a set~$\+V^k_n$ could be put to sleep but re-awakened later.)

\subsubsection*{Verification}

 Since~$X$ fails a $\cost_{\Omega, R^\complement}$ test, it is not 2-random. Since $X$ is ML-random relative to~$Y$, this implies that~$Y$ is Turing incomplete. So by the result of Franklin and Ng, $Y$ is difference random. Since $Y\notin \+E$, by \cref{lem:density_0_difference_test} $2^\w - \+E$ has positive density at~$Y$. Hence $\+ Q$ has positive density at $X \oplus Y$ by the fact mentioned at the beginning of this section. 
 Let $\+V^k = \bigcup_n \+V^k_n$. Since $\sum_n (\Omega_{n+1}-\Omega_n)=\Omega$, the sequence of sets $\seq{\+V^k \cap \+Q}$ is a difference test. By a second application of \cref{lem:density_0_difference_test}, 
 $X\oplus Y \not \in \bigcap_k \+V^k \cap \+Q$.
 
 Since $X\oplus Y \in \+Q$, we can fix some~$k$ with $X \oplus Y \not \in \+V^k_n$ for any $n \in \omega$. In the remainder of the proof, we omit the superscript~$k$. We first show that every set $\+ V_n$ eventually reaches half its goal.

\begin{claim} \label{clm:c.e.case:t}
For every $n$, there is a stage $t$ such that for all $s \ge t$, 
\[
\leb(\+V_{n,s} \cap \+Q_s) \ge 2^{-(k+1)}(\Omega_{n+1} - \Omega_n).
\]
\end{claim}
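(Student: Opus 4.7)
The plan is to prove the claim by strong induction on $n$, strengthening the inductive assertion so that $\+V_n$ also reaches its full goal $2^{-k}(\Omega_{n+1}-\Omega_n)$ at some finite stage and then goes to sleep. Given the inductive hypothesis for all $m<n$, I fix a stage $t$ past which no $\+V_m$ with $m<n$ is awake or below half goal; so, after $t$, every awakening event promotes the least $\+V_m$ with $m \ge n$ below half goal. Because $\+V_0,\dots,\+V_{n-1}$ have each reached full goal by the IH, the cascade of awakenings in the construction eventually awakens $\+V_n$ at some stage $s_1 \ge t$.

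The heart of the argument is to show that $\+V_n$ actually reaches its full goal. Suppose, for contradiction, that it does not; then $\+V_n$ stays awake forever from $s_1$ onwards, being greedily fed with cylinders from the leftover space $L_s := W_{n,s}\setminus\+V_{n,s}\setminus\bigcup_{m\ne n}\+V_{m,s}$, where $W_{n,s}=\+U_{n,s}\times\Phi_s^{-1}[A_s\uhr{n+1}]$ is the eventually stable food space. The amount added at stage $s$ is $\min(\leb(L_s),\,g_n-\leb(\+V_{n,s}\cap\+Q_s))$, where $g_n$ denotes the full goal; since the total amount ever added to $\+V_n$ is bounded by $g_n+\leb(\+E)<\infty$ (finite because $\leb(\+E)$ is controlled by the total cost of our $K$-trivial approximation of $A$), a standard greedy-feeding analysis (taking sub-cylinders of exact dyadic measure to hit the goal when within reach) forces $\leb(L_s)\to 0$. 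In the limit this yields $\leb(W_n\setminus \+V^k)=0$. Since $X\in \+U_n$ and $A=\Phi^Y$, the point $(X,Y)$ lies in the interior of $W_n$; fixing an open neighborhood $U\subseteq W_n$ of $(X,Y)$ gives $\leb(U\setminus\+V^k)=0$, whence $\varrho(\+V^k\,|\,(X,Y))=1$.

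The main obstacle is converting this density statement into a contradiction. The set $(\+V^k)^c$ is a $\Pi^0_1$-class containing $(X,Y)$, and $X\oplus Y$ is Martin-L\"of random, so the positive-density theorem for Martin-L\"of random sequences---a standard consequence of the density theory in \cite{BienvenuEtAl:DenjoyDemuthDensity} that also underlies Facts~\ref{lem:density_0_difference_test} and~\ref{lem:density_1_and_LR_hard}---yields $\varrho((\+V^k)^c\,|\,(X,Y))>0$, directly contradicting the density-$1$ conclusion for $\+V^k$. Therefore $\+V_n$ reaches its full goal at some stage $s_0$ and goes to sleep.

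It remains to derive the statement of the claim from the fact that $\+V_n$ has reached full goal at $s_0$. Any later dip of $\leb(\+V_{n,s}\cap\+Q_s)$ below half goal is caused by growth of $\+E\cap\+V_n$ by at least half goal, and is repaired at the very next awakening event, since by the IH $\+V_n$ is then the least set below half goal and is therefore re-selected and refilled to full goal. Because $\leb(\+E)$ is bounded, only finitely many dip-refill cycles can occur, after which $\leb(\+V_{n,s}\cap\+Q_s)$ stays at or above half goal, as required.
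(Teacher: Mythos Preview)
Your argument has a genuine gap at the final step. You invoke a ``positive-density theorem for Martin-L\"of random sequences'' to obtain $\varrho((\+V^k)^c\mid (X,Y))>0$, but being a positive density point in every $\Pi^0_1$ class is \emph{not} a property of all ML-random sequences: for ML-randoms it is equivalent to difference randomness, i.e., to Turing incompleteness. Nothing in the hypotheses forces $X\oplus Y$ to be incomplete; indeed, in the principal application of \cref{lem:p_base_from_fragment} (namely \cref{thm:Omega_R_is_dumb}) one takes $X=\Omega_{R^\complement}$ and $Y=\Omega_R$, so $X\oplus Y\equiv_\Tur\Omega$ is complete and hence not a positive density point. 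The fix is easy: from $\leb(W_n\setminus\+V^k)=0$, choose a clopen $[\sigma]\times[\tau]\subseteq W_n$ containing $(X,Y)$; then $([\sigma]\times[\tau])\cap(\+V^k)^c$ is a null $\Pi^0_1$ class containing the ML-random pair $(X,Y)$, which is already a contradiction, with no density theorem needed.

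Even patched, your route is a considerable detour. The paper argues pointwise: since $(X,Y)\notin\+V^k$, no cylinder $[\sigma]\times[\tau]$ with $\sigma\prec X$ and $\tau\prec Y$ is ever enumerated into any $\+V_m$; but $(X,Y)\in W_n\cap\+Q$, so such a cylinder is always legitimate food and is never blocked by the other $\+V_m$'s. Hence if $\+V_n$ stayed awake forever (perpetually below its goal and therefore greedily consuming all available food), it would eventually enumerate one of these cylinders, a contradiction. This bypasses your measure bookkeeping (the bound on total mass fed, the convergence $\leb(L_s)\to 0$, the passage to $\leb(W_n\setminus\+V^k)=0$) entirely; in fact, your greedy-feeding analysis, carried through carefully, yields this same pointwise conclusion (that $(X,Y)$ would be eaten) before it yields the measure-zero statement you extract from it.
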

\begin{proof}
\NumberQED{clm:c.e.case:t}
Fix $n$. There is a $\sigma \prec X$ with $[\sigma] \subseteq \+U_n$, and there is a $\tau \prec Y$ with $A\uhr n+1 \preceq \Phi^\tau$. Fix $t_0$ such that $[\sigma] \subseteq \+U_{n,t_0}$, $A_{t_0}\uhr n+1 = A\uhr n+1$ and $A\uhr n+1 \preceq \Phi^\tau_{t_0}$. As each $\+V_{m, s}$ is the union of a finite antichain and does not contain $(X,Y)$, at every $s \ge t_0$ at which $\+V_n$ is awake, there is some palatable $[\sigma'] \times [\tau']$ with $\sigma \preceq \sigma' \prec X$ and $\tau \preceq \tau' \prec Y$. We do not enumerate some neighborhood covering $[\sigma']\times[\tau']$ into $\+V_{n,s+1}$, so by construction, $\+V_n$ is asleep at stage $s+1$.

If $s_0$ is a stage when $\+V_n$ goes to sleep and $s_1 > s_0$ is a stage at which~$\+V_n$ wakes back up, then $\leb(\+Q_{s_0} - \+Q_{s_1}) > 2^{-(k+1)}(\Omega_{n+1} - \Omega_n)$. Thus $\+V_n$ can go to sleep only finitely often. It follows that for every~$n$, there are only finitely many stages at which~$\+V_n$ is awake. Let $t$ be the last stage at which any $\+V_m$ for $m\leq n$ went to sleep. Then $\leb(\+V_{n,s} \cap \+Q_s) \ge 2^{-(k+1)}(\Omega_{n+1}-\Omega_n)$ for every $s \ge t$. For otherwise, when the current $\+V_j$ goes to sleep, either $\+V_n$ or $\+V_m$ for $m<n$ would wake, contrary to the choice of~$t$.
\end{proof}

We now define a pair of computable functions $f$ and $g$ by simultaneous recursion. We begin by setting $f(-1) = -1$. Given $f(s-1)$, we define $f(s)>f(s-1)$ and $g(s)$ to be sufficiently large so that for every $n < s$,
\[
\Omega_{f(s)} - \Omega_{n} \le 2(\Omega_{g(s)} - \Omega_{n}),
\]
and for every $n < g(s)$,
\[
\leb(\+V_{n,f(s)} \cap \+Q_{f(s)}) \ge 2^{-(k+1)}(\Omega_{n+1} - \Omega_n).
\]
Note such values always exist: if $g(s)$ is such that 
$\Omega - \Omega_s \le 2(\Omega_{g(s)} - \Omega_s)$,
then the first requirement is satisfied for every $f(s)$; then given any $g(s)$, a sufficiently large choice of $f(s)$ will satisfy the second requirement. Thus we can find such a pair of values by exhaustive search, and~$f$ and~$g$ are total.

Recall the following notation from Section~\ref{sec:comparability_of_fragments}:
\[
	k_s(n) = \floor{-\log_2 (\Omega_s-\Omega_n)}.
\]
Observe that $k_{f(s)}(n) \ge k_{g(s)}(n) - 1$ for all $n<s$, and so
\[ \cost_{\Omega, R^\complement}(n,f(s)) \le 2\cdot\cost_{\Omega, R^\complement}(n,g(s)). \]
 The following claim will complete the proof that~$A$ obeys $\cost_{\Omega,R}$. 

\begin{claim} \label{clm:c.e.case:b}
The total cost $\cost_{\Omega,R} \seq{A_{f(s+1)}}$ is bounded by $2^{k+3}$.
\end{claim}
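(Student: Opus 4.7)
The plan is to pay for each enumeration at position $n_s$ by a proportional amount of mass leaving $\+Q$; since $\leb(\+Q_0) - \lim_s \leb(\+Q_s) \le \leb(\+E) \le 1$, summing these losses will bound the total $\cost_{\Omega, R}$-cost.

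I would first observe that only finitely many $s$ have $n_s \ge g(s)$, since each fixed $n$ eventually satisfies $n < g(s)$; their contribution to the total cost is a constant that can be absorbed into $2^{k+3}$. So fix a stage $s$ with $n_s < g(s)$. By the choice of $f(s)$, $\leb(\+V_{n_s, f(s)} \cap \+Q_{f(s)}) \ge 2^{-(k+1)}(\Omega_{n_s+1} - \Omega_{n_s})$. Because $A$ is c.e., when $n_s$ is enumerated the string $A\uhr{n_s+1}$ shifts lexicographically to the right, so every $\tau$ appearing as a second coordinate of a point of $\+V_{n_s, f(s)} \cap \+Q_{f(s)}$ -- one with $A_{f(s)}\uhr{n_s+1} \preceq \Phi_{f(s)}^\tau$ -- lies to the left of $A_{f(s+1)}$ and so enters $\+E_{f(s+1)} \setminus \+E_{f(s)}$, exactly as in the proof of Claim~6.1. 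Using the inclusion $\+V_{n_s,f(s)} \subseteq \+U_{n_s, f(s)} \times 2^\omega$ to bound the first-coordinate projection, the second-coordinate projection $T_s$ has measure $\leb(T_s) \ge 2^{-(k+1)}(\Omega_{n_s+1}-\Omega_{n_s})/\leb(\+U_{n_s, f(s)})$. Since the $T_s$ are disjoint subsets of $\+E$, summing yields
\[
\sum_{s \,:\, n_s < g(s)} \frac{\Omega_{n_s+1} - \Omega_{n_s}}{\leb(\+U_{n_s, f(s)})} \le 2^{k+1}.
\]

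I would then convert this to a bound on $\sum_s \cost_{\Omega, R}(n_s, f(s+1))$ by combining three ingredients: the cost-test property $\leb(\+U_{n_s, f(s)}) \le^\times \cost_{\Omega, R^\complement}(n_s, f(s))$ (after arranging the enumeration of $\seq{\+U_n}$ so that the stage-wise measure bound holds), the identity $\cost_{\Omega, R}(n,s)\cdot \cost_{\Omega, R^\complement}(n,s) \le 2(\Omega_s - \Omega_n)$, and the factor-of-two comparisons $\Omega_{f(s)} - \Omega_n \le 2(\Omega_{g(s)} - \Omega_n)$ and $\cost_{\Omega, R^\complement}(n, f(s)) \le 2\cost_{\Omega, R^\complement}(n, g(s))$ already derived from the choice of $f, g$. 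Tracking the accumulated factors of $2$ yields the explicit constant $2^{k+3}$.

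The main obstacle is the last conversion: the projection argument produces per-stage bounds involving the single-step weight $\Omega_{n_s+1} - \Omega_{n_s}$, while $\cost_{\Omega, R}(n_s, f(s+1))$ is governed by the cumulative $\Omega_{f(s+1)} - \Omega_{n_s}$, and for an individual $s$ these ratios can be arbitrarily large. Bridging them requires regrouping the stages -- stratifying by the value of $k_{f(s+1)}(n_s)$, or telescoping the cumulative $\Omega$-gaps across successive stages -- and then using the choice of $g$ together with the monotonicity of the cost functions to align the cumulative quantities with $\cost_{\Omega, R}$ uniformly in $s$.
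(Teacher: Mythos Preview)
You have correctly identified the gap in your own argument, and it is fatal as written: using only the single ravenous set $\+V_{n_s, f(s)}$ yields the one-step increment $\Omega_{n_s+1}-\Omega_{n_s}$, and there is no uniform way to convert this into the cumulative quantity $\Omega_{g(s)}-\Omega_{n_s}$ that controls $\cost_{\Omega,R}(n_s,s)$. The ratio between these can be arbitrarily large at individual stages, and the vague ``regrouping'' or ``stratifying by $k_{f(s+1)}(n_s)$'' you propose does not produce the required bound; there is no telescoping available across \emph{stages}, because different stages involve different values of $n_s$.

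The missing idea is to telescope across the \emph{ravenous sets} at a fixed stage. When $n_s$ is enumerated, not only does $\pi_2[\+V_{n_s,f(s)}]$ fall into $\+E_{f(s+1)}$, but so does $\pi_2[\+V_{m,f(s)}]$ for every $m\ge n_s$, since $A_{f(s)}\rest{m+1}$ lies to the left of $A_{f(s+1)}$. So set $\+S=\bigcup_{m\ge n_s}\+V_{m,f(s)}\cap\+Q_{f(s)}$. The sets $\+V_m$ are pairwise disjoint, and by the choice of $f(s)$ each $\+V_{m,f(s)}\cap\+Q_{f(s)}$ with $n_s\le m<g(s)$ has measure at least $2^{-(k+1)}(\Omega_{m+1}-\Omega_m)$; summing telescopes to $\leb(\+S)\ge 2^{-(k+1)}(\Omega_{g(s)}-\Omega_{n_s})$, the cumulative quantity you need. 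Crucially, the test $\seq{\+U_m}$ is nested, so $\pi_1[\+S]\subseteq\+U_{n_s,f(s)}$ still holds, and your projection inequality gives $\leb(\pi_2[\+S])\ge 2^{-(k+1)}(\Omega_{g(s)}-\Omega_{n_s})/\leb(\+U_{n_s,f(s)})$. Now the conversion via $\cost_{\Omega,R}\cdot\cost_{\Omega,R^\complement}\ge\cost_\Omega$ and the factor-of-two comparisons goes through directly, with $\pi_2[\+S]\subseteq\+E_{f(s+1)}\setminus\+E_{f(s)}$ giving disjointness across stages and the bound $2^{k+3}\leb(\+E)\le 2^{k+3}$.
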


\noindent{Proof.} Fix a stage $s$, and suppose that~$n$ be least such that $n \in A_{f(s+1)} - A_{f(s)}$. We may assume $n < s$. Then for all $m \ge n$, $\pi_2[\+V_{m, f(s)}] \subseteq \+E_{f(s+1)}$, where $\pi_2 \colon 2^\w \times 2^\w \to 2^\w$ is the projection onto the second coordinate. Let
\[\+S = \bigcup_{m \ge n} \+V_{m, f(s)} \cap \+Q_{f(s)}.\]
Note that by definition of the $\+ Q_t$
\begin{equation} \label{eq:l1} \leb\left(\+E_{f(s+1)} - \+E_{f(s)}\right) \ge \leb (\pi_2[\+S]). \end{equation}
The sets $\+V_m$ are disjoint by construction, and
\[\leb(\+V_{m, f(s)} \cap \+Q_{f(s)}) \ge 2^{-(k+1)}(\Omega_{m+1} - \Omega_m)\]
for $m < g(s)$ by choice of $f(s)$, so
\[\leb(\+S) \ge 2^{-(k+1)}(\Omega_{g(s)} - \Omega_{n}) > 2^{-(k+1)} 2^{-(k_{g(s)}(n)+1)} \ge 2^{-(k+2)}\cost_{\Omega}(n,g(s)). \]
On the other hand, $\pi_1[\+S] \subseteq \+U_{n, f(s)}$, where $\pi_1$ is projection onto the first coordinate, and $\leb (\+U_{n, f(s)}) \le \cost_{\Omega, R^\complement}(n,f(s)) \le 2\cdot \cost_{\Omega, R^\complement}(n, g(s))$. Since $\+S \subseteq \pi_1[\+S] \times \pi_2[\+S]$,
\[	 
\leb(\+S) \le \leb (\pi_1[\+S]) \cdot \leb (\pi_2[\+S]) \le 2\cdot \cost_{\Omega, R^\complement}(n,g(s)) \cdot \leb (\pi_2[\+ S]), 
\]
whence, as $\cost_\Omega \le \cost_{\Omega,R}\cdot \cost_{\Omega,R^\complement}$,
\[
	2^{-(k+3)} \cost_{\Omega, R}(n,g(s)) \le \leb (\pi_2[\+S]).
\]
Therefore, by (\ref{eq:l1}), $\leb(\+E_{f(s+1)} - \+E_{f(s)}) \ge 2^{-k-3}\cdot\cost_{\Omega, R}(n,g(s)) \geq 2^{-k-3}\cdot\cost_{\Omega, R}(n,s)$.

By definition the total cost is the sum over all stages $s$ of the costs of the least change at that stage. We conclude that $\cost_{\Omega,R}\seq{A_{f(s+1)}} \le 2^{k+3}\leb (\+E)$.
\renewcommand\qedsymbol{\ensuremath{\openbox}$_{\textup{Lem}.~\ref{lem:p_base_from_fragment} \text{ for $A$ c.e.}}$}
\renewcommand\qedsymbol{\ensuremath{\openbox}$_{\ref{clm:c.e.case:b},~\textup{Lem}.~\ref{lem:p_base_from_fragment} \text{ for $A$ c.e.}}$}
\end{proof}
\setcounter{section}{\value{TEMPsection}}
\setcounter{thm}{\value{TEMPthm}}

We lift the c.e.\ case to the general case. Thanks to \cref{thm:ML_degrees_ce_generated}, this is relatively easy, say compared to the approach taken in \cite{SubclassesPaper}.

\begin{lemma} \label{lem:c_Omega_R_downward_Turing}
For any infinite computable set~$R$, obedience to $\cost_{\Omega,R}$ is downward closed under Turing reducibility. 
\end{lemma}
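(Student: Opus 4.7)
The plan is to reduce to the c.e.\ case of \cref{lem:p_base_from_fragment} via \cref{thm:ML_degrees_ce_generated}. Suppose $A \models \cost_{\Omega,R}$ and $B \le_\Tur A$. I would first observe that $\cost_{\Omega,R} \to \cost_\Omega$, since $\limcost_{\Omega,R}(n) = 2^{-|R\cap k(n)|} \ge 2^{-k(n)} \ge^\times \limcost_\Omega(n)$; hence $A$ is $K$-trivial, and so is $B$. Next, by \cref{prop:antichain:capturing}, $\Omega_R$ is captured by a $\cost_{\Omega,R}$-test, so \cref{prop:basic fact} gives $A \le_\Tur \Omega_R$, and therefore $B \le_\Tur \Omega_R$.

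I would then invoke \cref{thm:ML_degrees_ce_generated} on $B$ to obtain a c.e.\ set $D$ with $B \le_\Tur D$ and $D \le_\ML B$. Since $B \le_\Tur \Omega_R$ and $\Omega_R$ is ML-random, the ML-reducibility yields $D \le_\Tur \Omega_R$; as $\Omega_R$ is an incomplete ML-random ($\Omega_R$ and $\Omega_{R^\complement}$ are mutually random by van Lambalgen, so neither computes $\Omega$), $D$ is a c.e.\ $K$-trivial. Applying the (already proven) c.e.\ case of \cref{lem:p_base_from_fragment} to $D$, with $X := \Omega_{R^\complement}$ and $Y := \Omega_R$, yields $D \models \cost_{\Omega,R}$.

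The remaining task is to conclude $B \models \cost_{\Omega,R}$ from $B \le_\Tur D$, with $D$ c.e.\ and $D \models \cost_{\Omega,R}$. My plan is to build a computable approximation of $B$ out of the c.e.\ approximation of $D$, using the functional $\Phi$ with $B = \Phi^D$, and to use \cref{lem:randoms_compute_modulus} to make this approximation settle fast enough: the modulus guarantees that changes in the approximation of $B$ cannot precede the convergence of a random oracle's computation of $B$, which will allow each change in $B$ to be charged against an increase of a monotone error set $\+E$ of oracles that have been seen to be wrong about $D$. The main obstacle is precisely this lifting step: the naive approximation $B_s = \Phi^{D_s}_s$ need not yield a cost bound, because a single change in $D$ at a large location $x$ can force a change in $B$ at some $y < x$ at which $\cost_{\Omega,R}$ is strictly larger. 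Overcoming this requires a careful bookkeeping argument via the error set, in the spirit of the c.e.\ case proof above, with the modulus from \cref{lem:randoms_compute_modulus} supplying the role previously played by the monotonicity of the c.e.\ enumeration of $A$.
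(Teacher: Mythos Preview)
Your detour through $\Omega_R$, \cref{thm:ML_degrees_ce_generated}, and the c.e.\ case of \cref{lem:p_base_from_fragment} is correct as far as it goes (modulo the trivial co-finite case), but it does not prove the lemma: your ``remaining task'' --- deducing $B \models \cost_{\Omega,R}$ from $B \le_\Tur D$ and $D \models \cost_{\Omega,R}$ --- is exactly the statement you set out to prove, merely with the extra hypothesis that the upper set is c.e. That hypothesis does not help. The core obstacle you correctly identify (a change in $D$ at a large location~$x$ forces a change in $B = \Phi^D$ at a small location~$y$, where $\cost_{\Omega,R}(y,s)$ can be much larger than $\cost_{\Omega,R}(x,s)$) is untouched by the c.e.\ assumption, and your proposed use of \cref{lem:randoms_compute_modulus} does not address it: that proposition bounds \emph{when} the approximation of~$B$ settles relative to a random oracle's computation, but says nothing about the ratio $\cost_{\Omega,R}(y,s)/\cost_{\Omega,R}(x,s)$, which is what you need to control. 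The error-set idea from the c.e.\ case of \cref{lem:p_base_from_fragment} is not applicable here either, since there the set whose obedience is being established is the c.e.\ set itself, not something Turing below it.

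The paper's proof is direct and avoids the entire detour. The key ingredient is a fact from \cite[Lem.~2.5]{BD:14}: if the upper set is $K$-trivial and $\vphi$ is the use function of the reduction, then $k(n) \ge^+ k(\vphi(n))$. Writing $f(k) = |R\cap k|$, this combines with the elementary observation $f(k) \ge^+ f(k+b)$ to give $\cost_{\Omega,R}(n,\cdot) \ge^\times \cost_{\Omega,R}(\vphi(n),\cdot)$, which is precisely the control on the use needed to push the approximation through~$\Phi$. This is the missing idea in your argument.
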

\begin{proof}
This is similar to \cite[Prop.~2.3]{SubclassesPaper}. For brevity, let $f(k) = |R\cap k|$. We only use the facts that~$f$ is non-decreasing, and that there is $d\in \NN$ such that $ f(k+1) \le f(k) +d $ for each~$k$. 
As in Section~\ref{sec:comparability_of_fragments} let $k(n) = \floor{-\log_2 (\Omega-\Omega_n)}$ and $k_s(n) = \floor{-\log_2 (\Omega_s-\Omega_n)}$, so that $\lim_s k_s(n)= k(n)$ in a nonincreasing fashion.

Let~$B$ be a $\Delta^0_2$ set that obeys~$\cost_{\Omega,R}$. Let $\seq{B_t}$ be a computable approximation of~$B$ witnessing that $B\models \cost_{\Omega,R}$. Let $A\leT B$, say $A = \Psi^B$ for some functional~$\Psi$. 

 Since $\cost_{\Omega,R}\to \cost_{\Omega}$, $B$ is $K$-trivial. Let~$\psi$ be the use function for the computation $\Psi^B = A$. 
 By Barmpalias and Downey~\cite[Lem.~2.5]{BD:14}, as~$B$ is $K$-trivial and~$\psi$ is $B$-computable, one has $\Omega - \Omega_n \le^\times \Omega-\Omega_{\psi(n)}$ and hence $k(n)\ge^+ k(\psi(n))$. 
 
 Hence there is~$b\in \mathbb Z$ such that $k(n)\ge b+ k(\psi(n))$ for each $n$. We define an increasing sequence of stages~$s(i)$, starting with $s(0)=0$; $s(i)$ is the least stage $s>s(i-1)$ such that $|\Psi_s^{B_s}|>i$ and for all $n\le i$, $k_{i+1}(n)\ge b+ k_{s}(\psi_s(n))$, where $\psi_s(n)$ is the use of the computation $\Psi_s^{B_s}(n)$. We then let $A_i = \Psi_{s(i)}^{B_{s(i)}}$. 
 
 We claim that the approximation $\seq{A_i}$ witnesses that~$A$ obeys $\cost_{\Omega,R}$. The reason is that if $A_{i}(n)\ne A_{i+1}(n)$ and $n\le i$, then the $A$-cost paid is $2^{-f(k_{i+1}(n))}$, whereas at some stage~$t\in (s(i),s(i+1)]$ we see a change in~$B$ below $v = \psi_{s(i)}(n)$, showing that the total cost paid by~$B$ along this interval of stages is at least $2^{-f(k_t(v))}\ge 2^{-f(k_{s(i)}(v))}$. This allows us to bound the $A$-cost by the assumed property of $f$, as $k_{i+1}(n) \ge b + k_{s(i)}(v)$.
\end{proof}

Note that in fact obedience to $\cost_{\Omega,R}$ is downward closed under ML-reducibility (by \cref{thm:Omega_R_is_feeble}), but this used \cref{lem:p_base_from_fragment}.

\begin{proof}[Proof of Lemma~\ref{lem:p_base_from_fragment} in the general case]
Let~$A$ be $K$-trivial and suppose that the hypotheses of the lemma hold. By \cref{thm:ML_degrees_ce_generated}, let $C\geT A$ be c.e.\ such that $C\equiv_\ML A$. The ML-equivalence implies that $C\leT Y$; the c.e.\ case shows that $C\models \cost_{\Omega,R}$. By \cref{lem:c_Omega_R_downward_Turing}, $A$ obeys $\cost_{\Omega,R}$ as well.
\end{proof}




%
%
%

%


\section{Fragments of~\texorpdfstring{$\Omega$}{Omega} and strong jump-traceability}
\label{sec:SJT}

A cost function~$\cost$ is \emph{benign} \cite{GreenbergNies:benign} if from a rational $\epsilon>0$, we can compute a bound on the length of any sequence $n_1 < s_1 \le n_2 < s_2 \le \cdots \le n_{\ell} < s_{\ell}$ such that $\cost(n_{i},s_{i})\ge \epsilon$ for all $i\le \ell$. For example, $\cost_{\Omega}$ is benign, with the bound being $1/\epsilon$.

By an order function we mean a computable, non-decreasing, and unbounded function. A set~$A$ is \emph{strongly jump-traceable} if for every order function~$h$, for every $\psi$ partial computable in $A$, there is an $h$-bounded c.e.\ trace for~$\psi$; that is, there is a sequence $\seq{T(n)}$ such that $|T(n)|\le h(n)$, $T(n)$ is uniformly c.e., and $\psi(n)\in T(n)$ for all~$n\in\dom\psi$. Figueira et al.\ \cite{FigueiraNiesStephan:SJT} introduced this notion, and built a non-computable c.e.\ of this kind.

Plain (rather than strong) jump traceability is the notion where one existentially quantifies over order functions $h$. For c.e.\ sets this is equivalent to superlowness. Universally quantifying over order functions indeed places a very strong restriction on the computational power of the set. For instance, the strongly jump-traceable sets form an ideal in the Turing degrees \cite{CholakDowneyGreenberg:SJT1,Diamondstone.Greenberg.Turetsky:SJT.Enumerable} which is a proper sub-ideal of the $K$-trivials \cite{DowneyGreenberg:SJT2}. A characterisation that will concern us here is that a set is strongly jump-traceable if and only if it obeys all benign cost functions \cite{GreenbergNies:benign,Diamondstone.Greenberg.Turetsky:SJT.Enumerable}. For more on strong jump-traceability, see the survey article~\cite{Greenberg.Turetsky:SJT.BSL}.

One can characterise strong jump-traceability using computability from ML-random sequences. There is more than one such characterisation. For example, a set is strongly jump-traceable if and only if it is computable from all superlow ML-random sequences~\cite{GreenbergHirschfeldtNies:2012}, also if and only if it is computable from all superhigh random sequences~\cite{GreenbergHirschfeldtNies:2012,Greenberg.Turetsky:SJT.BSL}. Alternatively, a c.e.\ set is strongly jump-traceable if and only if it is computable from a Demuth random sequence by combining~\cite{GreenbergTuretsky:Demuth} and \cite{Kucera.Nies:11}; this extends to all $K$-trivials by \cref{thm:ML_degrees_ce_generated}. As a consequence one has:

\begin{proposition} \label{prop:SJT_ML_ideal}
The strongly jump-traceable sets form an ideal in the ML-degrees. 
\end{proposition}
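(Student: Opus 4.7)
The plan is to verify the two ideal properties. Closure under join is immediate from SJT being a Turing ideal: if $A,B$ are SJT then $A\oplus B$ is SJT, and its ML-degree is the join of the ML-degrees of $A$ and $B$.

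For downward closure, suppose $A$ is SJT and $B\le_\ML A$. First, $B$ is $K$-trivial: $A$ is $K$-trivial hence computable from every non-Oberwolfach ML-random, so the same holds for $B$, characterising $K$-triviality. By \cref{thm:ML_degrees_ce_generated} applied to $B$, fix a c.e.\ $K$-trivial $D$ with $D\equiv_\ML B$ and $B\le_\Tur D$; since SJT is Turing-downward-closed, it suffices to show $D$ is SJT. By the benign cost function characterisation, fix a benign $\cost$ and show $D\models\cost$. Replacing $\cost$ by $\cost+\cost_\Omega$ (which does not change the $K$-trivials that obey it), we may assume $\cost\to\cost_\Omega$. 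By \cref{thm:smarts_exist} let $S$ be a c.e.\ smart set for $\cost$. Since $A\models\cost$, smartness of $S$ together with \cref{prop:basic fact} gives $A\le_\ML S$: any ML-random $Y\ge_\Tur S$ is captured by a $\cost$-test, hence computes $A$. Transitively, $D\le_\ML S$.

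The main obstacle is to deduce $D\models\cost$ from $D\le_\ML S$, given that both $D,S$ are c.e.\ and $S$ is smart for the benign cost function $\cost$. The converse of \cref{prop:basic fact} is not automatic, and can fail without benignity (as witnessed by \cref{thm:shift}), so benignity must be used essentially. The intended argument is a ravenous-set construction modelled on the proof in \cref{sec:proof}: one builds a computable approximation of $D$ in tandem with the enumeration of $S$, charging each change of the $D$-approximation at stage $t$ against a decrease in the Lebesgue measure of the oracles that still appear to compute $S$; this is valid since $D\le_\ML S$ ensures that such oracles also compute $D$. The benignity of $\cost$ then supplies the effective chain-length bounds needed to convert this measure-decrease budget into a finite total $\cost$-cost for the approximation, yielding $D\models\cost$ and completing the proof.
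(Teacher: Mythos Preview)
Your proposal contains a genuine gap at exactly the point you flag as ``the main obstacle.'' You reduce the downward closure to the implication
\[
D \le_{\ML} S,\ S \text{ smart for a benign } \cost,\ D \text{ c.e.}\ \Longrightarrow\ D \models \cost,
\]
but then do not prove it; you only gesture at a ravenous-set construction. The ravenous-set argument in \cref{sec:proof} is tailored to the very particular situation of \cref{lem:p_base_from_fragment}: one has a product $X\oplus Y$ where $X$ fails a $\cost_{\Omega,R^\complement}$-test, and the bound on $\leb(\+U_n)$ coming from that complementary cost function is precisely what converts the measure bookkeeping into a $\cost_{\Omega,R}$-cost bound. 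There is no analogous ``complementary'' object available for an arbitrary benign $\cost$, and ``$D\le_{\ML} S$'' gives you information about randoms computing $S$, not a measure bound of the required shape. Your sketch neither identifies what plays the role of $X$ and $\cost_{\Omega,R^\complement}$ nor explains concretely how benignity yields the finite total cost. In effect, you are asserting that obedience to a benign $\cost$ is closed downward under $\le_{\ML}$ below a smart set --- a statement adjacent to the open question the paper raises about whether $\le_{\ML}$ on $K$-trivials is governed by cost functions.

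By contrast, the paper's proof is one line: strong jump-traceability has characterisations of the form ``$A$ is SJT iff $A$ is computable from every ML-random in a fixed class $\mathcal{R}$'' (e.g., all superlow randoms, or all superhigh randoms). Any such characterisation immediately yields downward closure under $\le_{\ML}$: if $A$ is computable from every $Y\in\mathcal{R}$ and $B\le_{\ML} A$, then $B$ is also computable from every $Y\in\mathcal{R}$, hence $B$ is SJT. No cost-function machinery is needed.
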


In \cite[5.3.1]{SubclassesPaper}, it is observed that every strongly jump-traceable set is a $p$-base for all $p>0$. However, this is not a characterisation. The sets that are $p$-bases for all~$p>0$ are the $1/\w$-bases, those which are computable from each column from an infinite partition of some random sequence. Equivalently, they are computable from~$\Omega_R$ for all computable sets~$R$ such that $\liminf_n \, |R \cap n|/n$ is positive. Some such sets are not strongly jump-traceable as pointed out in \cite[5.3.1]{SubclassesPaper}. Here we see that we obtain a characterisation of strong jump-traceability if we drop the density condition. 

\begin{prop}
For any infinite computable set~$R$, $\cost_{\Omega,R}$ is benign.
\end{prop}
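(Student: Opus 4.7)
The plan is to reduce benignness of $\cost_{\Omega,R}$ to the (trivial) benignness of $\cost_\Omega$, by showing that $\cost_{\Omega,R}(n,s) \ge \epsilon$ forces $\Omega_s - \Omega_n$ to exceed a constant computable from $\epsilon$ and (an index for) $R$.

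First, given a rational $\epsilon > 0$, let $L = \lfloor \log_2 (1/\epsilon) \rfloor$. Since $R$ is computable and infinite, the quantity $|R \cap m|$ is non-decreasing in $m$ and tends to infinity, so we can effectively compute
\[ M \;=\; \max\{ m \,:\, |R \cap m| \le L\}. \]
Now suppose $\cost_{\Omega,R}(n,s) = 2^{-|R \cap k_s(n)|} \ge \epsilon$. Then $|R \cap k_s(n)| \le L$, hence $k_s(n) \le M$ by the choice of $M$. Unpacking the definition $k_s(n) = \lfloor -\log_2(\Omega_s - \Omega_n) \rfloor$, this yields $\Omega_s - \Omega_n > 2^{-(M+1)}$.

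Next, consider any sequence $n_1 < s_1 \le n_2 < s_2 \le \cdots \le n_\ell < s_\ell$ with $\cost_{\Omega,R}(n_i, s_i) \ge \epsilon$ for all $i \le \ell$. By the previous step, each interval $(\Omega_{n_i}, \Omega_{s_i}]$ has length greater than $2^{-(M+1)}$. Since the intervals $(n_i, s_i]$ in $\omega$ are pairwise disjoint (they may only share endpoints), the corresponding intervals in $[0, \Omega]$ are also pairwise disjoint, so
\[ \ell \cdot 2^{-(M+1)} \;<\; \sum_{i=1}^{\ell} (\Omega_{s_i} - \Omega_{n_i}) \;\le\; \Omega \;\le\; 1, \]
giving $\ell < 2^{M+1}$. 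Since $M$, and hence $2^{M+1}$, is computed uniformly from $\epsilon$ (and a fixed index for $R$), this is the required computable bound, witnessing that $\cost_{\Omega,R}$ is benign.

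There is no real obstacle; the only thing to verify carefully is that $|R \cap m|$ does indeed become arbitrarily large (guaranteed by $R$ being infinite), so that the bound $M$ is well-defined and computable. The benignness constant $2^{M+1}$ is exactly the benignness constant for $\cost_\Omega$ at scale $2^{-(M+1)}$, which reflects the fact that $\cost_{\Omega,R}$ is, up to thresholding, a decimation of $\cost_\Omega$ along the coarser scale determined by $R$.
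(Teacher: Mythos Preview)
Your proof is correct and follows essentially the same approach as the paper: both arguments compute, from $\epsilon$, a threshold $m$ (your $M$) such that $\cost_{\Omega,R}(n,s)\ge\epsilon$ forces $k_s(n)\le m$ and hence $\Omega_s-\Omega_n$ to exceed a fixed power of $2$, then use disjointness of the resulting intervals in $[0,\Omega]$ to bound $\ell$. The only differences are cosmetic bookkeeping (you define $L$ and $M$ explicitly, whereas the paper directly picks $m$ with $2^{-|R\cap m|}<\epsilon$).
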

\begin{proof}
Given a rational $\epsilon>0$, first, we compute an $m$ with $2^{-|R \cap m|} < \epsilon$. Let $n_1<s_1\le n_2< s_2 < \cdots \le n_\ell< s_\ell$ be a sequence such that for all $i\le \ell$, $\cost_{\Omega, R}(n_i, s_i) > \epsilon$. This means that $k_{s_i}(n_i) < m$, and so $\Omega_{s_i} - \Omega_{n_i} \ge 2^{-m}$. So
\[1 > \Omega
> \sum_{i \le \ell} \Omega_{s_i} - \Omega_{n_i}
\ge \ell\cdot 2^{-m},\]
and thus $\ell< 2^m$.
\end{proof}

\begin{prop} \label{prop:benign_implied_by_c_Omega_R}
For any benign cost function $\cost$, there is an infinite computable set~$R$ with $\cost_{\Omega,R} \to \cost$.
\end{prop}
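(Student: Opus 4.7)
The goal is to produce an infinite computable $R\subseteq\NN$ with $\cost_{\Omega,R}\to \cost$. By the characterisation of the preorder $\to$ on cost functions recalled in \cref{def:obeying_a_cost_function}, this is equivalent to $\limcost \le^\times \limcost_{\Omega,R}$. Since $\limcost_{\Omega,R}(n) = 2^{-|R\cap k(n)|}$, the problem reduces to building $R$ so that, writing $L(n) = \log_2(1/\limcost(n))$,
\[ |R\cap k(n)| \;\le\; L(n) + O(1) \qquad \text{for all but finitely many } n. \]
Intuitively, $R$ must be sparse enough (as a subset of $\NN$) that $|R\cap k(n)|$ grows no faster than the rate at which $\limcost(n)$ decays, as measured through the ``clock'' $k(n) = \lfloor -\log_2(\Omega-\Omega_n)\rfloor$ provided by $\Omega$. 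This is a kind of converse to \cref{prop:SJT_ML_ideal}'s benignity computation.

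The construction I would run is the following. Enumerate $\Omega$ and $\cost$ in parallel. For each $j\ge 1$, maintain greedily a maximal sequence $\sigma_j$ of observed ``cost events'' $n_1<s_1\le n_2<s_2\le \cdots$ with $\cost(n_i,s_i)\ge 2^{-j}$; by benignity $|\sigma_j|\le g(2^{-j})$, so $\sigma_j$ stabilises after finitely many stages. Every time a $\sigma_j$ gains a new member $(n,s)$, add one new element to $R$: take the computable upper bound $k_s(n)\ge k(n)$ on $k(n)$ obtainable from $\Omega_s$, and then place the new element at $\max(\text{current maximum of }R + 1, k_s(n))$ so that $R$ remains a strictly increasing enumeration (and therefore computable). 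The limit condition on $\cost$ guarantees that events occur at arbitrarily small levels, so $R$ is infinite.

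To verify the key inequality, suppose $n$ is such that $\limcost(n)\ge 2^{-j}$. Any element $r\in R$ with $r<k(n)$ was added in response to a greedy event $(n',s')$ with $k_{s'}(n')<k(n)$, hence $n'<n$ (as $k$ is non-decreasing) and so $\limcost(n')\ge\limcost(n)\ge 2^{-j}$; one then bounds the number of such events level by level using the benignity bound $g$. After a careful tallying, one obtains $|R\cap k(n)|\le j+O(1) = L(n)+O(1)$, as required.

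The main obstacle is this last counting argument. Benignity bounds the length of any single greedy sequence at level $2^{-j}$, but it does not bound the number of $n'$ with $\limcost(n')\ge 2^{-j}$, nor the total number of stage-$s$ pairs $(n',s)$ witnessing such cost: a flat $\limcost$ can support many events in a way that only produces a length-one greedy sequence. Consequently, one cannot simply charge every observed level-$j$ event to a slot in $R$; the additions to $R$ must be triggered only by \emph{greedy extensions} of the $\sigma_j$, and even then one must argue that placing the new element at $k_{s}(n)$ (rather than immediately after the current maximum) is what keeps $|R\cap k(n)|$ aligned with $L(n)$ rather than with the raw count of cost events. Making this accounting work—most plausibly by a level-by-level bookkeeping that charges each element of $R$ below $k(n)$ to a distinct slot in the finitely many greedy sequences $\sigma_1,\dots,\sigma_{L(n)+O(1)}$—is where the real content of the argument lies.
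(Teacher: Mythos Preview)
Your reduction to the inequality $|R\cap k(n)| \le L(n) + O(1)$ is correct, but the construction and counting you sketch do not close the gap you yourself flag, and I do not believe they can be made to work along these lines. The difficulty is structural: you are trying to choose $R$ so that a relation between two quantities you cannot compute --- $k(n)$, which depends on the true value of $\Omega$, and $L(n) = -\log_2 \limcost(n)$, which is a limit --- holds for all $n$. Benignity bounds the length of each greedy sequence, but the total number of greedy extensions across levels $1,\dots,j$ is at most $\sum_{j'\le j} g(2^{-j'})$, which is in general far larger than $j$; and events at levels $j'>j$ can also contribute elements $r<k(n)$, since nothing prevents $k_{s'}(n')<k(n)$ for such events. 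Your ``level-by-level charging'' would need to show that only $O(1)$ elements arise from each of the first $L(n)$ levels and none from higher ones, and neither half follows from benignity. (There is also a definitional issue: a single pair $(n',s')$ with large $\cost(n',s')$ extends $\sigma_{j'}$ for all sufficiently large $j'$ at once, so without further staging your construction adds infinitely many elements at a single step.)

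The paper's argument takes an entirely different route that sidesteps this obstruction. It fixes $R=\{m_0<m_1<\cdots\}$ in advance, with the $m_i$ chosen (depending only on $g$) so that $\sum_i 2^{-m_i} g(2^{-(i+1)})$ is small. Then, rather than bounding $|R\cap k(n)|$ directly, it \emph{forces $\Omega$ to grow} whenever the desired inequality $\cost(n,s+1)\le \cost_{\Omega,R}(n,s)$ threatens to fail: one builds an auxiliary left-c.e.\ real $\beta$, uses the recursion theorem (equivalently, Solovay completeness of $\Omega$) to get a constant $\delta$ with $\delta(\beta_s-\beta_n)<\Omega_s-\Omega_n$, and pumps $\beta$ by the right amount at each threatened failure, thereby increasing $\cost_{\Omega,R}$. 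Benignity is used not to count elements of $R$ but to bound the total increase in $\beta$, ensuring $\beta<1$. The key idea your plan is missing is this active manipulation of $\Omega$; without it there appears to be no computable way to tie $k(n)$ to $L(n)$.
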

\begin{proof}
Suppose $g(\epsilon)$ is a computable bound witnessing that $\cost$ is benign. We will construct a left-c.e.\ real $\beta < 1$. Since $\Omega$ is Solovay complete, by the recursion theorem, we may assume that we already know a constant $\delta > 0$ and a computable approximation to $\Omega$ with $\delta(\beta_s - \beta_n) < \Omega_s - \Omega_n$ for all~$n$ and~$s$. Choose a computable sequence $m_0 < m_1 < \cdots$ such that
\[ \sum_i \frac{2^{-m_i}\cdot g\left(2^{-(i+1)}\right)}{\delta} < 1.\]
Let $R = \{ m_0 < m_1 < \cdots\}$.

Define $\beta_0 = 0$. At stage $s+1$, if $\cost(n,s+1) \le \cost_{\Omega,R}(n,s)$ for all~$n$, then let $\beta_{s+1} = \beta_s$. Otherwise, let~$n$ be least with $\cost(n,s+1) > \cost_{\Omega,R}(n,s)$. Let $i = \floor{-\log \cost(n,s+1)}$. Define $\beta_{s+1} = \beta_s + 2^{-m_i}/\delta$. The point of this is to increase~$\Omega$: in this case, we have
\[
	\Omega_{s+1} -\Omega_s > \delta\cdot (\beta_{s+1} - \beta_s) = 2^{-m_i},
\]
and so $k_{s+1}(s)\le m_i$. In turn, this implies that $\cost_{\Omega,R}(s,s+1)\ge 2^{-i}$.

\begin{claim} \label{clm:benignR:1}
For all $n$, $\limcost_{\Omega,R}(n) \ge \limcost(n)$.
\end{claim}
\begin{proof} 
\NumberQED{clm:benignR:1}
We show that $\cost(n,s)\le \cost_{\Omega,R}(n,s)$ for and~$s$ and all~$n<s$. Suppose this holds for~$s$; we verify it for~$s+1$. Suppose there is~$\hat n \le s$, chosen least, such that $\cost(\hat n,s+1) > \cost_{\Omega,R}(\hat n,s)$ (otherwise there is nothing to do for $s+1$). For all $n<\hat n$, 
\[
\cost(n,s+1)\le \cost_{\Omega,R}(n,s)\le \cost_{\Omega,R}(n,s+1).
\] 
Let $i = \floor{-\log \cost(\hat n,s+1)}$. For all~$n\ge \hat n$ such that $n \le s$, 
\[
	\cost(n,s+1) \le \cost(\hat n, s+1) \le 2^{-i} \le \cost_{\Omega,R}(s,s+1) \le \cost_{\Omega,R}(n,s+1),
\]
as required. 
\end{proof}

The proof of the proposition will be complete once we show:

\begin{claim} \label{clm:beta_small}
$\beta < 1$.
\end{claim}

\noindent{\it Proof.} Fix $i$ and let $s_0 < s_1 < s_2 < \cdots$ be the stages $s$ with $\beta_{s+1} - \beta_s = 2^{-m_i}/\delta$. By construction, for every $n \le s_j$, $\cost_{\Omega,R}(n,s_j+1) \ge 2^{-i}$. Also by construction, there is some $n_j \le s_j$ with $2^{-(i+1)} < \cost(n_j, s_{j}+1) \le 2^{-i}$ and $\cost_{\Omega,R}(n_j,s_j) < \cost(n_j,s_j+1)$. Thus $n_0 < s_0+1 \le n_1 < s_1+1 \le \cdots$. It follows that there are at most $g\left(2^{-(i+1)}\right)$ such stages. So
\[\beta = \sum_s \beta_{s+1} - \beta_s \le \sum_i g\left(2^{-(i+1)}\right)\frac{2^{-m_i}}\delta < 1,\] by the choice of $m_i$.
\renewcommand\qedsymbol{\ensuremath{\openbox}$_{\ref{clm:beta_small},~\ref{prop:benign_implied_by_c_Omega_R}}$}	
\end{proof}

It follows that the sets which obey $\cost_{\Omega,R}$ for all computable~$R$ are precisely the strongly jump-traceable sets. \Cref{thm:Omega_R_is_feeble} implies the following, which extends the result from \cite{GreenbergHirschfeldtNies:2012} that a set is strongly jump-traceable if and only if it is computable from all $\w$-computably approximable random sequences. 

\begin{cor} \label{cor:SJT_and_R}
A ($K$-trivial) set~$A$ is strongly jump-traceable if and only if $A\leT \Omega_R$ for every infinite computable set~$R$. 
\end{cor}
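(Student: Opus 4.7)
The plan is to chain together the two preceding propositions with the characterisation of strong jump-traceability via obedience to benign cost functions (recalled at the start of the section) and \cref{thm:Omega_R_is_dumb}. The idea is that the two propositions together say that, modulo the preorder $\to$, the benign cost functions are exactly the cost functions of the form $\cost_{\Omega,R}$ for $R$ an infinite computable set, and \cref{thm:Omega_R_is_dumb} translates obedience to such cost functions, for $K$-trivial sets, into computability from $\Omega_R$.

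For the forward direction, suppose $A$ is strongly jump-traceable. Then $A$ is $K$-trivial (the SJT sets form a sub-ideal of the $K$-trivials) and $A$ obeys every benign cost function. The first proposition of this section says $\cost_{\Omega,R}$ is benign for each infinite computable $R$, so $A \models \cost_{\Omega,R}$; combined with \cref{prop:antichain:capturing} and the basic fact \cref{prop:basic fact}, this gives $A \leq_\Tur \Omega_R$.

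For the reverse direction, suppose $A \leq_\Tur \Omega_R$ for every infinite computable $R$. Choosing, say, $R = 2\NN$, the fragment $\Omega_R$ is one half of $\Omega$, hence an ML-random sequence whose Turing degree is strictly below that of $\Omega$ (by van Lambalgen, $\Omega_R$ cannot compute $\Omega_{R^\complement}$, so it is Turing-incomplete). The characterisation of $K$-triviality via computation from a Turing-incomplete random therefore gives that $A$ is $K$-trivial. Now \cref{thm:Omega_R_is_dumb} yields $A \models \cost_{\Omega,R}$ for every infinite computable $R$. By \cref{prop:benign_implied_by_c_Omega_R}, every benign cost function $\cost$ is implied (in the sense of $\to$) by some $\cost_{\Omega,R}$, so $A$ obeys $\cost$. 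Thus $A$ obeys every benign cost function and is strongly jump-traceable.

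The argument is essentially a bookkeeping exercise, with no serious obstacle: the only point requiring a little care is the $K$-triviality of $A$ in the reverse direction, which is used implicitly so that \cref{thm:Omega_R_is_dumb} applies, and which is immediate from choosing any $R$ for which $\Omega_R$ is a Turing-incomplete ML-random.
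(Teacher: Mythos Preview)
Your proof follows the paper's approach almost exactly: chain the two propositions of this section with \cref{thm:Omega_R_is_dumb} and the benign-cost-function characterisation of strong jump-traceability. There is, however, one genuine slip in the reverse direction. You write that ``the characterisation of $K$-triviality via computation from a Turing-incomplete random'' gives that $A$ is $K$-trivial. That characterisation, as recalled in the introduction, applies only to \emph{c.e.}\ sets; an arbitrary set computable from a single incomplete random need not be $K$-trivial (the incomplete random itself is a counterexample). The paper handles this differently: since $A\le_\Tur\Omega_R$ for every infinite computable $R$, in particular $A$ lies below both halves of $\Omega$, so $A$ is a $1/2$-base, and $1/2$-bases are $K$-trivial by \cref{thm:k_n-bases}. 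With this correction, your argument coincides with the paper's.
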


\noindent Note that $K$-triviality is for free here, as such a set is a $1/2$-base.

Similarly, we see that a $K$-trivial set~$A$ is strongly jump-traceable if and only if for every infinite computable~$R$, we have $A\le_\ML B_R$, where $B_R$ is ML-complete for $\cost_{\Omega,R}$. That is, the ML-ideal of strongly jump-traceable sets is the intersection of the infinitely many principal ideals given by the sets $B_R$. We conjecture that this ideal is not principal.

\end{document}